\newif\ifArXiV
\newtheorem{theorem}{Theorem}
\newenvironment{keywords}{\small \textbf{Keywords:}}{}
\newenvironment{acknowledgements}{\small \textbf{Acknowledgements:}}{}
\newtheorem{Assumption}{Assumption}
\newtheorem{Observation}{Observation}
\newcommand{\Z}{\mathbb{Z}}
\newcommand{\R}{\mathbb{R}}
\DeclareMathOperator{\st}{s.t.}
\DeclareMathOperator{\conv}{conv}
\newcommand{\red}[1]{{\color{red}#1}}
\newcommand{\BPs}{BPs\xspace}
\newcommand{\MIBLOP}{MIBLP\xspace}
\newcommand{\MIBLOPs}{MIBLPs\xspace}
\newcommand{\VFR}{VFR\xspace}
\newcommand{\HPR}{HPR\xspace}
\newcommand{\BB}{B\&B\xspace}
\newcommand{\BC}{B\&C\xspace}
\newcommand{\IBNP}{IBNP\xspace}
\newcommand{\IBNPs}{IBNPs\xspace}
\newcommand{\SOCP}{SOCP\xspace}
\def\yy{\hat{y}}
\def\xh{\hat{x}}
\newcommand{\exSymb}[1]{\hat{#1}}
\newcommand{\linq}{g}
\newcommand{\DCs}{DCs\xspace}
\newcommand{\DC}{DC\xspace}
\newcommand{\MIX}{\texttt{MIX++}\xspace}
\newcommand{\textoverline}[1]{$\overline{\mbox{#1}}$}
\newcommand{\ccrHPR}{\textoverline{\HPR}\xspace}
\begin{document}

\title{On SOCP-based disjunctive cuts for solving a class of integer bilevel nonlinear programs%
\thanks{An extended abstract of this work
containing the theoretical foundations for a single linking constraint  appeared as~\cite{Gaar-et-al:2022}. The present article additionally handles multiple linking constraints, and presents more theoretical details, new enhancements and many new computational results.}}

\ifArXiV %---------------------------------------------------------
\author[1]{Elisabeth Gaar$^{(\text{\Letter})}$\thanks{elisabeth.gaar@jku.at}}
\author[2]{Jon Lee\thanks{jonxlee@umich.edu}}
\author[3]{Ivana Ljubi\'c\thanks{ljubic@essec.edu}}
\author[1,4]{\\Markus Sinnl\thanks{markus.sinnl@jku.at}}
\author[1]{K\"ubra Tan{\i}nm{\i}\c{s}\thanks{kuebra.taninmis\_ersues@jku.at}}
\affil[1]{Institute of Production and Logistics Management, 
Johannes Kepler University Linz, Linz, Austria}
\affil[2]{University of Michigan, Ann Arbor, Michigan, USA}
\affil[3]{ESSEC Business School of Paris, France}
\affil[4]{JKU Business School, Johannes Kepler University 
Linz, Linz, Austria}			
\date{}

\else %---------------------------------------------------------
\author{Elisabeth Gaar$^{(\text{\Letter})}$ 
%\orcidID{0000-0002-1643-6066} 
\and
Jon Lee  
%\orcidID{0000-0002-8190-1091} 
\\
\and 
Ivana Ljubi\'c
%\orcidID{0000-0002-4834-6284} 
\and
Markus Sinnl 
%\orcidID{0000-0003-1439-8702} 
\\
\and  
K\"ubra Tan{\i}nm{\i}\c{s}   %\orcidID{0000-0003-1081-4182} 
}
\institute{
E. Gaar, M. Sinnl, K. Tan{\i}nm{\i}\c{s}   \at
	Institute of Production and Logistics Management, Johannes Kepler University Linz, Austria,  and
	JKU Business School, Johannes Kepler University Linz, Austria\\
	\email{\{elisabeth.gaar, markus.sinnl, kuebra.taninmis\_ersues\}@jku.at}\\
	J. Lee \at University of Michigan, Ann Arbor, Michigan, USA,
	\email{jonxlee@umich.edu}\\
	I. Ljubi\'c \at
	ESSEC Business School of Paris, France, \email{ljubic@essec.edu}
}
%\subtitle{Do you have a subtitle?\\ If so, write it here}

\titlerunning{On SOCP-based DCs for solving a class of \IBNPs}        % if too long for running head

\authorrunning{E. Gaar et al.}
% First names are abbreviated in the running head.
% If there are more than two authors, 'et al.' is used.

% \author{First Author         \and
%         Second Author %etc.
% }

% %\authorrunning{Short form of author list} % if too long for running head

% \institute{F. Author \at
%               first address \\
%               Tel.: +123-45-678910\\
%               Fax: +123-45-678910\\
%               \email{fauthor@example.com}           %  \\
% %             \emph{Present address:} of F. Author  %  if needed
%           \and
%           S. Author \at
%               second address
% }

\date{Received: date / Accepted: date}
% The correct dates will be entered by the editor

\fi %---------------------------------------------------------

%MSC classification
%90C11 - Mixed integer programming
%90C57 - Polyhedral combinatorics, branch-and-bound, branch-and-cut
%90C30 - Nonlinear programming
%65K05 - Numerical mathematical programming methods
%\subclass{90C11 \and 90C57 \and 90C30 \and 65K05}

\maketitle
%\setcounter{tocdepth}{3}
%\tableofcontents

\begin{abstract}

We study a class of integer bilevel programs with second-order cone constraints at the upper-level and a convex-quadratic objective function and linear constraints at the lower-level. We develop disjunctive cuts (\DCs) to separate bilevel-infeasible solutions using a second-order-cone-based cut-generating procedure. We propose  \DC separation strategies and consider several approaches for  removing redundant disjunctions and normalization. Using these \DCs, we propose a branch-and-cut algorithm for the problem class we study, and a cutting-plane method for the problem variant with only binary variables.

We present an extensive computational study on a diverse set of instances, including instances with binary and with integer variables, and instances with a single and with multiple linking constraints. Our computational study demonstrates that
the proposed enhancements of our solution approaches are effective for improving the performance. Moreover, both of our approaches outperform a state-of-the-art generic solver for mixed-integer bilevel linear programs that is able to solve a linearized version of our binary instances.

\ifArXiV %---------------------------------------------------------
\begin{keywords}
bilevel optimization; disjunctive cuts;  conic optimization; nonlinear optimization; branch-and-cut
\end{keywords}
\else %--------------------------------------------------------------
\keywords{bilevel optimization  \and disjunctive cuts \and conic optimization \and nonlinear optimization \and branch-and-cut.}
\fi %--------------------------------------------------------------
\end{abstract}

\section{Introduction}
%\todo[inline]{Write introduction here}

Bilevel programs (\BPs) are challenging hierarchical optimization problems in which the feasible solutions of the
upper-level problem depend on the optimal solution of the lower-level problem. \BPs allow us to model %game-theoretic considerations, notably, 
two-stage two-player Stackelberg games in which two %or more
rational players
(often called \emph{leader} and \emph{follower}) compete in a sequential fashion. 
%The leader, who acts first, tries to anticipate the optimal response of the follower by solving the lower-level optimization problem. 
\BPs have applications in many different domains such as machine learning
%\cite{agor2019feature,franceschi2018bilevel,louati2021deep}
\cite{agor2019feature}, 
logistics 
%\cite{chalmardi2019bi,fontaine2020population,zheng2018exact},
\cite{fontaine2020population},
revenue 
management 
%\cite{cote2003bilevel,labbe2016bilevel},
\cite{labbe2016bilevel}, 
the energy sector 
%\cite{grimm2021optimal,martelli2020optimization,plein2021bilevel} 
\cite{grimm2021optimal,plein2021bilevel}
and portfolio optimization~\cite{gonzalez2021global}. For more details about 
\BPs see, e.g., 
the book by Dempe and Zemkoho~\cite{dempe2020bilevel} and the  recent 
surveys~\cite{Beck-et-al:2022,kleinert2021survey,smith2020survey}.

In this work, %we propose disjunctive cuts (see, e.g., \cite{balas_book}) for solving 
we consider the following %mixed-integer
integer bilevel nonlinear programs with convex leader and follower objective functions (\IBNPs)
\begin{subequations} \label{eq:IBNP_UL}
\label{bilevel}
\begin{align}
&\min %_{\red{x,y}} 
~c'x + d'y \label{eq:objective}\\ %\ell(x,y) \\
%&\mbox{subject to:} \nonumber \\
\st~ &Mx+Ny \geq h \label{eq:coupling}\\
&\tilde{M}x+\tilde{N}y -  \tilde{h} \in \mathcal{K} \label{eq:conic}\\
%&y\in \arg \min\left\{ q(y)  : Ax+By \geq f,~ y \in \mathcal{Y}, ~y \in \mathbb Z^{n_2}  \right\}\\
& y \in \Omega(x) \\
&x \in \mathbb Z^{n_1}, \label{eq:x}
\end{align}
\end{subequations}
where $\Omega(x)$ is the set of optimal solutions of the $x$-parametrized so-called \emph{follower} (or \emph{lower-level}) \emph{problem} 
\begin{align} \label{eq:IBNP_LL}
    \min\left\{ q(y)  : Ax+By \geq f,~ y \in \mathcal{Y}, ~y \in \mathbb Z^{n_2}  \right\}.
\end{align} 
Problem~\eqref{eq:IBNP_UL} is the so-called \emph{leader} (or \emph{upper-level}) \emph{problem}.
The decision variables $x$ and $y$ are of dimension $n_1$ and $n_2$, respectively, and $n:=n_1+n_2$.
%x_i  \in \mathbb Z \quad \forall i \in I
% y_j \in \mathbb Z \quad \forall j \in J
%and $I\subseteq\{1,\ldots,n_1\}$, $J\subseteq\{1,\ldots,n_2\}$.
%
Moreover, we have
$c \in \R^{n_1}$, 
$d \in \R^{n_2}$, 
$M \in \R^{m_1 \times n_1}$, 
$N \in \R^{m_1 \times n_2}$, 
$h \in \R^{           m_1}$, 
$\tilde{M} \in \R^{\tilde{m}_1 \times n_1}$, 
$\tilde{N} \in \R^{\tilde{m}_1 \times n_2}$, 
$\tilde{h} \in \R^{\tilde{m}_1}$, 
$A \in \Z^{m_2 \times n_1}$, 
$B \in \Z^{m_2 \times n_2}$, and 
$f \in \Z^{m_2 }$. 
We denote by $A^i$, $B^i$ and $f_i$ the $i$-th row of $A$, $B$ and $i$-th entry of $f$, respectively.
We assume that each $A^i$ and $B^i$ has at least one non-zero entry.  
The constraints $Ax+By \geq f$ are referred to as \emph{linking constraints}. The constraints \eqref{eq:coupling}-\eqref{eq:conic} are called \emph{coupling constraints}, if they explicitly depend on the follower variables $y$.
Furthermore, 
$q(y)$ is a convex-quadratic function of the form
$q(y) = y'Ry + \linq'y$ with $R=V'V$ and 
$V \in \R^{n_3 \times n_2}$ with $n_3 \leq n_2$,
$\mathcal{K}$ is a
 cross-product of
second-order cones, and $\mathcal{Y}$ is a polyhedron. 
Let $F(x)$ denote the set of feasible solutions of the follower problem for a given $x$, i.e., $F(x) := \left\{ y \in \mathbb Z^{n_2}  : Ax+By \geq f,~ y \in \mathcal{Y}  \right\}$.
A solution $(x,y) \in \mathbb R^n$ is called \emph{bilevel feasible}, if it satisfies all constraints~\eqref{eq:coupling}-\eqref{eq:x}; 
%\red{belongs to the set 
%\[ \mathcal{F} = \{ (x,y) : (x,y) \in 
%is feasible for~\eqref{bilevel} 
otherwise it is called \emph{bilevel infeasible}.
The \IBNP \eqref{bilevel} is called \emph{infeasible} if there is no bilevel-feasible solution.

%\todo[inline]{The following paragraph is not true, this would require continuous leader variables!!}
%\todo{@Jon: has to think about it (explanation: integer to not have continuous variables). {\color{blue} Jon replies: sorry, I don't know what the question/todo is exactly }}
Note that even though the objective function \eqref{eq:objective} is linear, 
%only consider linear objective functions, 
we can actually consider any convex objective function which can be represented as a second-order cone constraint and % $\ell(x,y)$ 
whose optimal value is integer for $(x,y) \in \mathbb{Z}^{n}$ (e.g., a convex-quadratic polynomial with integer coefficients). To do so, we can use an epigraph reformulation to transform it into a problem of the form~\eqref{bilevel}.
%Furthermore, we assume $A$ and $B$ to be integer matrices.
%

Our work considers the \emph{optimistic} case of bilevel optimization. This means that whenever there are multiple optimal solutions for the follower problem~\eqref{eq:IBNP_LL}, the one which is best for the leader is chosen, see, e.g., \cite{loridan1996weak}. We note that already mixed-integer bilevel linear programming (\MIBLOP) is $\Sigma_2^p$-hard~\cite{Lodi-et-al:2014}.

The \emph{value function reformulation} (\VFR) of the bilevel program~\eqref{bilevel} is
\begin{subequations}
	\label{vfr}
	\begin{align}
	&\min  ~c'x + d'y \\%~\ell(x,y) \\
	%&\mbox{subject to:} \nonumber \\
	\st~ &Mx+Ny \geq h \label{ineq:linear}\\
	&\tilde{M}x+\tilde{N}y -  \tilde{h} \in \mathcal{K} \label{ineq:tilde}\\
	& Ax+By \geq f \label{ineq:linking}\\
	&q(y) \leq \Phi(x) \label{eq:inequValueFunction}\\
	& y \in \mathcal{Y} \label{ineq:yinY} \\
	%&x_i, y_j \in \mathbb Z \quad \forall i \in I, j \in J,  
	&(x,y) \in \mathbb Z^n,
	\label{ineq:xyint}
	\end{align}
\end{subequations}
where the so-called \emph{value function} $\Phi(x)$ of the \emph{follower problem}
\begin{equation}
\Phi(x) := {\rm min}\left\{
q(y)  :    y \in F(x) 
%Ax+By \geq f,~  y \in \mathcal{Y}, 
%~ y \in \mathbb Z^{n_2}
%~y_j \in \mathbb Z \quad \forall j \in J
\right\} \label{eq:follower}
\end{equation}
is typically non-convex and non-continuous. Note that {in the optimistic bilevel setting,} the \VFR is equivalent to the original bilevel program~\eqref{bilevel}.
The \emph{high-point relaxation} (\HPR) is obtained when 
dropping~\eqref{eq:inequValueFunction}, i.e., the optimality condition of $y$ 
for the follower problem, from the \VFR~\eqref{vfr}. 
We denote the continuous relaxation (i.e., replacing the integer constraint \eqref{ineq:xyint} with the corresponding variable bound constraints)
of the \HPR as \ccrHPR.

% \ccrHPR, but not feasible for the original bilevel program~\eqref{bilevel}. \todo{\MS{not do this!} Define bilevel feasible first (copy from the Fischetti paper). If it is not bilevel feasible, then it is bilevel infeasible.}

\subsection{Contribution and outline}
%\todo{update}

Since the seminal work of Balas~\cite{BalasDP}, and more intensively in the past three decades, disjunctive cuts (\DCs) have been successfully exploited for solving mixed-integer (nonlinear) programs (MI(N)LPs)~\cite{balas2018disjunctive}.  While there is a plethora of work on using \DCs for MINLPs~\cite{belotti2011disjunctive}, we are not aware of any previous applications of \DCs for solving \IBNPs.  
In this work, we demonstrate how   
\DCs can be used within a branch-and-cut (\BC) algorithm 
%based on the \emph{continuous relaxation} of the \HPR in order 
to solve \eqref{bilevel}.  
%The core of the \BB algorithm is a generalization of  the one recently used  in~\cite{fischetti2016intersection,fischetti2017new,fischetti2018use}   for solving \emph{mixed-integer bilevel linear programs} (\MIBLOPs).
This is the first time that \DCs are used to separate bilevel-infeasible solutions, using a cut-generating procedure based on second-order cone programming (\SOCP).   
Moreover, we also show that our \DCs can be used in a finitely-convergent cutting-plane procedure for 0-1 \IBNPs, where the \HPR is solved to optimality before separating bilevel-infeasible solutions.

In our preliminary study~\cite{Gaar-et-al:2022}, we described the methodological foundations of our approach, based on the assumption of having a single linking constraint in the follower problem. In this paper, we generalize these results for multiple linking constraints (leading to a cut-generating SOCP with multiple disjunctions). We additionally compare DCs derived from non-optimal  versus optimal follower solutions, and show that they are not dominating one another. Moreover, we discuss efficient methods for eliminating redundant disjunctions and normalization procedures for solving the cut-generating SOCP.

%\todo{rewrite this part at the end} 
Our computational study, which is considerably extended compared to~\cite{Gaar-et-al:2022}, is conducted on instances in which the follower minimizes a convex-quadratic objective function, subject to %a 
covering constraints linked with the leader. We consider instances with a single and with multiple linking constraints, and instances with only binary and with integer variables.
We demonstrate that the proposed enhancements of our solution algorithms improve their performance. 
Furthermore, we compare our \BC and cutting-plane approaches with a state-of-the-art solver for \MIBLOPs (which can solve our binary instances after applying linearization in a McCormick fashion) and we show that the latter one is outperformed by our new DC-based approaches. 
%\todo{Correctness to be discussed, and under which conditions}

%\subsection{Outline}

{
Our article is organized as follows. In the remainder of this section, we discuss previous and related work.
In Section~\ref{sec:disj} we describe the derivation of the \DCs. 
Section~\ref{sec:comp_methodology} contains a discussion of computational methodology which allows a successful use of our \DCs. In particular, in Section~\ref{sec:septheory} we demonstrate that our \DCs cut off bilevel-infeasible solutions. In Section~\ref{sec:chooseyhat}, we demonstrate that cuts derived from optimal follower solutions need not dominate cuts derived from non-optimal follower solutions. In Section~\ref{sec:separation}, we describe two different separation strategies, in Section~\ref{sec:removal}, we discuss several approaches to remove redundant disjunctions, and in Section~\ref{sec:normalization}, we address  normalization.
We present a \BC algorithm for solving \eqref{bilevel}  in Section~\ref{sec:bc} and a cutting-plane algorithm for 0-1 \IBNPs in Section~\ref{sec:intcut}.
In Section~\ref{sec:comp}, we present a computational study, together with some implementation details. In Section~\ref{sec:conclusions}, we conclude with an outlook to further work.}

%\subsection{Previous and related work}

\subsection{Literature overview}
%\todo{Update: Ivana}
In recent years, there has been considerable research interest on \BPs. When it comes to solution approaches, a distinction between problems with convex and non-convex follower problem 
%(e.g., due to integrality constraints like in our problem~\eqref{bilevel}) follower 
can be made. For \BPs with a convex follower problem, single-level reformulation techniques based on, e.g., Karush-Kuhn-Tucker optimality conditions or strong duality (see, e.g., \cite{calvete2020algorithms,kleinert2021outer,kleinert2020there}) can be used.
For \MIBLOPs with integrality restrictions on (some of) the follower variables, 
%Due to the difficult nature of these types of problems, all the solution approaches usually make some assumptions on the structure of the problem. The first solution algorithm for \MIBLOPs proposed in the seminal paper~\cite{moore1990mixed} considers problems that have no leader constraints and both the leader and follower variables are mixed-integer and was based on the \HPR. Most solution algorithms for \MIBLOPs, including the ones described below, follow such an approach (they may also consider the linear programming relaxation of the \HPR), and they differ in the way the remove bilevel-infeasible solutions. In~\cite{moore1990mixed} this is done with a \BB algorithm which uses some bilevel specific branching-rules. In~\cite{denegre2009branch} the authors present a branch-and-cut (\BC) framework, which uses bilevel-specific integer no-good cuts. In their problem variant, no continuous variables or leader constraints with follower variables are allowed. This framework was recently extended by including many current developments from literature and allowing for mixed-integer variables in~\cite{tahernejad2020branch}. In~\cite{xu2014exact} a \BB algorithm for \MIBLOPs where all leader variables need to be integral and bounded was proposed. The algorithm uses multiway-branching to deal with bilevel-infeasible solutions. Further algorithms based on multiway-branching are developed in~\cite{liu2021enhanced,wang2017watermelon}. 
state-of-the-art methods are usually based on \BC (see, e.g.,
%\cite{fischetti2017new,fischetti2018use,Tahernejad2020}
\cite{fischetti2017new,fischetti2018use,Tahernejad2020}). Other interesting 
concepts are based on multi-branching, 
see~\cite{wang2017watermelon,xu2014exact}. 

Considerably fewer results are available for nonlinear \BPs, and in particular with integrality restrictions in the follower problem. In~\cite{mitsos2008global}, Mitsos et al.\ propose a general approach for non-convex follower problems %(without explicitly considering integrality restrictions) 
%is proposed %and some theoretical results on attainability of optimal bilevel solutions in this settings are discussed. 
%The approach uses an iterative bounding scheme, 
which solves nonlinear optimization problems to compute upper and lower bounds in an iterative fashion. %Another stream of research in this area 
In a series of papers on the so-called \emph{branch-and-sandwich} approach,  tightened bounds on the optimal value function and on the leader objective-function value are calculated~\cite{KleniatiAdjiman2014a,KleniatiAdjiman2014b,kleniati2015generalization}. 
%Continuous but nonconvex follower problems are considered in~\cite{KleniatiAdjiman2014a,KleniatiAdjiman2014b}, and the approach is extended to the mixed-integer case in~\cite{kleniati2015generalization}. 
%The approach stated in the latter paper is applicable to problems with twice continuously differentiable functions requires bounds on all variables. In this setting, the branch-and-sandwich approach terminates in finite time. 
%Recently, novel bounding schemes for this approach have been published in Paulavicius and Adjiman (2020) and further implementation details can be found in Paulavicius et al. (2020) . Due to the general hardness of the problems under consideration, 
%The computational study in~\cite{kleniati2015generalization} deals with rather small problems with up to 12 variables and 7 constraints.
 A solution algorithm for mixed-\IBNPs proposed in~\cite{lozano2017value} by Lozano and Smith
%under the assumption that the leader variables are all integer, and also the leader-part of the linking constraints is integer is proposed. The algorithm 
approximates the value function by dynamically inserting additional variables and big-M type  constraints. 
%While the approach is proposed for \MIBNPs, it is only evaluated on a competitive scheduling problem and on the \MIBLOP instances from~\cite{xu2014exact}.
Recently, Kleinert et al.~\cite{kleinert2021outer} considered \BPs with a mixed-integer convex-quadratic leader and a continuous convex-quadratic follower problem. The method is based on outer approximation after the problem is reformulated into a single-level one using strong duality and convexification.  
In~\cite{byeon2022benders}, Byeon and Van Hentenryck develop a solution algorithm for \BPs, where the leader problem can be modeled as a mixed-integer \SOCP and the follower problem can be modeled as a \SOCP. The algorithm is based on a dedicated Benders decomposition method.
%After reformulating the problem into a nonconvex single-level one (using a  strong-duality-based transformation) and its convexification, the authors derived two outer approximation approaches, one based on a cutting-plane and the other based on a \BC method. 
In~\cite{weninger2020}, Weninger et al.\ propose a methodology that can tackle any kind of MINLP for the leader which can be handled by an off-the-shelf solver. The mixed-integer follower problem has to be convex, bounded, and satisfy Slater's condition for the continuous variables. This exact method is derived from a previous approach proposed in~\cite{YueGZY19} by Yue et al.\ for finding feasible solutions.
%For  this very challenging class of \MIBNP problems only proof-of-concept computational experiments (using very small instances)   are provided. 
%In~\cite{Cerulli2021}, the authors tackle convex nonlinear problems at the leader and continuous quadratic problems at the follower.
For a more detailed  overview of the recent literature on computational bilevel programming we refer   to~\cite{CerulliThesis,kleinert2021survey,smith2020survey}.

The only existing application of  DCs in the context of bilevel \emph{linear} programming is by Audet et al., \cite{audet} who derive DCs from LP-complementarity conditions. In~\cite{judice}, J{\'{u}}dice et al.\ exploit a similar idea for solving mathematical programs with equilibrium constraints.   

DCs are frequently used for solving MINLPs (see~\cite{balas2018disjunctive} and the many references therein, and for example~\cite{D_Ambrosio_2020,FL2022,MIQCP,MIQCPproj}). 
%In~\cite{Kilinc2014}, K{\i}l{\i}n{\c{c}}-Karzan  and Y{\i}ld{\i}z derive closed-form expressions for inequalities describing the convex hull of a two-term disjunction applied to the second-order cone. 
Concerning the existing literature that includes (computational) studies on DCs for mixed-integer SOCPs, we refer the reader to~\cite{Atamturk-Narayanan:2010,Atamturk-Narayanan:2011,Cezik-Iyengar:2005,Kilinc-Karzan-Steffy:2016,kilincc2015two,Kilinc-Karzan:2016,Lodi-et-al:2020,Modaresi:2016} and further references therein.

\section{Disjunctive cut methodology \label{sec:disj}}
The aim of this section is to derive \DCs for the bilevel program~\eqref{bilevel} with the help of SOCP; so we want to
derive \DCs that separate %integer
bilevel-infeasible solutions
from the convex hull of bilevel-feasible solutions. 
Toward this end, we assume throughout this section that 
we have a second-order conic convex set $\mathcal{P}$, such that 
%the set of feasible solutions of the \VFR is a subset of $\mathcal{P}$, 
% \sout{\blue{all} \red{bilevel-feasible points are contained in $\mathcal{P}$} and such that} \todo{Elli: ?}
$\mathcal{P}$ is a subset of the set of feasible solutions of the \ccrHPR. 
This implies that $\mathcal{P}$ fulfills 
\eqref{ineq:linear}, \eqref{ineq:tilde}, \eqref{ineq:linking} and \eqref{ineq:yinY} and potentially already some \DCs.
Moreover, we assume that $(x^*,y^*)$ is a bilevel-infeasible point in $\mathcal{P}$.
% The point $(x^*,y^*)$ is an \emph{extreme point} of $\mathcal{P}$, if it is not a convex combination of any other two points of $\mathcal{P}$.
%\todo{add $(x^*,y^*) \in \mathcal{P}$?}
%To this end, the \DC methodology requires a definition of the convex set $\mathcal{D}$, to which we will refer as \emph{disjunctive hull}, such that  $(x^*,y^*) \not \in \mathcal{D}$, and for which we will derive a hyperplane (called \emph{disjunctive cut}) separating $(x^*,y^*)$ from $\mathcal{D}$.
%\todo{@Ivana: this set $\mathcal{D}$ is exactly the convex hull of  \eqref{sec:intcut}, right? If it is, then we should explicitly mention it also afterwards. And we should somehow unify the two explanations (later we have $\mathcal{P}$.}

\subsection{Preliminaries}

%\red{\emph{Assumption on the number of linking constraints removed.}}

%For clarity of exposition in what follows, we  consider only one linking constraint of problem~\eqref{bilevel}, i.e., $m_2 = 1$ and thus $A = a'$ and $B = b'$ for some $a \in \Z^{n_1}$, $b \in \Z^{n_2}$ and $f \in \Z$. Note however that our methodology can be generalized for multiple linking constraints leading to one additional disjunction for every additional linking constraint. 

% This assumption is not needed in the integer (and not mixed-integer) case
%\begin{Assumption}
%\label{as:integerLinkingVariables}
%	Only integer variables appear in the linking constraints.
%\end{Assumption}

% This assumption is not needed we decided in a ZOOM discussion on 2021-08-11
%\begin{Assumption}
%	The leader objective function is integer for every feasible~$y$, so 
%	$q(y) \in \mathbb Z$ for every 
%	$ y \in \mathcal{Y}$ with ${y}_j \in \mathbb Z$ for all $j \in J$.
%\end{Assumption}

{Our general assumptions regarding the structure of the \IBNP are given below.}

\begin{Assumption}\label{as:bound1}
All variables are bounded in the \HPR, and %all follower variables are bounded in the follower problem, i.e. 
$\mathcal{Y}$ is bounded.
\end{Assumption}
%\red{This assumption assumes that the set of all bilevel-feasible solutions is finite, and hence, the \HPR is bounded.}
%
%Assumption~\ref{as:bound1} ensures that the \HPR is bounded. 
%and that there exists an optimal solution to the \HPR. 
%\todo{@End: what if bilevel program infeasible?}
We note that in a bilevel-context already for the linear case of \MIBLOPs,
unboundedness of the \ccrHPR does not imply anything for the original problem, 
all three options (infeasible, unbounded, and existence of an optimum) are possible. For more details see, e.g., \cite{fischetti2018use}. 
%This assumption could be weakened to only requiring the integer variables to be bounded by dealing with the unboundedness within a solution algorithm, but for simplifying the presentation we make the stronger assumption.

%We need the following assumption to make sure that there is no bad behaviour (integer x, for which there is no feasible follower solution (i.e. only fractional solutions) -> Then it is unclear, what would happen
%New insight: This is not a problem, we can just continue branching. 
% \begin{Assumption}
% For every $x$, such that there exists a $y$ with $(x,y)$ being feasible for the \ccrHPR, 
% the follower problem~\eqref{eq:follower} is feasible.
% \todo{Probably can be removed. In this form too strong ($D_1$  in Figure 1a. For any $x$ from there, the set $F(x)=\emptyset$), onls needs to hold for integer $x$. Is now stated as condition in Theorem 1.}
% \end{Assumption}

\begin{Assumption}
\label{as:relHPRsolvable}
\ccrHPR has a feasible solution satisfying its nonlinear constraint  \eqref{ineq:tilde}  strictly, and its dual has a  feasible solution. 
\end{Assumption}
Assumption~\ref{as:relHPRsolvable} ensures that we have strong duality for \ccrHPR
and its dual, and so we can solve the \ccrHPR (potentially with added cuts) to
arbitrary accuracy.

%We note that \todo{Do we want to keep this paragraph? NO} due to the fact that we only consider integer variables, a standard assumption (only integer variables appear in the linking constraints) is fulfilled automatically.  A slightly weaker version of this assumption, i.e., no continuous leader variables occurring in the linking constraints, is needed in any case of solving \MIBNPs to avoid attainability problems, see, e.g., \cite{koppe2010parametric,moore1990mixed}. 

%	\item linear objective function $\ell(x,y)$ of the leader
%	\item $\tilde{M}x+\tilde{N}y -  \tilde{h} \in \mathcal{K}$ consists of only 
%	linear constraints or is not present at all
%	\item $y \in \mathcal{Y}$ is only $0 \leq  y \leq 1$
%	\item we have an integer and not a mixed-integer problem
%\end{itemize}

%Based on a bilevel-infeasible (i.e., not feasible for~\eqref{bilevel}) 
%solution 
%$({x}^*,{y}^*)$ of the \todo{continuous relaxation of the} HPR, one can calculate the 
%best follower solution $\hat y$ for $x^*$, so 
%$$\hat y \in \arg \min \left\{
%q(y)  : a'x^*+b'y \geq f,~  y \in \mathcal{Y}, ~y_j \in \mathbb Z \quad \forall j \in J
%\right\}, $$
%for which $q(\hat y)  = \Phi(x^*)$ holds.

\subsection{Deriving disjunctive cuts}
\label{sec:DerivingCuts}

To derive \DCs, we first examine 
%\red{bilevel-free sets. A set is said to be \emph{bilevel-free} if it does not contain any bilevel-feasible solution.} 
%}
bilevel-feasible sets. 
It is easy to see,
and also follows from results of Fischetti et al.~\cite{fischetti2017new}, that 
for any $\hat y \in \mathcal{Y} \cap \mathbb{Z}^{n_2}$
the set 
$$
S(\hat y) := \{ (x,y) : Ax \geq f - B\hat{y},~ q(y) > q(\hat{y})  \}
$$
does not contain any bilevel-feasible solution, as for any $(x,y) 
\in S(\hat y)$ clearly $\hat y$ is a better follower solution for $x$ than $y$. 
Furthermore, due to the integrality of our variables 
%Assumption~\ref{as:integerLinkingVariables} 
and 
%under the integrality assumptions on 
of $A$ and $B$, the  
extended set 
\[
S^+(\hat y) := \{ (x,y) : Ax \geq f - B\hat{y}-1,~ q(y) \geq q(\hat{y})  
\}
\]
does not contain any bilevel-feasible solution in its interior, because any 
bilevel-feasible solution in the interior of $S^+(\hat y)$ 
is in $S(\hat y)$. 
Based on this observation, intersection cuts have been derived in~\cite{fischetti2017new}. However, $S^+(\hat y)$ is not convex in our case, so we turn our attention to \DCs.
For any  $\hat y \in \mathcal{Y}  \cap \mathbb{Z}^{n_2} $, any
bilevel-feasible solution is in the disjunction $\mathcal{D}_0(\hat{y}) \vee \mathcal{D}_1(\hat{y}) \vee \ldots \vee \mathcal{D}_{m_2}(\hat{y})$, where
\begin{align*}
%& & {\color{red}\mbox{\underline{dual mult.}}} \nonumber \\
%&\mathcal{D}_1 : b'y \leq f + 1 - a'\hat{x}, & {\color{red}\sigma \leq 0},
%&
\mathcal{D}_0(\hat{y}) : q(y) \leq q(\hat{y})
\quad \text{ and } \quad
\mathcal{D}_i(\hat{y}) : A^i x \leq f_i - B^i \hat{y}  - 1, \quad i=1,\ldots,m_2. %, & {\color{red}\sigma \leq 0},
%\end{align*}
%and
%\begin{align*} 
%&\mathcal{D}_2 : y'Ry + d'y \leq \hat{y}'R\hat{y} + d'\hat{y}~.
%&
\end{align*}
%Our ultimate goal in order to solve the bilevel program~\eqref{bilevel} is to 
%iteratively add \DCs to the \ccrHPR in order to get rid of bilevel 
%infeasible solutions. This is similar to the algorithm for \MIBLPs presented in~\cite{fischetti2016intersection,fischetti2017new,franceschi2018bilevel}, where \emph{intersection cuts} were used for cutting off bilevel-infeasible solutions. 
%
%We note that while theoretically, our cuts could be used within a cutting-plane algorithm to solve \eqref{bilevel}, in practice, they should be used within a branch-and-cut algorithm, where the branch-and-bound part ensures that the obtained solution is bilevel-feasible, i.e., the cut are used as \emph{valid inequalities}. Due to space reasons,  details about a branch-and-bound algorithm to solve \eqref{bilevel} will only be described in the journal version, we refer to~\cite{fischetti2017new} which discusses a branch-and-bound algorithm to solve \MIBLPs for the main ideas needed.
To find a \DC,
we want to generate valid linear inequalities
for
\begin{equation*}
\left\{(x,y)\in \mathcal{P} : \mathcal{D}_0(\hat{y})\right\}
\vee
\bigvee_{i=1}^{m_2}
\left\{(x,y)\in \mathcal{P} : \mathcal{D}_i(\hat{y})\right\},
\end{equation*}
so in other words we want to find a valid linear inequality that separates the bilevel-infeasible solution $(x^*,y^*)$ from the convex hull of the union of multiple disjunctions, namely from
\begin{equation}
%\label{eq:Dyhat}
\mathcal{D}(\hat{y},\mathcal{P}) := 
\conv \left(
\left\{(x,y)\in \mathcal{P} : \mathcal{D}_0(\hat{y})\right\}
\cup
\left(\bigcup_{i=1}^{m_2}
\left\{(x,y)\in \mathcal{P} : \mathcal{D}_i(\hat{y})\right\}
\right)\right).
\end{equation}

Toward this end, we first derive a formulation of $\mathcal{P}$.
If we have already generated
some \DCs of the form $\alpha'x +\beta'y \geq \tau$, then we group them as
$ \mathcal{A}x + \mathcal{B} y \ge \mathcal{T}$. 
We take these cuts, together with $Mx+Ny \geq h$ and $Ax+By \geq f$ and also 
$y \in \mathcal{Y}$, which can be represented as $\mathcal{C} y \ge \mathcal{U}$,
and we bundle them all together as
\begin{align}
%& & {\color{red}\mbox{\underline{dual mult.}}} \nonumber \\
&\bar{M}x + \bar{N}y \geq \bar{h}, %, & {\color{red}\bar{\pi}_k \geq 
%0},
\label{ineq:bar}
\end{align}
such that $\mathcal{P}$ is represented by \eqref{ineq:bar} and \eqref{ineq:tilde}, and where
\begin{align*}
\bar M := \begin{pmatrix}
M \\
A \\
\mathcal{A} \\
0 
\end{pmatrix}%_{\ell \times n_1}
,
\qquad
\bar N := \begin{pmatrix}
N \\
B \\
\mathcal{B} \\
\mathcal{C}
\end{pmatrix}%_{\ell \times n_2}
,
\qquad
\bar h := \begin{pmatrix}
h \\
f \\
\mathcal{T} \\
\mathcal{U}
\end{pmatrix}%_{\ell \times 1}
.
\end{align*}

The representation of $\mathcal{D}_i(\hat{y})$, $i=1,\dots,m_2$, is straightforward. 
It is convenient to write $\mathcal{D}_0(\hat{y})$ in 
SOCP-form using a standard technique.
Indeed, $\mathcal{D}_0(\hat{y})$ is equivalent to the standard 
second-order (Lorentz) cone constraint
$z^0 \geq \left\| (z^1,z^2) \right\|$ with 
\begin{align*}
%&z^0 := \frac{1-\left(d'y- \hat{y}'R\hat{y} - d'\hat{y}\right)}{2},\\
%&
z^0 := \frac{1-\left(\linq'y- q(\hat{y}) \right)}{2},%\\
%&
\qquad 
z^1 := Vy, 
%\qquad \text{ and } 
\qquad
%\\
%&z^2 := \frac{1+\left(d'y- \hat{y}'R\hat{y} - d'\hat{y}\right)}{2}.
%&
z^2: = \frac{1+\left(\linq'y- q(\hat{y})\right)}{2}.
\end{align*}
Because $z^0$, $z^1$ and $z^2$ are linear in $y$, we can as well write it in the form
\begin{align}
\label{con:newAtilde}
%& & {\color{red}\mbox{\underline{dual mult.}}} \nonumber \\
& \tilde{D}y - \tilde{c} \in \mathcal{Q}, % , & {\color{red}\rho  \in 
%\mathcal{Q^*}},
\end{align}
where $\mathcal{Q}$ denotes a standard second-order cone,  which is self dual, and  
%In particular, the constraint $\tilde{A}y - \tilde{b} \in \mathcal{Q}$ is 
%instantiated by taking
\[
\tilde{D} := \left(
\begin{array}{c}
-\frac{1}{2}\linq' \\[3pt]
V \\[3pt]
\frac{1}{2}\linq'
\end{array}
\right)
\qquad \text{ and } \qquad
\tilde{c} := \left(
\begin{array}{c}
%                 \frac{-1-\hat{y}'R\hat{y}-d'\hat{y}}{2} \\[3pt]
\frac{-1-q(\hat{y})}{2} \\[3pt]
%\vec{0} 
0
\\[3pt]
%                  \frac{-1+\hat{y}'R\hat{y}+d'\hat{y}}{2}
\frac{-1+q(\hat{y})}{2}
\end{array}
\right).
\]

We employ a vector $\sigma$ of 
 dual multipliers for the linear constraints representing $\mathcal{D}_1(\hat{y}),\ldots,\mathcal{D}_{m_2}(\hat{y})$. Moreover, we 
 employ a vector $\rho \in \mathcal{Q^*}$  of dual multipliers for 
 the constraint~\eqref{con:newAtilde}, representing~$\mathcal{D}_0(\hat{y})$. 
 Furthermore, we employ vectors
 $\bar{\pi}_i$, $i=0,\ldots,m_2$,
 of dual multipliers for the constraints \eqref{ineq:bar},
 and we employ vectors
 $\tilde{\pi}_i$, $i=0,\ldots,m_2$,
 of dual multipliers for the constraints \eqref{ineq:tilde}, both together representing $\mathcal{P}$.
%We do not have a convenient representation of $\Phi(x)$, 
%so we iteratively build up a conic continuous relaxation  $\mathcal{P}$
%(which includes 
%\eqref{ineq:linear},  \eqref{ineq:tilde}, \eqref{ineq:linking} and \eqref{ineq:yinY}
%and all ``disjunctive cuts'' derived so far)
%of the convex hull of the feasible region of the \VFR.
%In the beginning $\mathcal{P}$ represents \ccrHPR, and then we iteratively add %\DCs.
%We do this based on a sequence of bilevel-infeasible solutions $(x^*,y^*)$, which are the current optimal solutions when optimizing \eqref{eq:objective} over $\mathcal{P}$. These optimal solutions can be obtained because of Assumption~\ref{as:bound1}.
Then
every $(\alpha,\beta,\tau)$ corresponding to
a valid linear inequality $\alpha'x +\beta'y \geq \tau$
for $\mathcal{D}(\hat{y},\mathcal{P})$ corresponds to a solution of
% has the extended-variable representation \todo{@Jon: citation?
% {\color{blue} \scriptsize Jon replies: I don't know if I can find this explicitly written anywhere.
% It really is just an explicit application of convex duality,
% but then the trick (basically well-known) of combining the duals for several convex sets.
% But here is the reasoning, which maybe we should state" ``Every closed convex set C is precisely the solution set of the linear inequalities given by its support 
% function (see Section 13 of Rockafellar's book). 
% And every valid inequality for C is dominated by a linear inequality given by its support 
% function. So, when we have a finite number of closed convex sets, the valid inequalities for the 
% convex closure of these is every inequality that dominates the linear inequalities given by the
% support functions of all them.''
% @Jon: rewrite
% }
% }
\begin{subequations}
\begin{align}
%&\alpha' = \bar{\pi}_1'\bar{M} + \tilde{\pi}_1'\tilde{M},\label{eq1} \\
&\alpha' = \bar{\pi}_i'\bar{M} + \tilde{\pi}_i'\tilde{M} +\sigma_i 
A^i && \forall i =1,\ldots, m_2\label{eq1} \\
&\alpha' = \bar{\pi}_0'\bar{M} + \tilde{\pi}_0'\tilde{M} \label{eq2} \\
%&\beta' = \bar{\pi}_1'\bar{N} + \tilde{\pi}_1'\tilde{N}  +  \sigma \tilde{b}', 
%\label{eq3} \\
&\beta' = \bar{\pi}_i'\bar{N} + \tilde{\pi}_i'\tilde{N}   && \forall i =1,\ldots, m_2 \label{eq3} \\
&\beta' = \bar{\pi}_0'\bar{N} + \tilde{\pi}_0'\tilde{N}  +  \rho' \tilde{D} 
\label{eq4} \\
%&\tau \leq \bar{\pi}_1'\bar{h} + \tilde{\pi}_1'\tilde{h} + 
%\sigma(f+1-a'\hat{y}), \label{eq5} \\
&\tau \leq \bar{\pi}_i'\bar{h} + \tilde{\pi}_i'\tilde{h} + 
\sigma_i(f_i-1-B^i\hat{y}) && \forall i =1,\ldots, m_2 \label{eq5} \\
&\tau \leq \bar{\pi}_0'\bar{h} + \tilde{\pi}_0'\tilde{h} + \rho' \tilde{c} 
\label{eq6} \\
% &\bar{\pi}_1 \geq 0,~ \bar{\pi}_2 \geq 0,~
% \tilde{\pi}_1  \in \mathcal{K^*},~ \tilde{\pi}_2  \in \mathcal{K^*},~
&\sigma\leq 0,~ \rho \in \mathcal{Q^*},~\bar{\pi}_i \geq 0,~ \tilde{\pi}_i \in \mathcal{K^*} && \forall i=0,\ldots,m_2, \label{eq7}
\end{align}
\end{subequations}
 where $\mathcal{K}^*$ and $\mathcal{Q^*}$ are the dual cones of  $\mathcal{K}$ and $\mathcal{Q}$, respectively
 (see, e.g., Balas~\cite[Theorem 1.2]{balas2018disjunctive}).

To attempt to generate a valid inequality for $\mathcal{D}(\hat{y},\mathcal{P})$ that is violated by
the bilevel-infeasible solution
$(x^*,y^*)$, we solve
\begin{align*}
%&\max\ \tau - \alpha'\hat{x} +\beta'\hat{y}, \tag{CG-SOCP} \label{CG-SOCP}\\
&\max\ \tau - \alpha'x^* -\beta'y^* \tag{CG-SOCP} \label{CG-SOCP}\\
\st &~\eqref{eq1}\mbox{--}\eqref{eq7}.
\end{align*}
%which is possible because of Assumption~\ref{as:CGSOCPsolvable}.
A positive objective value for a feasible
$(\alpha,\beta,\tau)$ corresponds to
a valid linear inequality $\alpha'x +\beta'y \geq \tau$
for $\mathcal{D}(\hat{y},\mathcal{P})$
violated by
%$(\hat{x},\hat{y})$.
$(x^*,y^*)$, i.e., the inequality gives a \DC separating $(x^*,y^*)$ from  $\mathcal{D}(\hat{y},\mathcal{P})$.
Finally, we need to deal with the fact that the feasible region of \eqref{CG-SOCP}
is a cone. We will take care of this in the usual manner, by  including a normalization 
constraint; see Section~\ref{sec:normalization}.

\section{Computational methodology for our disjunctive cuts} \label{sec:comp_methodology}
%\todo{@Jon: make title nicer}
 %\section{Algorithmic Considerations}
 %\todo{update}
 In this section we discuss theory and methodology of our proposed DCs. 
 %Theorem~\ref{tm:getCut} provides a theoretical foundation for the usefulness of our DCs for designing solution algorithms for \IBNPs. Since the cuts depend on the choice 
 %of the point $\hat y$ through the set $\mathcal{D}(\hat{y},\mathcal{P})$,
 %we discuss how this choice affects the obtained \DC. 
 %We also propose several procedures for eliminating redundant disjunctions,  and discuss various normalization strategies for \eqref{CG-SOCP}.

\subsection{Separation theory \label{sec:septheory}}
%\todo{@Jon: make title nicer}
To be able to derive \DCs we make the following additional assumption.
\begin{Assumption}
\label{as:CGSOCPsolvable}
%\todo{\scriptsize @Jon: Is this assumption sufficient?
%{\color{blue}
%First, let me say that we need to add the normalization \emph{before} we introduce this assumption.
%Then 
%what we need to assume is only an interior solution 
%for the dual of \eqref{CG-SOCP}. Interior only in the sense of interior to the cone constraints of the dual.
%As for the primal, we only need ordinary feasibility, but that comes for free: 0 is always feasible to
%\eqref{CG-SOCP}
%}
%}
The dual of \eqref{CG-SOCP} has a feasible solution in its interior, 
and we have an exact solver for \eqref{CG-SOCP}.
\end{Assumption}

%\MS{We note that depending on the normalization chosen for solving \eqref{CG-SOCP}, Assumption~\ref{as:CGSOCPsolvable} may not hold. This is the case if all the disjunctions are empty. However, in this case, }

The following theorem allows us to use \DCs in our solution methods.
%, and which we will not prove for the sake of brevity.
 
% \todo{\scriptsize @Jon: Do you agree that the following theorem is correct?}
% \todo{\scriptsize @Jon: Can we guarantee that by optimizing the linear objective function over $\mathcal{P}$ we always attain the optimal solution at an extreme point? {\color{blue} Jon replies: If, by an extreme point you mean that it is in the intersection
% of $\mathcal{P}$ with a supporting hyperplane of  $\mathcal{P}$, then yes, if we assume that $\mathcal{P}$ 
% is a closed set. }
% }
 \begin{theorem}\label{tm:getCut}
%  \todo{Elli: Don't we use this theorem actually in the case where all bilevel *optimal* (and not all bilevel) points are contained in P?}
Let $\mathcal{P}$ be a second-order conic convex set, such that 
%the set of feasible solutions of the \VFR is a subset of 
%\sout{\red{\blue{all}
%\todo{Elli: we want all here, right? I: Yes} 
%bilevel-feasible points are %contained in $\mathcal{P}$,} and %such that }
$\mathcal{P}$ is a subset of the set of feasible solutions of the \ccrHPR.
 Let $(x^*,y^*)$ be a bilevel-infeasible extreme point of $\mathcal{P}$. 
 %Let $\hat y$ be a feasible solution to the follower problem for $x=x^*$ (i.e., $\hat y \in \mathcal{Y} \cap \mathbb Z^{n_2}$  and $a'x^*+b' \hat{y} \geq f$) such that $q(\hat{y}) < q(y^*)$.
 If $\hat y$ is a feasible solution to the follower problem for $x=x^*$, i.e., $\hat y \in F(x^*)$,
 %$\hat y \in \mathcal{Y} \cap \mathbb Z^{n_2}$  and $a'x^*+b' \hat{y} \geq f$) 
 such that $q(\hat{y}) < q(y^*)$,
 then there is a \DC that separates $(x^*,y^*)$ from  $\mathcal{D}(\hat{y},\mathcal{P})$ and it can be obtained by solving \eqref{CG-SOCP}.
 \end{theorem}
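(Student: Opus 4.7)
The plan is to prove the theorem in three stages: first show that $(x^*,y^*)$ fails every individual disjunct, then use the extreme-point hypothesis to lift this to the convex hull, and finally invoke the dual characterization of valid inequalities for disjunctive SOCP to certify that \eqref{CG-SOCP} returns such a cut with strictly positive objective value.

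First I would verify that $(x^*,y^*)$ lies strictly outside each piece $\{(x,y)\in\mathcal{P}:\mathcal{D}_i(\hat y)\}$ for $i=0,1,\ldots,m_2$. Since $\hat y\in F(x^*)$ we have $A^i x^* + B^i\hat y\ge f_i$ and therefore $A^ix^* \ge f_i - B^i\hat y > f_i - B^i\hat y - 1$, so $(x^*,y^*)$ violates the defining inequality of $\mathcal{D}_i(\hat y)$ strictly for every $i\ge 1$. For the conic disjunct $\mathcal{D}_0(\hat y):q(y)\le q(\hat y)$, the hypothesis $q(\hat y)<q(y^*)$ gives the violation immediately.

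Next, I would promote this to the convex hull using that $(x^*,y^*)$ is an extreme point of $\mathcal{P}$. Suppose toward a contradiction that $(x^*,y^*)\in\mathcal{D}(\hat y,\mathcal{P})$. Then there exist $\lambda_j>0$ summing to one and points $p_j \in \bigcup_{i=0}^{m_2}\{(x,y)\in\mathcal{P}:\mathcal{D}_i(\hat y)\}\subseteq \mathcal{P}$ with $(x^*,y^*)=\sum_j \lambda_j p_j$. Because $(x^*,y^*)$ is extreme in the convex set $\mathcal{P}$ and every $p_j\in\mathcal{P}$, each $p_j$ must coincide with $(x^*,y^*)$, so $(x^*,y^*)$ lies in one of the pieces — contradicting the previous step. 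Assumption~\ref{as:bound1} guarantees boundedness (and hence closedness) of $\mathcal{D}(\hat y,\mathcal{P})$, so by the standard convex separation theorem there is a hyperplane $\alpha'x+\beta'y=\tau$ satisfying $\alpha'x+\beta'y\ge\tau$ on $\mathcal{D}(\hat y,\mathcal{P})$ and $\alpha'x^*+\beta'y^*<\tau$.

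Finally, I would appeal to the disjunctive-conic analogue of Balas's representation (Theorem~1.2 in \cite{balas2018disjunctive}, combined with SOCP duality): every valid linear inequality for $\mathcal{D}(\hat y,\mathcal{P})$ is attainable as an $(\alpha,\beta,\tau)$ satisfying the system~\eqref{eq1}--\eqref{eq7}, provided strong duality holds for each SOCP piece in the disjunction. Hence the separating $(\alpha,\beta,\tau)$ from the previous paragraph extends to a feasible solution of \eqref{CG-SOCP}, whose objective value $\tau-\alpha'x^*-\beta'y^*$ is strictly positive, giving the desired DC. The main obstacle is precisely this last step: unlike the polyhedral case, where Balas's correspondence is free of regularity hypotheses, in the SOCP setting one must ensure that the dual cone representation actually captures all valid inequalities. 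This is exactly the role of Assumption~\ref{as:CGSOCPsolvable} (together with Assumption~\ref{as:relHPRsolvable} applied to $\mathcal{P}$), which supplies the interior-feasibility of the CG-SOCP dual that Slater's condition requires for strong duality to lift the separating hyperplane back to a feasible multiplier vector.
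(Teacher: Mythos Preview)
Your proof is correct and follows essentially the same route as the paper's: verify that $(x^*,y^*)$ violates every disjunct, use extremality in $\mathcal{P}$ to rule out membership in the convex hull $\mathcal{D}(\hat y,\mathcal{P})$, and then invoke Assumption~\ref{as:CGSOCPsolvable} to guarantee that \eqref{CG-SOCP} recovers a separating inequality. Your version is in fact more explicit than the paper's (you spell out why each $p_j$ must coincide with $(x^*,y^*)$ and why closedness holds for the separation step), but the underlying argument is identical.
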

 \begin{proof}
 Assume that there is no cut that separates $(x^*,y^*)$ from  $\mathcal{D}(\hat{y},\mathcal{P})$, then $(x^*,y^*)$ is in $\mathcal{D}(\hat{y},\mathcal{P})$. 
 However, due to the definition of $\hat{y}$, 
 the point $(x^*,y^*)$ does not fulfill $\mathcal{D}_i(\hat{y})$ for any $i = 0 , \dots, m_2$.
 Therefore, in order to be in $\mathcal{D}(\hat{y},\mathcal{P})$, the point $(x^*,y^*)$ must be a convex combination of some points $(x_i,y_i)$ for $i = 0, \dots, m_2$, such that each $(x_i,y_i)$ is in $\mathcal{P}\cap \mathcal{D}_i(\hat{y})$, and such that at least two coefficients of the convex combination are larger than zero. This is not possible due to the fact that $(x^*,y^*)$ is an extreme point of $\mathcal{P}$.
 Thus, there is a cut that separates $(x^*,y^*)$ from  $\mathcal{D}(\hat{y},\mathcal{P})$. By construction of \eqref{CG-SOCP} and due to   Assumption~\ref{as:CGSOCPsolvable}, we can use \eqref{CG-SOCP}  to find it.
 \ifArXiV %---------------------------------------------------------
\else %--------------------------------------------------------------
\qed
\fi %--------------------------------------------------------------
 \end{proof}
 
%Comment: In the theorem we need 
% -) $\hat y \in \mathcal{Y}$ with $\hat{y}_j \in \mathbb Z$ for all $j \in J$ to have a bilevel free set $S^+(\hat{y})$,
% -) $a' x^*+b' \hat{y} \geq f$ such that $(x^*,y^*)$ is not in $\mathcal{D}_1(\hat{y})$, and 
% -) $q(\hat{y}) < q(y^*)$ such that that $(x^*,y^*)$ is not in $\mathcal{D}_2(\hat{y})$

 Note that there are two reasons why a feasible \ccrHPR solution $(x^*,y^*)$ 
 %of a conic continuous relaxation of the \VFR (potentially with already added \DCs) that contains all constraints of \ccrHPR 
 is bilevel infeasible: it is not integer or $y^*$ is not an optimal follower solution for $x^*$.
 Thus, in the case that $(x^*,y^*)$ is integer, there is a better follower solution $\tilde{y}$ for $x^*$. Then Theorem~\ref{tm:getCut} with  
 $\hat{y} = \tilde{y}$ implies that $(x^*,y^*)$ can be separated
 from  $\mathcal{D}(\hat{y},\mathcal{P})$.
% from $\mathcal{P}$ 
% with a \DC. 
 We present solution methods based on this observation in Section~\ref{sec:solution}.
In the case that $(x^*,y^*)$ does not fulfill the integer constraints~\eqref{ineq:xyint}, we   distinguish the following situations:
\begin{itemize}
    \item  $F(x^*)\neq \emptyset$ and there is a better feasible follower solution $\tilde{y}$ for $x^*$, so one could still use a \DC to eliminate $(x^*,y^*)$ due to Theorem~\ref{tm:getCut} with $\hat{y} = \tilde{y}$. %\todo{can fractional $(x^*,y^*)$ be on he boundary of $S^+(\hat y)$, so that potentially we cannot cut it off?}
    \item $F(x^*)\neq \emptyset$ and all $\tilde{y}$ that are feasible for the follower problem $\Phi(x^*)$ have worse (or same) follower objective-function value than $y^*$, so there is no $\tilde{y}$ that we can choose as $\hat{y}$ in Theorem~\ref{tm:getCut}. 
    \item $F(x^*)=\emptyset$, i.e., the follower problem is infeasible for the given fractional point $(x^*,y^*)$. 
    \end{itemize}
    In the latter two cases, the point $(x^*,y^*)$ cannot be cut off using a DC, however we will see below that such points can be discarded using standard integer-programming techniques.  
    Hence, this potential failure to separate a $(x^*,y^*)$ not fulfilling the integer constraints does not affect our solution algorithms.

%  \begin{subequations}
% 	\label{vfr}
% 	\begin{align}
% 	&\min  ~c'x + d'y \\%~\ell(x,y) \\
% 	%&\mbox{subject to:} \nonumber \\
% 	\st~ &Mx+Ny \geq h \label{ineq:linear}\\
% 	&\tilde{M}x+\tilde{N}y -  \tilde{h} \in \mathcal{K} \label{ineq:tilde}\\
% 	& Ax+By \geq f \label{ineq:linking}\\
% 	&q(y) \leq \Phi(x) \label{eq:inequValueFunction}\\
% 	& y \in \mathcal{Y} \label{ineq:yinY} \\
% 	%&x_i, y_j \in \mathbb Z \quad \forall i \in I, j \in J,  
% 	&(x,y) \in \mathbb Z^n,
% 	\label{ineq:xyint}
% 	\end{align}
% \end{subequations}
% where the so-called \emp

\subsection{Choosing the point \texorpdfstring{$\hat y$}{y hat} to separate \label{sec:chooseyhat}}

For a given \DC $\alpha'x +\beta' y \geq \tau$, we say that the \DC is \emph{dominated} if there exists another \DC $\bar \alpha'x +\bar \beta' y \geq \bar \tau$ such that 
$\{ (x,y) \in \mathcal P: \bar \alpha'x +\bar \beta' y \geq \bar \tau\} \subset \{ (x,y) \in \mathcal P: \alpha'x + \beta' y  \geq \tau\}$, otherwise the \DC is called  \emph{non-dominated}. 
%\todo{too strong, better to focus on pairwise domination \MS{this next definition is pairwise domination}} 
If 
$ \{ (x,y) \in \mathcal P: \alpha'x + \beta' y \geq \tau\} \subset \{ (x,y) \in \mathcal P: \bar \alpha'x +\bar \beta' y \geq \bar \tau\}$, we say that the \DC $\alpha'x +\beta' y \geq \tau$
is \emph{dominating} the \DC $\bar \alpha'x +\bar \beta' y \geq \bar \tau$, otherwise the \DC $\alpha'x +\beta' y \geq \tau$ is \emph{not dominating} the \DC $\bar \alpha'x +\bar \beta' y \geq \bar \tau$. 
The following result establishes that for a given $x^*$, a \DC derived from a feasible but non-optimal follower solution does not have to be dominated by a \DC derived from an optimal follower solution. 

%This result justifies our choice of exploring the option \texttt{G} (greedy separation of DCs) in Algorithm~\ref{alg:separation}.

 \begin{theorem} \label{th:nondominance}
%Let $\hat y, \hat y_1 \in \mathcal{Y}$ be two points from the follower  polyhedron, such that there exists $x^* \in \red{\mathcal{X}}$\footnote{define $\mathcal{X}$ or find another workaround} 
%\red{
%Let $F(x)$ denote the set of feasible solutions of the follower problem for a given $x$, i.e., $F(x) = \left\{ y \in \mathbb Z^{n_2}  : Ax+By \geq f,~ y \in \mathcal{Y}  \right\}$.} 
Let $(x^*,y^*)\in \mathcal{P}$ be a bilevel-infeasible solution such that $\Omega(x^*) \neq \emptyset$
 and such that $F(x^*) \setminus \Omega(x^*) \neq \emptyset $. 
%and let $(x^*,\hat y_1), (x^*,\hat y_2)$ such that 
%\todo{why do we need $(x^*,\hat y_1) \in \mathcal P, (x^*,\hat y_2) \in \mathcal P$?} \sloppy
Let $\hat y_1 \in \Omega(x^*)$ and $\hat y_2 \in 
F(x^*) \setminus \Omega(x^*)$. Then, there exist instances where the two \DCs, one derived from $\mathcal{D}(\hat{y}_1,\mathcal{P})$ 
and the other derived from $\mathcal{D}(\hat{y}_2,\mathcal{P})$, do not dominate one another.  
% \todo[inline]{Actually, this theorem is very sloppy written: First we write about $\mathcal{P}$ (which means that we have a concrete instance), and then we say something about existence of an instance. To be mathematically correct, we would have to write something like
%"A \DC derived from an optimal solution to the follower problem does not necessarily dominate all \DCs derived from feasible, but not optimal follower solutions."}
 \end{theorem}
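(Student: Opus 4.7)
The claim is existential, so my plan is to prove it by constructing a concrete small instance. I would build a minimal example with one leader variable $x \in \mathbb{Z}$, a couple of follower variables $y \in \mathbb{Z}^{n_2}$, a linear follower objective $q(y) = g'y$, and one or two linking constraints, together with simple bound constraints so that $\mathcal{P}$ is just the continuous relaxation of the \HPR (no previously generated \DCs). I then pick $\hat y_1 \in \Omega(x^*)$ and $\hat y_2 \in F(x^*)\setminus\Omega(x^*)$ with $q(\hat y_1) < q(\hat y_2) \leq q(y^*)$, and select a specific bilevel-infeasible $(x^*, y^*) \in \mathcal{P}$.

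The key structural observation driving the construction is that the two disjunctions differ in two opposite directions at once. The $\mathcal{D}_0$-branch uses $q(y) \leq q(\hat y)$, so it is \emph{tighter} in the $y$-direction for $\hat y_1$ than for $\hat y_2$; on the other hand, each $\mathcal{D}_i$-branch uses $A^i x \leq f_i - B^i \hat y - 1$, and choosing the data so that $B^i \hat y_2 > B^i \hat y_1$ makes the $\mathcal{D}_i(\hat y_2)$-branches \emph{tighter} in the $x$-direction. Thus neither $\mathcal{D}(\hat y_1,\mathcal{P})$ nor $\mathcal{D}(\hat y_2,\mathcal{P})$ contains the other in general, and I would arrange the numerical data precisely so that this asymmetry holds.

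With such an instance fixed, I would compute the two \DCs explicitly, either by solving \eqref{CG-SOCP} for each $\hat y_j$ or, since the convex hull $\mathcal{D}(\hat y, \mathcal{P})$ in this small example is a polytope with a handful of facets, by enumerating its facet-defining inequalities and choosing the most violated one at $(x^*,y^*)$. Call the resulting cuts $C_1$ (from $\hat y_1$) and $C_2$ (from $\hat y_2$). Non-dominance is then certified by exhibiting two witness points: $(\bar x, \bar y) \in \mathcal{P}$ that satisfies $C_1$ but violates $C_2$, and $(\bar{\bar x}, \bar{\bar y}) \in \mathcal{P}$ that satisfies $C_2$ but violates $C_1$. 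The first witness is naturally chosen to lie in $\mathcal{P}\cap \mathcal{D}_0(\hat y_1)$ (so that $q(\bar y) \le q(\hat y_1)$, preventing it from ever being cut by any valid inequality for $\mathcal{D}(\hat y_1,\mathcal{P})$) while violating some $\mathcal{D}_i(\hat y_2)$-branch; the second is constructed symmetrically.

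The main obstacle, and the only part requiring care, is the explicit computation: one must pick the numerical data so that both \eqref{CG-SOCP} instances have strictly positive optima (ruling out trivially coincident cuts) and so that the facets selected at $(x^*,y^*)$ really do differ in orientation. I expect a $2$-dimensional follower example with two linking constraints to suffice; the verification then reduces to substituting the two witness points into the two closed-form cut inequalities and checking the inequality signs, which is elementary arithmetic once the instance is fixed.
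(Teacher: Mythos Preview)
Your proposal is correct and follows essentially the same approach as the paper: both prove the existential claim by exhibiting a concrete small instance, computing the two \DCs explicitly, and checking that neither dominates the other. The paper uses a one-variable adaptation of the Moore--Bard example with quadratic follower objective $q(y)=y^2$ and four linking constraints (for $x^*=2$, $\hat y_1=2$, $\hat y_2=3$), whereas you sketch a variant with a linear $q$ and fewer constraints; your structural observation---that $\mathcal{D}_0(\hat y)$ tightens with smaller $q(\hat y)$ while the $\mathcal{D}_i(\hat y)$ branches tighten in the opposite direction when $B^i\hat y$ grows---is exactly the mechanism the paper's instance exploits.
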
 
 \begin{proof}
 To prove this result, we consider an adaptation of the famous example from~\cite{moore1990mixed}, namely
\begin{align*}
  \min_{x\in \Z} \{ x-y : y \in \Omega(x) \}, %\label{eq:objMB}
  \end{align*}
  where $\Omega(x)$ is the set of optimal solutions of the problem  
  \begin{subequations}
\begin{align} \min_{y\in \Z}  \{  y^2  :~ 25x -20y &\ge -30 \label{eq:gMB1}\\
     -x -2 y &\ge -10 \label{eq:gMB2}\\
      -2 x +  y &\ge -4 \label{eq:gMB3}\\
      2 x + 10y &\ge 15 \ \},\label{eq:gMB4}
%%  &\}  
\end{align} 
\end{subequations} 
and $\mathcal{P}$ is the set of feasible solutions to the linear constraints \eqref{eq:gMB1} - \eqref{eq:gMB4}.

For $x^*=2$, we have $\hat y_1 = 2 \in \Omega(x^*)$ and $\hat y_2=3 \in F(x^*) \setminus \Omega(x^*)$. The disjunctions associated with $\hat y_1 = 2$ are
\begin{align*}
&\mathcal{D}_0(\hat{y}_1): y^2 \le 4,  
&&\mathcal{D}_1(\hat{y_1}): x \le 9/25, 
&&\mathcal{D}_2(\hat{y_1}): x \ge 7,  \\
& &&\mathcal{D}_3(\hat{y_1}): x \ge 7/2,
&&\mathcal{D}_4(\hat{y_1}): x \le -3.
\end{align*}
Note that both $\mathcal P \cap \mathcal{D}_2(\hat{y_1})$ and
$\mathcal P \cap \mathcal{D}_4(\hat{y_1})$ are empty.
When \eqref{CG-SOCP} is solved using the cut-coefficient normalization with $1$-norm (see Section~\ref{sec:normalization} for more details on normalization) for the solution $x^*=2$ and $y^*=4$, the \DC obtained is $-1.25x+3.1y\leq 5.7$.

Similarly, the disjunctions associated with $\hat y_2 = 3$ are
\begin{align*}
&\mathcal{D}_0(\hat{y_2}): y^2 \le 9,  
&&\mathcal{D}_1(\hat{y_2}): x \le 29/25, 
&&\mathcal{D}_2(\hat{y_2}): x \ge 5, \\
& &&\mathcal{D}_3(\hat{y_2}): x \ge 4, 
&&\mathcal{D}_4(\hat{y_2}): x \le -8.
\end{align*}
Note that $\mathcal P \cap \mathcal{D}_i(\hat{y_2})$ is empty for all $i\in\{2,3,4\}$.
The corresponding \DC obtained by using cut-coefficient normalization with $1$-norm and the solution $x^*=2$ and $y^*=4$ is $y\leq 3$. Figure~\ref{fig:nondominance} illustrates the two cuts, neither of which dominates the other.
\ifArXiV %---------------------------------------------------------
\else %--------------------------------------------------------------
\qed
\fi %--------------------------------------------------------------
\end{proof}

\begin{figure}[tbh]
    \centering
    \begin{subfigure}{0.48\textwidth}
    \includegraphics[width=1.0\textwidth]{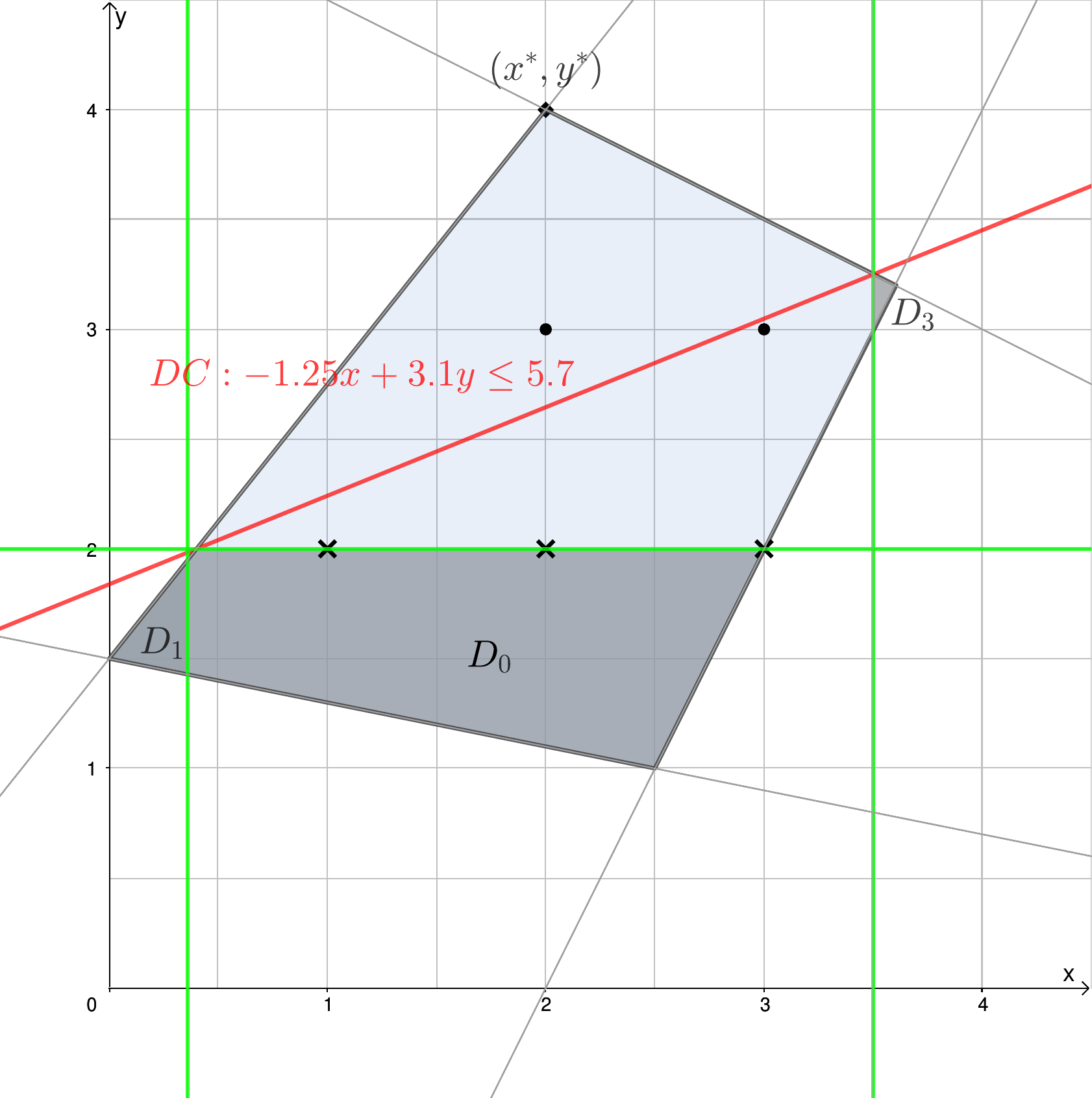}
    \caption{A \DC derived from an optimal follower solution $\hat y_1=2 \in \Omega(x^*)$.\newline\label{fig:nd1}}
    \end{subfigure}\quad
        \begin{subfigure}{0.48\textwidth}
     \includegraphics[width=1.0\textwidth]{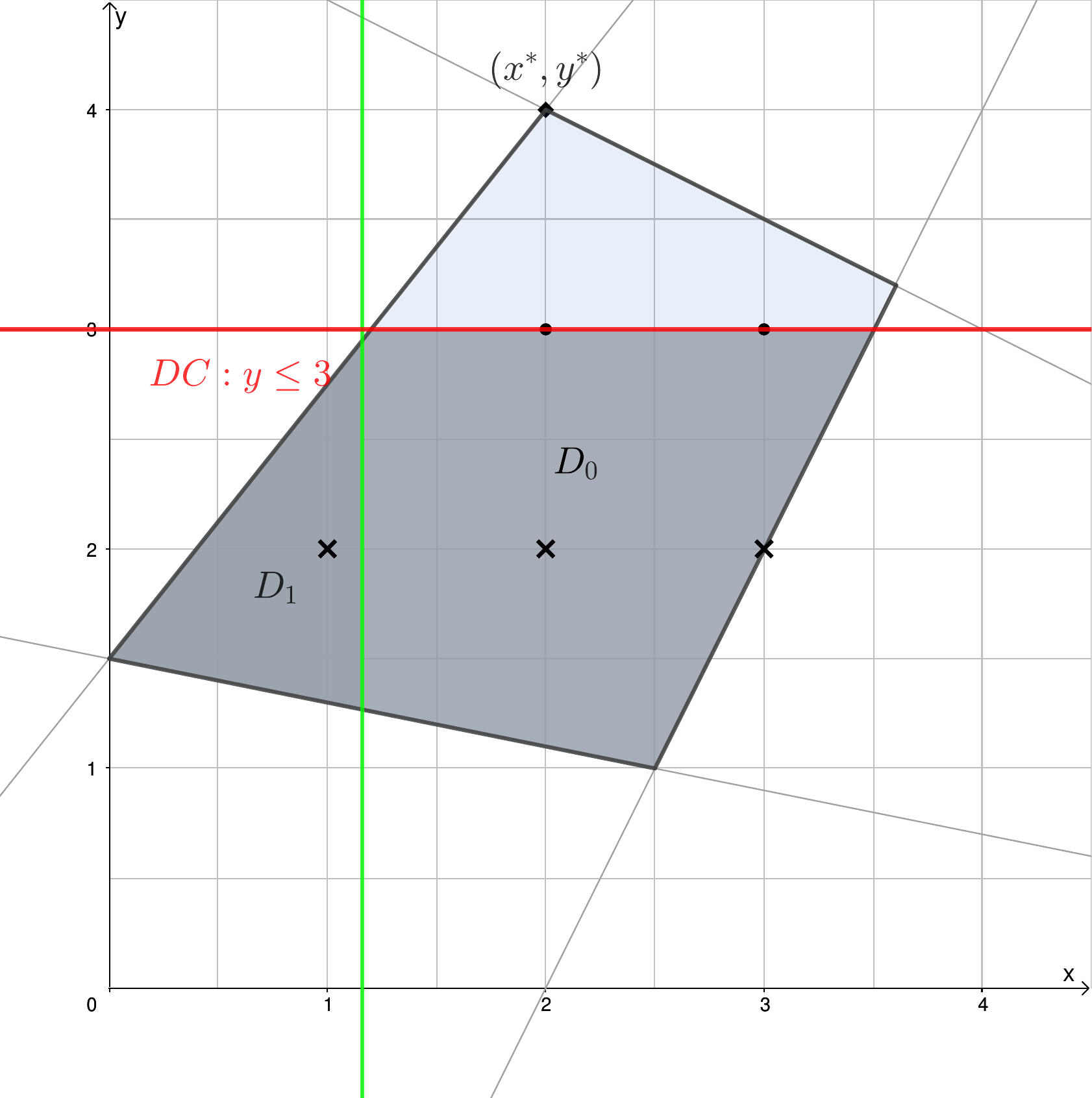}
     \caption{A \DC derived from a feasible but non-optimal follower solution $\hat y_2=3 \in F(x^*)\setminus \Omega(x^*)$. \label{fig:nd2}}
        \end{subfigure}
    \caption{An example illustrating two \DCs which do not dominate one another.}
    \label{fig:nondominance}
\end{figure}
Theorem~\ref{th:nondominance} indicates that multiple \DCs which do not dominate one another could be derived when separating a bilevel-infeasible point $(x^*,y^*)$. Moreover, it also means that we do not need to solve the follower problem to optimality in order to generate a (potentially) non-dominated \DC. This is exploited in one of the separation procedures described in the next section.

%Indeed, its is worth investigating the impact of multiple separation strategies of disjunctive cuts on the quality of the bounds at the root node of the branching tree. We therefore also consider a third option, \texttt{exhaustive search}, in which we enumerate all possible feasible follower solutions for the given choice of $x^*$, and insert the corresponding violated cuts derived from them. Clearly, this is a computationally demanding approach, in which potentially an exponential number of cuts can be generated... TO BE COMPLETED, AND MAYBE WE MOVE IT IN THE COMPUTATIONAL PART  

\subsection{Separation procedures}
\label{sec:separation}
%\todo{should be in line with separation implementation details}
%Note that \eqref{CG-SOCP} gives a valid (but not necessarily violated) cut for every potential $\hat{y}$, so 
%for any $\hat y \in \mathcal{Y} \cap \mathbb{Z}^{n_2}$. We will now discuss potential choices of $\hat y$.

We now turn our attention to describing how to computationally separate our \DCs for a solution $(x^*,y^*) \in \mathcal{P}$.
%to \ccrHPR, potentially with already added \DCs. 
Note that we do not necessarily need the optimal solution of the follower problem~\eqref{eq:follower} for $x=x^*$ to be able to cut off a bilevel-infeasible solution $(x^*,y^*)$, as any $\hat y$ that is feasible for the follower problem  with $q(\hat y)< q(y^*)$ gives a violated \DC as described in Theorem~\ref{tm:getCut}. Motivated by the result of Theorem~\ref{th:nondominance}, we implement two different strategies for separation which are described in Algorithm~\ref{alg:separation}. 

In the first one, denoted as \texttt{O}, we solve the follower problem to optimality, and use the optimal $\hat y$ in \eqref{CG-SOCP}. In the second strategy, denoted as \texttt{G}, for each feasible integer follower solution $\hat y$ with a better objective value than $q(y^*)$ obtained during solving the follower problem, we try 
to solve \eqref{CG-SOCP}. The procedure returns the first-found significantly-violated cut, i.e., it finds a \DC greedily. A cut $\alpha'x +\beta'y \geq \tau$ is considered to be \emph{significantly violated} by $(x^*,y^*)$ if $\tau - \alpha'x^* -\beta'y^*> \varepsilon$ for some $\varepsilon>0$. 
%\todo{value epsilon}

%The following explanation could be removed. 
If $(x^*,y^*)$ is a bilevel-infeasible solution satisfying integrality constraints, Algorithm~\ref{alg:separation} returns a violated cut with both strategies. 
Otherwise, i.e., if $(x^*,y^*)$ is not integer, a cut may not be obtained for the reasons discussed after Theorem \ref{tm:getCut}.

%, because it is possible that there is no feasible $\hat y$ for the follower problem with $q(\hat y)< q(y^*)$. 

%The separation of fractional solutions is not needed for correctness, but can be implemented to enhance(?) the branch-and-cut procedure. In the settings that we separate fractional node solutions $(x^*,y^*)$, similar to integer separation we either solve the follower problem optimally, or solve it until a violated cut is found. 

\SetKwRepeat{Do}{do}{while}

\begin{algorithm}[tbh]
\LinesNumbered
\SetKwInOut{Input}{Input}\SetKwInOut{Output}{Output}
\Input{A feasible \ccrHPR solution $(x^*, y^*)$, a separation \emph{strategy} \texttt{O} or \texttt{G}, a set $\mathcal{P}$} 
\Output{A significantly violated \DC or nothing} 
\BlankLine 
{
\While{the follower problem is being solved for $x=x^*$ by an enumeration-based method}
{
\For{each feasible integer $\hat{y}$ with $q(\hat{y})<q(y^*)$}
{
\If{$\textit{strategy}=\texttt{G}$ or ($\textit{strategy}=\texttt{O}$ and $\hat{y}$ is optimal)}
{
solve (CG-SOCP) for $(x^*, y^*)$, $\hat{y}$ and $\mathcal{P}$\;
\If{$\tau - \alpha'x^* -\beta'y^*>\varepsilon$ }
{
\Return{$\alpha'x +\beta'y \geq \tau$}\;
}
}}}}
% \Return{WHAT\;}
\caption{\texttt{separation}\label{alg:separation}}
\end{algorithm}

\iffalse

\begin{algorithm}[h!tb]
\LinesNumbered
\SetKwInOut{Input}{Input}\SetKwInOut{Output}{Output}
\Input{A feasible \ccrHPR solution $(x^*, y^*)$} 
\Output{A significantly violated disjunctive cut or nothing} 
\BlankLine 
{
solve the follower problem to optimality for $x=x^*$, let $\hat{y}$ be the obtained optimal solution\;
\If{$q(\hat{y})<q(y^*)$}
{
solve (CG-SOCP) for $(x^*, y^*)$ and $\hat{y}$\;
\If{$\tau - \alpha'x^* -\beta'y^*> \varepsilon$}
{
\Return{$\alpha'x +\beta'y \geq \tau$}\;
}
}}
% \Return{WHAT\;}
\caption{\texttt{separation-O}\label{alg:separation-O}}
\end{algorithm} 

\begin{algorithm}[h!tb]
\LinesNumbered
\SetKwInOut{Input}{Input}\SetKwInOut{Output}{Output}
\Input{A feasible \ccrHPR solution $(x^*, y^*)$} 
\Output{A significantly violated disjunctive cut or nothing} 
\BlankLine 
{
\While{the follower problem is being solved for $x=x^*$ by an enumeration based method}
{
\For{each feasible integer $\hat{y}$ with $q(\hat{y})<q(y^*)$}
{
solve (CG-SOCP) for $(x^*, y^*)$ and $\hat{y}$\;
\If{$\tau - \alpha'x^* -\beta'y^*>\varepsilon$ }
{
\Return{$\alpha'x +\beta'y \geq \tau$}\;
}
}}}
% \Return{WHAT\;}
\caption{\texttt{separation-G}\label{alg:separation-G}}
\end{algorithm}

 \fi

 \subsection{Removing redundant disjunctions} \label{sec:removal}
 
 The examples given in the proof of Theorem~\ref{th:nondominance} illustrate that removing redundant disjunctions could lead to faster separation and also to \DCs which dominate the \DCs obtained without the removal of such disjunctions:
 
 \begin{itemize}
     \item For $\hat y_2$, the sets $\mathcal P \cap D_i(\hat y_2)$ for $i \in\{2,3,4\}$ are empty. Thus we do not need to consider these disjunctions when defining \eqref{CG-SOCP}, which leads to a smaller SOCP in the separation procedure.
     \item For $\hat y_1$, the set $\mathcal P \cap D_3(\hat y_1)$ does not contain any bilevel-feasible solution, as it does not contain any integer solution. By removing the disjunction $D_3(\hat y_1)$,  a new  \DC $y \leq 2$ can be obtained by using cut-coefficient normalization with 1-norm. This new \DC dominates the \DC obtained when $D_3(\hat y_1)$ is included.
 \end{itemize}
 
 Thus, ideally, we would like to eliminate disjunctions $\mathcal{D}_i(\hat y)$ which do not contain any bilevel-feasible solution. Because this condition is very difficult to verify, as pointed out in~\cite[cf.\ Theorem 5 and Corollary 1]{fischetti2018use}, we could simply check whether 
 $\mathcal{D}_i(\hat y)$ is not satisfied for any point satisfying the variable bounds of $\mathcal{P}$ (thus, relaxing the condition of checking whether $\mathcal{D}_i(\hat y)$ is not satisfied for any point in $\mathcal{P}$) for $i = 1, \dots, m_2$. In particular, we could check whether
 \[  \sum_{j=1}^{n_1} \tilde x_j 
 > f_i - B^i \hat{y} \]
holds, where $\tilde x_j := \min\{ A^i_j x^+_j, A^i_j x^-_j\}$
 and 
% $\tilde y_\red{j} = \min\{ \red{B^i_j} y^+_i, \red{B^i_j} y^-_i\}$, 
where $x^+_j$%, $y^+_\red{j}$ 
%(respectively, 
and $x^-_j$%, $y^-_\red{j}$
%) 
are the upper and lower bounds imposed on $x$ 
%and $y$ 
inside of $\mathcal{P}$, respectively.
 %\todo[inline]{Here the previous notation was not fitting to our notation, so I have adapted it. @Ivana @Kübra, can you please double check whether this now really means what you want it to mean? (Because you two wrote this paragraph).} 
 However, this only considers each disjunction with variable bounds individually and may not be very effective. In what follows, we propose several other approaches, ordered by their computational effort.%\todo{make notation consistent: $\mathcal{D}_i(\hat y) $ vs $\mathcal{D}_i$} 

     \paragraph{Relaxation-based removal.} A disjunction $\mathcal{D}_i(\hat y)$ is redundant if $\mathcal{P} \cap \mathcal{D}_i(\hat y) = \emptyset$. Checking this requires solving a (small) SOCP. 
     %, which could also reduce to an LP in case $\mathcal{P}$ just contains linear constraints and $\mathcal{D}_i(\hat y)$ is linear. 
     If the disjunction can be removed, \eqref{CG-SOCP} is smaller. Moreover, each \DC which can be obtained when the disjunction is considered in \eqref{CG-SOCP}, can also be obtained when the disjunction is removed before solving \eqref{CG-SOCP}.
    % and the (potentially) resulting cut is not stronger than the cut obtained without removal of the disjunction. %\todo{Do we need to prove this? I: yes, because I think this is not true, because of degeneracy.}
    
     \paragraph{Integrality-based removal.} A disjunction $\mathcal{D}_i(\hat y)$ is redundant if $ \mathcal{P} \cap \mathcal{D}_i(\hat y) \cap \mathbb{Z}^n = \emptyset$. Checking this requires solving a (small) integer-SOCP. If such a  disjunction is removed, \eqref{CG-SOCP} is smaller and the (potentially) resulting \DC can dominate the \DC obtained without removal of the disjunction (see, e.g., the example discussed above for $\hat y_1$).
     
    \paragraph{Optimality-based removal.} A disjunction $\mathcal{D}_i(\hat y)$ is redundant if among the solutions in $\mathcal{P} \cap \mathcal{D}_i(\hat y) \cap \mathbb{Z}^n$, there is no solution that improves the current best objective-function value (say, $U\!B$) of the leader.
     \begin{theorem}\label{thm:redundant}
 Let $U\!B$ be the objective-function value of the best-known feasible solution for the bilevel program~\eqref{bilevel}. Let furthermore $\bar U_i$ be the optimal objective-function value of the  problem
 \[ \min \{ c'x + d' y : (x,y) \in \mathcal{P} \cap \mathcal{D}_i (\hat y) \cap \mathbb{Z}^n \}. 
 \]
 If $\bar U_i \ge U\!B$, then the disjunction $\mathcal{D}_i(\hat y)$ is redundant and can be discarded from $D(\hat{y},\mathcal{P})$ and thus also from  \eqref{CG-SOCP}. 
 \end{theorem}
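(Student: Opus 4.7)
The plan is to show that removing disjunction $\mathcal{D}_i(\hat{y})$ from the construction of $\mathcal{D}(\hat{y},\mathcal{P})$ (and thus from \eqref{CG-SOCP}) is safe in the sense that any \DC generated from the reduced disjunction family remains valid for every bilevel-feasible solution that could potentially improve the incumbent value $U\!B$. The key observation is that the full disjunction $\mathcal{D}_0(\hat{y}) \vee \mathcal{D}_1(\hat{y}) \vee \cdots \vee \mathcal{D}_{m_2}(\hat{y})$ was introduced precisely because every bilevel-feasible solution must satisfy at least one of its branches; hence a \DC from \eqref{CG-SOCP} is guaranteed to be satisfied by all bilevel-feasible points in $\mathcal{P}$. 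Dropping $\mathcal{D}_i(\hat{y})$ may only endanger the validity of the resulting \DC for solutions that lie in $\mathcal{P} \cap \mathcal{D}_i(\hat{y}) \cap \mathbb{Z}^n$ \emph{and} in no other $\mathcal{D}_j(\hat{y})$.

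First, I would make precise that any \DC derived from the reduced family $\bigvee_{j\ne i}\mathcal{D}_j(\hat{y})$ is, by the standard Balas argument, valid for
\[
\conv\Bigl(\bigcup_{j\ne i}\bigl\{(x,y)\in \mathcal{P}:\mathcal{D}_j(\hat y)\bigr\}\Bigr),
\]
and therefore valid for every bilevel-feasible solution of \eqref{bilevel} that happens to lie in this set. Next, I would isolate the only ``dangerous'' set, namely those bilevel-feasible points in $\mathcal{P} \cap \mathcal{D}_i(\hat{y}) \cap \mathbb{Z}^n$ that are simultaneously not in $\mathcal{D}_j(\hat{y})$ for any $j\ne i$. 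Any such point is in particular a member of $\mathcal{P} \cap \mathcal{D}_i(\hat{y}) \cap \mathbb{Z}^n$, so its leader objective value is at least the optimal value $\bar{U}_i$ of the minimization problem stated in the theorem.

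Using the hypothesis $\bar{U}_i \geq U\!B$, it follows immediately that every such point has leader objective at least $U\!B$, and hence cannot strictly improve upon the current best-known feasible solution. I would then invoke the standard branch-and-cut pruning argument: within the enclosing \BC algorithm, any feasible solution with objective value $\geq U\!B$ is irrelevant to finding an optimal solution, and can be cut off without harm. Therefore the \DC obtained from \eqref{CG-SOCP} after discarding $\mathcal{D}_i(\hat{y})$ is safe in the algorithmic sense, which is exactly what ``$\mathcal{D}_i(\hat{y})$ is redundant'' means here.

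The main obstacle, and the subtlety I would emphasize, is that a \DC produced after removing $\mathcal{D}_i(\hat{y})$ is \emph{not} in general valid for the original $\mathcal{D}(\hat{y},\mathcal{P})$: it may genuinely cut off bilevel-feasible solutions lying in $\mathcal{P} \cap \mathcal{D}_i(\hat{y})$. Thus one must be careful to interpret ``redundant'' relative to the incumbent $U\!B$, and not as a geometric statement about $\mathcal{D}(\hat{y},\mathcal{P})$. Once this distinction is made explicit, the proof reduces to combining the optimality bound $\bar{U}_i \geq U\!B$ with the correctness of cutting off provably suboptimal regions inside \BC, and the conclusion follows.
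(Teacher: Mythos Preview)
Your proposal is correct and follows essentially the same argument as the paper: any integer point in $\mathcal{P}\cap\mathcal{D}_i(\hat y)$ has leader objective at least $\bar U_i\ge U\!B$, so removing the disjunction cannot eliminate an improving (hence optimal) bilevel-feasible solution. The paper's proof is a terse three sentences; your version spells out the ``dangerous set'' and the Balas validity argument explicitly, and usefully flags that the resulting \DC need not be valid for the original $\mathcal{D}(\hat y,\mathcal{P})$ but only relative to the incumbent $U\!B$.
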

 
 \begin{proof}
 If $\bar U_i \ge U\!B$, then any solution in $\mathcal{P} \cap \mathcal{D}_i(\hat y) \cap \mathbb{Z}^n$ is not better for the bilevel program~\eqref{bilevel} than the best-known feasible solution. Thus any better solution cannot be in $\mathcal{P} \cap \mathcal{D}_i(\hat y) \cap \mathbb{Z}^n$ and therefore no optimal solution is missed by removing $\mathcal{D}_i(\hat y)$.
 \ifArXiV %---------------------------------------------------------
\else %--------------------------------------------------------------
\qed
\fi %--------------------------------------------------------------
 \end{proof}
 
 If the disjunction can be removed, \eqref{CG-SOCP} is smaller and the (potentially) resulting \DC can  dominate the \DC obtained without removal of the disjunction. In particular, the removal can result in such a cut even if the integrality-based removal fails to remove a disjunction: For example, consider a slightly modified version of the instance in the proof of Theorem~\ref{th:nondominance}, where constraint \eqref{eq:gMB3} is replaced by $-4x+3y\geq -7$. In this case, for $\hat y_1=2$, we have $\mathcal{D}_3(\hat{y_1}): x \ge 7/2$ and the set $\mathcal P \cap D_3(\hat y_1)$ contains an integer solution, namely $(4,3)$. Thus, with the first two approaches of removing redundant disjunctions, we keep the disjunction $D_3(\hat y_1)$ and would get the \DC $-1.25x+3.1y\leq 5.7$ using cut-coefficient normalization with $1$-norm (see Figure~\ref{fig:rr1}). However, suppose that  we know, e.g., of the bilevel-feasible solution $(2,2)$ which has the leader objective-function value $U\!B=0$. Because $\bar U_3=1$, the condition of Theorem~\ref{thm:redundant} is satisfied, and the disjunction can be removed. The resulting \DC in this case is $y\leq 2$ using cut-coefficient normalization with $1$-norm (see Figure~\ref{fig:rr2}).

 \begin{figure}[tbh]
    \centering
    \begin{subfigure}{0.48\textwidth}
    \includegraphics[width=1.0\textwidth]{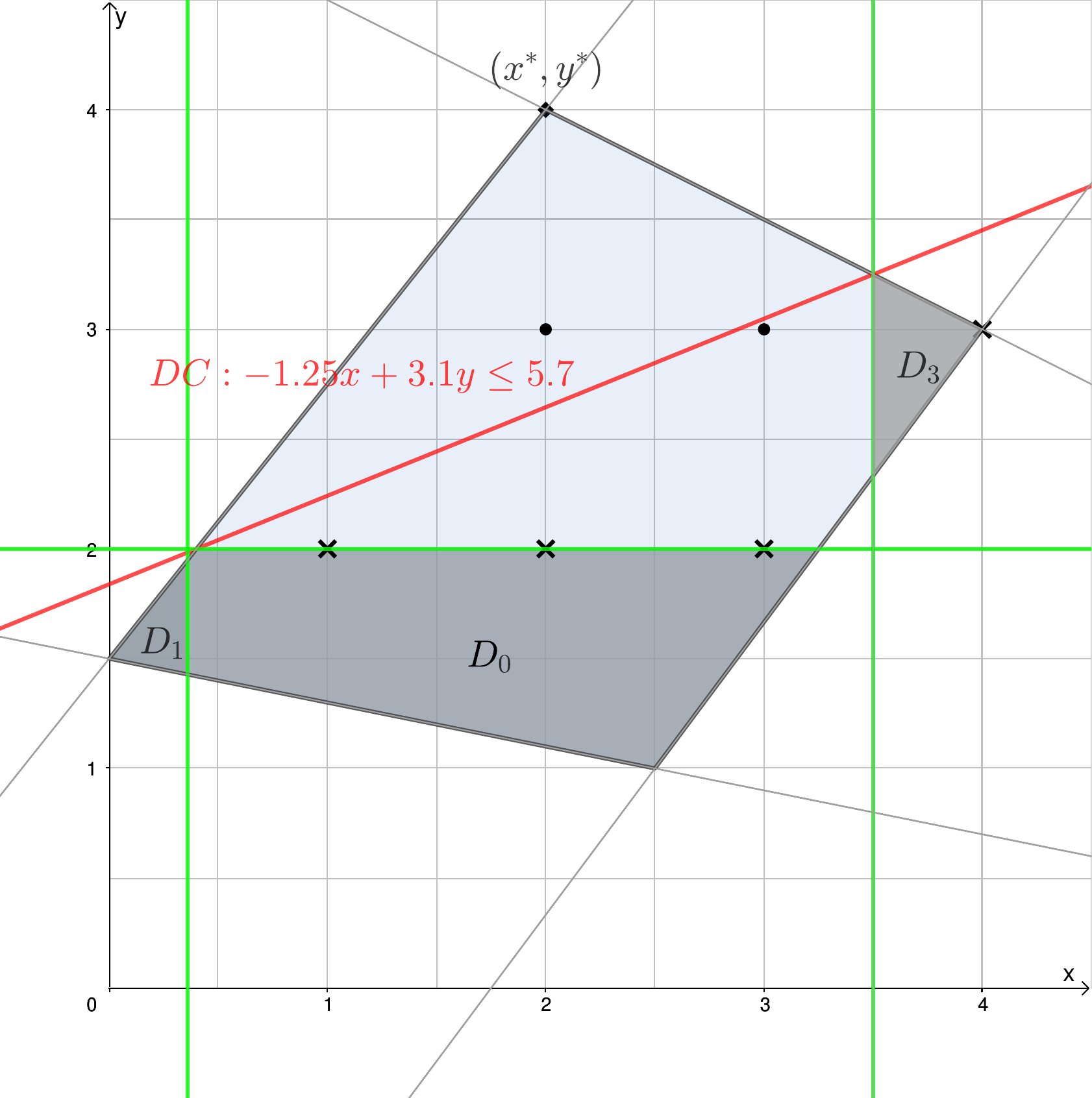}
    \caption{Example where both relaxation-based and integrality-based removal fail.\label{fig:rr1}}
    \end{subfigure}\quad
        \begin{subfigure}{0.48\textwidth}
     \includegraphics[width=1.0\textwidth]{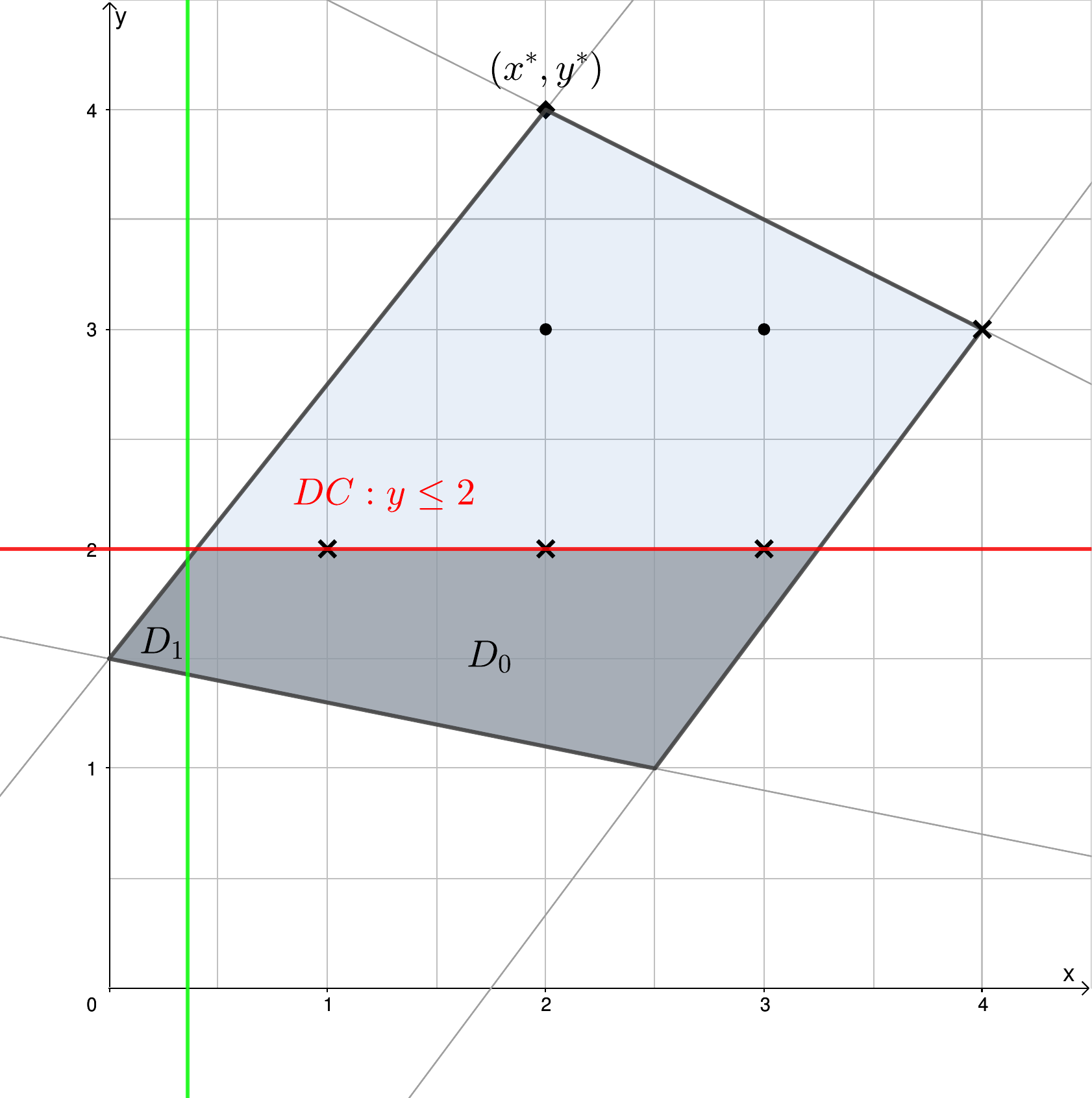}
     \caption{\DC derived after applying optimality-based removal to the disjunction $D_3(\hat y_1)$\label{fig:rr2}}
        \end{subfigure}
    \caption{An example illustrating optimality-based removal of disjunctions.}
    \label{fig:redundant}
\end{figure}

\subsection{Normalization}
\label{sec:normalization}

As mentioned before, we need to deal with the fact that the feasible region of \eqref{CG-SOCP}
is a cone. So \eqref{CG-SOCP} either has its optimum at the origin (implying that
%$(\hat{x},\hat{y})$
$(x^*,y^*)$ cannot be separated), or \eqref{CG-SOCP} is unbounded, implying that
there is a violated inequality, which of course we could scale by any positive
number  so as to make the violation as large as we like. The standard remedy for
this is to introduce a normalization constraint to \eqref{CG-SOCP}.
%, effectively 
%replacing
%each ray of the feasible region with an extreme point. The precise normalization
%used can have a very significant effect on the quality of the cut generated.
A typical good choice (see~\cite{Fischetti2011})
is to additionally impose 
%$\|(\bar{\pi}_1, \bar{\pi}_2, \tilde{\pi}_1, \tilde{\pi}_2, 
%\sigma, \rho  )\|_1\ \leq 1$,
\[ \|((\bar{\pi_i})_{i=0}^{m_2}, \; (\tilde{\pi}_i)_{i=0}^{m_2}, \;  \sigma, \; \rho  )\|_1\ \leq 1
\]
on \eqref{CG-SOCP}, 
i.e., that the 1-norm of the set of dual multipliers is unity. 
%%This can be implemented
%%with auxiliary variable vectors \red{$\tilde{\pi}_i^\uparrow$, $i=0,\dots,m_2$, }
%%$\tilde{\pi}_2^\uparrow$,
%$\rho^\uparrow$, and the linear constraints
%% $\tilde{\pi}_1 \leq \tilde{\pi}_1^\uparrow$,
%%  $-\tilde{\pi}_1 \leq \tilde{\pi}_1^\uparrow$,
%% $\tilde{\pi}_2 \leq \tilde{\pi}_2^\uparrow$,
%% $-\tilde{\pi}_2 \leq \tilde{\pi}_2^\uparrow$,
%% $\rho \leq \rho^\uparrow$,
%% $-\rho \leq \rho^\uparrow$,
%% $e' \bar{\pi_1} + e' \bar{\pi_2} + e' \tilde{\pi}_1^\uparrow +  e' 
%% \tilde{\pi}_2^\uparrow  -\sigma
%% + e' \rho^\uparrow = 1$. 
%\red{
%\begin{align*}
%\tilde{\pi}_i & \leq \tilde{\pi}_i^\uparrow, \quad i=0,\dots,m_2\\
%-\tilde{\pi}_i & \leq \tilde{\pi}_i^\uparrow, \quad i=0,\dots,m_2\\
%\rho & \leq \rho^\uparrow, \\
%-\rho & \leq \rho^\uparrow, \\
%\sum_{i=0}^{m_2}e' \bar{\pi_i} + \sum_{i=0}^{m_2} e' \tilde{\pi}_i^\uparrow %-\sigma + e' \rho^\uparrow & = 1. 
%\end{align*}
%}
%\todo{change notation, maybe shorten}
%but in our context, 
Because we are using a conic solver,
we can alternatively 
%\red{(and more easily and efficiently)}\footnote{to be seen, based on the experiments}  
impose
 %$\|(\bar{\pi}_1, \bar{\pi}_2, \tilde{\pi}_1, \tilde{\pi}_2, \sigma, \rho    )\|_2\ \leq 1$,
\[ \|((\bar{\pi_i})_{i=0}^{m_2}, \; (\tilde{\pi}_i)_{i=0}^{m_2}, \;  \sigma, \; \rho  )\|_2\ \leq 1,
\]
 which is just one constraint for a conic solver. 
 This kind of normalization, where the norm of the vector of all dual variables is bounded by 1, is called the \emph{standard} normalization (see, e.g., Lodi et al.~\cite{Lodi-et-al:2020}).% and can be implemented in a straight-forward way.
 
 %Thus, we will from now on consider normalization as part of \eqref{CG-SOCP}.
In~\cite{Lodi-et-al:2020}, not only the standard normalization but also \emph{uniform} normalization is concluded to be among the best normalizations in terms of numerical robustness. In uniform normalization, only the norm of the vector of dual variables corresponding to the constraints shared by all disjunctions (and not to the ones defining the disjunctions) is bounded, i.e., 
\[ \|((\bar{\pi_i})_{i=0}^{m_2}, \; (\tilde{\pi}_i)_{i=0}^{m_2} )\|_p\ \leq 1,
\]
where $p \in \{1,2\}$. Note, that~\cite{Lodi-et-al:2020} considered only a generalization of the $1$-norm, and not the $2$-norm, for both the standard and the uniform normalization.

Another alternative is \emph{cut-coefficient} normalization, where the norm of the cut-coefficients $\alpha$ and $\beta$ is bounded by one; so
\[ \|(\alpha, \; \beta )\|_p\ \leq 1
\]
is imposed, typically for some $p \in \{1,2\}$.
This may seem to be the most intuitive kind of normalization, as solving the \eqref{CG-SOCP} yields the desired cut-coefficients.

\paragraph{Theoretical considerations concerning normalization.}
To investigate the influence of normalization, we next present the duals of \eqref{CG-SOCP}. 
Without normalization, the dual has objective function zero and the feasible region is 
\begin{subequations}
\label{dual}
\begin{align}
&\textstyle \sum_{i=0}^{m_2}x_i = x^*  \label{deq1}\\
&\textstyle \sum_{i=0}^{m_2}y_i = y^* \label{deq2}\\
&\textstyle \sum_{i=0}^{m_2}\lambda_i = 1 \label{deq3}\\
&\lambda_i \ge 0 
&& \forall i =0,\ldots, m_2\label{deq4} \\
&\bar{M}x_i + \bar{N}y_i \ge \lambda_i\bar{h} 
&& \forall i =0,\ldots, m_2\label{deq5} \\
&\tilde{M}x_i + \tilde{N}y_i - \lambda_i \tilde{h} \in \mathcal{K^{**}}
&& \forall i =0,\ldots, m_2\label{deq6} \\
&\tilde{D}y_0 - \lambda_0\tilde{c} \in \mathcal{Q}\label{deq7}\\
&A^i x_i \leq \lambda_i (f_i - B^i \hat{y}  - 1) 
&& \forall i =1,\ldots, m_2,\label{deq8} 
\end{align}
\end{subequations}
i.e., the dual tries to find points $(x_i,y_i)$, such that $(x^*,y^*)$ is the sum of these points, and such that either $\lambda_i = 0$ or $\lambda_i > 0$ and $(\frac{1}{\lambda_i}x_i,\frac{1}{\lambda_i}y_i) \in \mathcal{P} \cap \mathcal{D}_i(\hat{y})$. As a consequence, \eqref{dual} is feasible if and only if $(x^*,y^*)$ is in 
$ \conv \left( \bigcup_{i=0}^{m_2} (\mathcal{P} \cap \mathcal{D}_i(\hat{y})) \right)$. Note that this corresponds exactly to the case that the primal  \eqref{CG-SOCP} does not find a violated cut, i.e., its optimal objective-function value  is zero. 

When deriving the duals of \eqref{CG-SOCP} with normalization, we assume that the normalization was imposed with the $p$-norm for $p \in \{1,2\}$. Let the $p^*$-norm be the dual norm of the $p$-norm, i.e. $p^* = 2$ for $p=2$ and $p^* = \infty$ for $p=1$. Note that normalizing \eqref{CG-SOCP} using the $p$-norm leads to a $p^*$-norm in the objective function of the dual.

In case of standard normalization, the dual of \eqref{CG-SOCP} is
\begin{subequations}
\label{dualS}
\begin{align}
- &\min \| ((\bar{\mu}_i)_{i=0}^{m_2}, \;  (\tilde{\mu}_i)_{i=0}^{m_2}, \; \mu_\sigma, \; \mu_\rho)\|_{p^*}\\
\st~ &\eqref{deq1}-\eqref{deq4} \\
&\bar{M}x_i + \bar{N}y_i \ge \lambda_i\bar{h} + \bar{\mu}_i
&& \forall i =0,\ldots, m_2\label{dSeq5} \\
&\tilde{M}x_i + \tilde{N}y_i - \lambda_i \tilde{h} - \tilde{\mu}_i \in \mathcal{K^{**}}
&& \forall i =0,\ldots, m_2\label{dSeq6} \\
&\tilde{D}y_0 - \lambda_0\tilde{c} - \mu_\rho \in \mathcal{Q}\label{dSeq7}\\
&A^i x_i \leq \lambda_i (f_i - B^i \hat{y}  - 1) - {\mu_\sigma}_i
&& \forall i =1,\ldots, m_2.\label{dSeq8} 
\end{align}
\end{subequations}

\begin{Observation}\label{ob:norm1}
The problem \eqref{dualS} is always feasible, and there is a feasible interior point due to the free variables 
$(\bar{\mu}_i)_{i=0}^{m_2}$, $(\tilde{\mu}_i)_{i=0}^{m_2}$,  $\mu_\sigma$ and $\mu_\rho$, which relax the constraints to be in $\mathcal{P}$ and to be in $\mathcal{D}_i(\hat{y})$. 
The optimal objective-function value of \eqref{dualS} is zero if and only if $(x^*,y^*)$ is in $ \conv \left( \bigcup_{i=0}^{m_2} (\mathcal{P} \cap \mathcal{D}_i(\hat{y})) \right)$, i.e., if and only if there is no violated \DC for $(x^*,y^*)$.
\end{Observation}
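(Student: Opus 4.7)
The plan is to prove both parts by viewing the free slack variables $\bar{\mu}_i, \tilde{\mu}_i, \mu_\sigma, \mu_\rho$ as perturbations that turn the unnormalized dual system \eqref{dual} into the normalized one \eqref{dualS}. Both claims follow from this observation together with the discussion preceding the observation, which already characterized feasibility of \eqref{dual}.

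First, to exhibit an interior feasible point for \eqref{dualS}, I would fix any $(x_i, y_i, \lambda_i)$ satisfying the linear equalities \eqref{deq1}--\eqref{deq3} with all $\lambda_i$ strictly positive; for concreteness, set $\lambda_i = 1/(m_2+1)$ and $(x_i, y_i) = \lambda_i (x^*, y^*)$ for $i = 0, \dots, m_2$. Since each $\bar{\mu}_i, \tilde{\mu}_i, \mu_\sigma, \mu_\rho$ is unconstrained in sign and appears linearly in \eqref{dSeq5}--\eqref{dSeq8}, we can absorb any violation of these constraints by the slacks and additionally push each residual strictly into the interior of the relevant cone. This uses only that the nonnegative orthant, $\mathcal{K^{**}}$, and $\mathcal{Q}$ have nonempty interiors.

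For the characterization of the zero optimum, the key observation is that fixing $\bar{\mu}_i \equiv 0$, $\tilde{\mu}_i \equiv 0$, $\mu_\sigma \equiv 0$, $\mu_\rho \equiv 0$ in \eqref{dualS} reduces the constraint system exactly to \eqref{dual}. Since the objective is $-\|(\bar{\mu}, \tilde{\mu}, \mu_\sigma, \mu_\rho)\|_{p^*}$ and any norm vanishes only at the origin, the optimal objective-function value of \eqref{dualS} is zero if and only if \eqref{dualS} admits a feasible solution with all slacks equal to zero, equivalently if and only if \eqref{dual} is feasible. The paragraph immediately preceding the observation establishes that this is in turn equivalent to $(x^*,y^*) \in \conv\bigl(\bigcup_{i=0}^{m_2}(\mathcal{P}\cap \mathcal{D}_i(\hat{y}))\bigr)$. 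Finally, strong duality between \eqref{CG-SOCP} (with the standard normalization) and \eqref{dualS}, which is applicable thanks to the interior point just constructed, ties the primal's optimal value to the dual's, so the dual optimum being zero is equivalent to the nonexistence of a violated \DC separating $(x^*,y^*)$ from $\mathcal{D}(\hat{y},\mathcal{P})$.

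The step requiring the most care is the conic strict feasibility in \eqref{dSeq6}--\eqref{dSeq7}: one must verify that for the chosen $(x_i, y_i, \lambda_i)$ the free slacks $\tilde{\mu}_i$ and $\mu_\rho$ can indeed be picked so that $\tilde{M}x_i + \tilde{N}y_i - \lambda_i \tilde{h} - \tilde{\mu}_i$ and $\tilde{D}y_0 - \lambda_0 \tilde{c} - \mu_\rho$ lie in the interiors of $\mathcal{K^{**}}$ and $\mathcal{Q}$ respectively. This is routine given that both cones are solid and the slacks enter affinely with full range, but it is the one point where one should be explicit in order to invoke Slater's condition and conclude strong duality.
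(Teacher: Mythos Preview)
Your proposal is correct and follows essentially the same approach as the paper. The paper states this as an observation without a separate proof; the justification is embedded in the statement itself (the free slack variables relax all constraints defining $\mathcal{P}$ and $\mathcal{D}_i(\hat y)$, and setting them to zero recovers \eqref{dual}). Your write-up simply makes these steps explicit---constructing a concrete strictly feasible point and then reducing the zero-optimum characterization to the feasibility of \eqref{dual}, which the preceding paragraph already ties to membership in $\conv\bigl(\bigcup_i(\mathcal{P}\cap\mathcal{D}_i(\hat y))\bigr)$.

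One small remark: your direct argument ``optimal value zero $\Leftrightarrow$ a feasible solution with all slacks zero exists'' tacitly assumes the infimum is attained. You do not need this, since your final sentence already closes the loop via strong duality (Slater on the dual gives primal opt $=$ dual opt, and the primal side yields the equivalence with the nonexistence of a violated DC directly). So the strong-duality route alone suffices, and the attainment issue is harmless. If you wanted to argue attainment independently, boundedness of $\mathcal{P}$ (Assumption~\ref{as:bound1}) plus a routine compactness argument on the $(x_i,y_i,\lambda_i)$ would do it.
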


Note that Observation~\ref{ob:norm1} is compatible with what is observed in~\cite{Lodi-et-al:2020} when deriving split-cuts for mixed-integer \SOCP using disjunctive programming with \SOCP.
%In particular, and as already observed in~\cite{Lodi-et-al:2020}, \eqref{dualS} is always feasible and there is a feasible interior point due to the free variables 
%$(\bar{\mu}_i)_{i=0}^{m_2}$, $(\tilde{\mu}_i)_{i=0}^{m_2}$,  $\mu_\sigma$ and $\mu_\rho$, which relax the constraints to be in $\mathcal{P}$ and to be in $\mathcal{D}_i(\hat{y})$. 
%The optimal objective function of \eqref{dualS} is zero if and only if $(x^*,y^*)$ is in $ \conv \left( \bigcup_{i=0}^{m_2} (\mathcal{P} \cap \mathcal{D}_i(\hat{y})) \right)$, i.e., if and only if there is no violated \DC for $(x^*,y^*)$.

For the uniform normalization, the dual of \eqref{CG-SOCP} is
\begin{subequations}
\label{dualU}
\begin{align}
- &\min \| ((\bar{\mu}_i)_{i=0}^{m_2}, \;  (\tilde{\mu}_i)_{i=0}^{m_2})\|_{p^*}\\
\st~ & \eqref{deq1}-\eqref{deq4}, \eqref{dSeq5}, \eqref{dSeq6}, \eqref{deq7}, \eqref{deq8}.
\end{align}
\end{subequations}

\begin{Observation}\label{ob:norm2}
The problem \eqref{dualU} is not necessarily feasible, because only the constraints to be in $\mathcal{P}$ are relaxed with the variables 
$(\bar{\mu}_i)_{i=0}^{m_2}$ and $(\tilde{\mu}_i)_{i=0}^{m_2}$.
To be more precise, 
\eqref{dualU} is feasible if and only if $(x^*,y^*)$ is in $ \conv \left( \bigcup_{i=0}^{m_2} \mathcal{D}_i(\hat{y}) \right)$.
Due to the structure of our disjunctions (i.e., they are based on the follower constraints and the follower objective function), the point $(x^*,y^*)$ may not be in $ \conv \left( \bigcup_{i=0}^{m_2} \mathcal{D}_i(\hat{y}) \right)$ and thus \eqref{dualU} could be infeasible.

Furthermore, as for standard normalization, the optimal objective-function value of \eqref{dualU} is zero if and only if $(x^*,y^*)$ is in $ \conv \left( \bigcup_{i=0}^{m_2} (\mathcal{P} \cap \mathcal{D}_i(\hat{y})) \right)$.
\end{Observation}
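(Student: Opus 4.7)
The plan is to analyze \eqref{dualU} by direct comparison with \eqref{dual}: the two problems differ only in that \eqref{dualU} relaxes the constraints \eqref{dSeq5} and \eqref{dSeq6} with the free slacks $\bar{\mu}_i$ and $\tilde{\mu}_i$, while the disjunction-defining constraints \eqref{deq7} and \eqref{deq8} and the convex-combination conditions \eqref{deq1}--\eqref{deq4} remain intact. Both equivalences claimed in the observation will then follow by disjunctive-programming reasoning analogous to the proof of Observation~\ref{ob:norm1}.

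For the feasibility equivalence, I would first argue that the slacks can always absorb \eqref{dSeq5} and \eqref{dSeq6}: $\bar{\mu}_i$ is free in sign, and $\tilde{\mu}_i$ enters via $-\tilde{\mu}_i$ ranging over $\mathcal{K}^{**}=\mathcal{K}$, whose interior is nonempty because $\mathcal{K}$ is a cross-product of second-order cones, so any right-hand side can be matched. Hence \eqref{dualU} is feasible iff there exist $(\lambda_i,x_i,y_i)_{i=0}^{m_2}$ satisfying \eqref{deq1}--\eqref{deq4} together with \eqref{deq7} and \eqref{deq8}. These remaining constraints are exactly the standard disjunctive-programming encoding of $(x^*,y^*)$ being a convex combination of points in $\bigcup_{i=0}^{m_2}\mathcal{D}_i(\hat{y})$, with the usual convention that when $\lambda_i=0$ the corresponding $(x_i,y_i)$ lies in the recession cone of $\mathcal{D}_i(\hat{y})$ (which one reads off directly from \eqref{deq7} and \eqref{deq8} after formally dividing by $\lambda_i$). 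This yields the stated characterization; the first part of the observation, that infeasibility can actually occur, then follows because we now compare $(x^*,y^*)$ against $\conv(\bigcup_i \mathcal{D}_i(\hat{y}))$ rather than against $\conv(\bigcup_i(\mathcal{P}\cap \mathcal{D}_i(\hat{y})))$, and the individual $\mathcal{D}_i(\hat{y})$ alone may not cover $(x^*,y^*)$ without the help of the constraints defining $\mathcal{P}$.

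For the optimal-value equivalence, I would observe that $-\min\|((\bar{\mu}_i)_{i=0}^{m_2},(\tilde{\mu}_i)_{i=0}^{m_2})\|_{p^*}$ is always nonpositive and equals zero iff \eqref{dualU} admits a feasible point with $\bar{\mu}_i=0=\tilde{\mu}_i$ for every $i$. Substituting zero slacks reduces \eqref{dSeq5} and \eqref{dSeq6} back to \eqref{deq5} and \eqref{deq6}, so the optimum of \eqref{dualU} is zero iff \eqref{dual} is feasible, which by the discussion immediately following \eqref{dual} is equivalent to $(x^*,y^*) \in \conv(\bigcup_{i=0}^{m_2}(\mathcal{P}\cap \mathcal{D}_i(\hat{y})))$.

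The main obstacle I anticipate is the careful bookkeeping for the recession-direction case $\lambda_i=0$ in the disjunctive representation: the scaled conic constraint \eqref{deq7} with $\lambda_0=0$ requires $\tilde{D}y_0\in\mathcal{Q}$, which unwinds to $V y_0=0$ and $\linq' y_0\le 0$, namely the recession cone of $\{y:q(y)\le q(\hat{y})\}$; similarly \eqref{deq8} with $\lambda_i=0$ yields $A^i x_i\le 0$. Matching these recession-cone conditions to the closure interpretation of $\conv(\bigcup_i \mathcal{D}_i(\hat{y}))$ (and to $\conv(\bigcup_i(\mathcal{P}\cap \mathcal{D}_i(\hat{y})))$ in the second part), together with the verification that the slack absorption in \eqref{dSeq5} and \eqref{dSeq6} is genuinely free in the conic setting, is routine but must be stated explicitly to make the two equivalences airtight.
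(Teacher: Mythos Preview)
Your proposal is correct and follows essentially the same reasoning the paper relies on: the paper states this observation without a formal proof, deriving it implicitly from the disjunctive-programming interpretation of \eqref{dual} given just before Observation~\ref{ob:norm1}. Your argument simply makes that reasoning explicit, and your treatment of the $\lambda_i=0$ recession-cone case is in fact more careful than anything the paper spells out.

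One minor remark on phrasing: when you write that ``$\tilde{\mu}_i$ enters via $-\tilde{\mu}_i$ ranging over $\mathcal{K}^{**}=\mathcal{K}$'', this is slightly misleading. The slacks $\tilde{\mu}_i$ are \emph{free} variables (they arise from dualizing the norm constraint and carry no cone restriction), so the absorption argument is even simpler than you suggest: for any $(x_i,y_i,\lambda_i)$ one may set $\tilde{\mu}_i=\tilde{M}x_i+\tilde{N}y_i-\lambda_i\tilde{h}$, making the left-hand side of \eqref{dSeq6} equal to $0\in\mathcal{K}^{**}$. This does not affect the validity of your argument, only its presentation.
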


We note that Observation~\ref{ob:norm2} is different compared to what the authors of~\cite{Lodi-et-al:2020} obtain in their setting, as the convex hull of the disjunction for split cuts is $\mathbb R^n$ and thus their resulting problem \eqref{dualU} is always feasible.

%Clearly \eqref{dualU} is not necessarily feasible, because only the constraints to be in $\mathcal{P}$ are relaxed with the variables $(\bar{\mu}_i)_{i=0}^{m_2}$ and $(\tilde{\mu}_i)_{i=0}^{m_2}$.
%To be more precise, 
%\eqref{dualU} is feasible if and only if $(x^*,y^*)$ is in $ \conv \left( \bigcup_{i=0}^{m_2} \mathcal{D}_i(\hat{y}) \right)$.
%This implies that for split cuts the dual for the uniform normalization is always feasible, see~\cite{Lodi-et-al:2020}.
%However, this is no longer the case for our disjunctions, as they are based on the follower constraints and thus their convex hull is not necessarily $\mathbb{R}^n$.
%If \eqref{dualU} is infeasible, the uniform normalization was not enough to bound \eqref{CG-SOCP}.
%As for standard normalization, the optimal objective-function value of \eqref{dualU} is zero if and only if $(x^*,y^*)$ is in $ \conv \left( \bigcup_{i=0}^{m_2} (\mathcal{P} \cap \mathcal{D}_i(\hat{y})) \right)$.

For the cut-coefficient normalization, the dual of \eqref{CG-SOCP}
is
\begin{subequations}
\label{dualC}
\begin{align}
- &\min \| (\mu_x, \; \mu_y)\|_{p^*}\\
\st~ &\textstyle \sum_{i=0}^{m_2}x_i = x^* + \mu_x\label{d3eq1}\\
&\textstyle \sum_{i=0}^{m_2}y_i = y^* + \mu_y \label{d3eq2}\\
& \eqref{deq3}-\eqref{deq8}, 
\end{align}
\end{subequations}
so geometrically \eqref{dualC} determines a point in $ \conv  \left( \bigcup_{i=0}^{m_2} (\mathcal{P} \cap \mathcal{D}_i(\hat{y})) \right)$ that minimizes the distance (in $p^*$-norm) to $(x^*,y^*)$. %, i.e., a projection of $(x^*,y^*)$. 

\begin{Observation}\label{ob:normC}
Problem \eqref{dualC} is feasible if and only if 
$\conv \left( \bigcup_{i=0}^{m_2} (\mathcal{P} \cap \mathcal{D}_i(\hat{y})) \right)$ is non-empty. If \eqref{dualC} is infeasible,  then all disjunctions are empty (i.e., redundant as described in Section~\ref{sec:removal}). 
\end{Observation}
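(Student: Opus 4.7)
The plan is to read off the geometric meaning of the constraints of \eqref{dualC} and show that feasibility of \eqref{dualC} corresponds exactly to the convex hull in question being non-empty. First I would recall the observation (already made implicitly for \eqref{dual} in the discussion preceding \eqref{dualS}) that constraints \eqref{deq3}--\eqref{deq8} encode a disjunctive representation: with $\sum_{i=0}^{m_2}\lambda_i = 1$ and $\lambda_i\ge 0$, any $i$ with $\lambda_i>0$ forces $(x_i/\lambda_i,y_i/\lambda_i)\in\mathcal{P}\cap\mathcal{D}_i(\hat y)$, while the $i$ with $\lambda_i=0$ contribute only recession directions, which by boundedness (Assumption~\ref{as:bound1}) can be taken to be zero. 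Hence $\sum_{i=0}^{m_2}(x_i,y_i)$ ranges over exactly the convex hull $\conv\bigl(\bigcup_{i=0}^{m_2}(\mathcal{P}\cap\mathcal{D}_i(\hat y))\bigr)$.

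Next I would exploit the freeness of $\mu_x,\mu_y$. The constraints \eqref{d3eq1}--\eqref{d3eq2} impose $\sum x_i = x^*+\mu_x$ and $\sum y_i = y^*+\mu_y$, but since $\mu_x,\mu_y$ are unconstrained, any value of $(\sum x_i, \sum y_i)$ can be attained by a suitable choice of $(\mu_x,\mu_y)$. Therefore \eqref{dualC} is feasible if and only if there exist $\lambda_i,(x_i,y_i)$ satisfying \eqref{deq3}--\eqref{deq8}, which by the previous paragraph happens precisely when $\conv\bigl(\bigcup_{i=0}^{m_2}(\mathcal{P}\cap\mathcal{D}_i(\hat y))\bigr)\ne\emptyset$. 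This proves the ``if and only if'' claim.

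For the second assertion I would argue by contrapositive: if some $\mathcal{P}\cap\mathcal{D}_j(\hat y)$ is non-empty, pick any $(\bar x,\bar y)$ in it and set $\lambda_j=1$, $\lambda_i=0$ for $i\ne j$, $(x_j,y_j)=(\bar x,\bar y)$ and the remaining $(x_i,y_i)=0$, together with $\mu_x=\bar x - x^*$ and $\mu_y=\bar y - y^*$; this is feasible for \eqref{dualC}. Hence infeasibility of \eqref{dualC} forces every $\mathcal{P}\cap\mathcal{D}_i(\hat y)$ to be empty, in which case all disjunctions are redundant in the sense of Section~\ref{sec:removal}.

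The only delicate point is the handling of the $\lambda_i=0$ case in constraints \eqref{deq5}--\eqref{deq8}, where one has to justify that the $(x_i,y_i)$ may be taken to be zero rather than a nontrivial recession direction. This is where Assumption~\ref{as:bound1} is invoked: the boundedness of the variables in the \HPR{} renders each recession cone trivial on the relevant coordinates, so the disjunctive reformulation above is valid. Apart from this verification, the remainder is a direct translation between algebraic feasibility and the geometry of a convex hull.
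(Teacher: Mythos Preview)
Your proof is correct and matches the paper's intent. The paper itself does not give a proof of this observation; it only records the geometric reading of \eqref{dualC} in the sentence immediately preceding it (``\eqref{dualC} determines a point in $\conv(\cdots)$ that minimizes the distance to $(x^*,y^*)$''), and then states the observation without further argument. Your write-up supplies exactly the details behind that sentence.

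One small remark on your last paragraph: for the bare feasibility equivalence, Assumption~\ref{as:bound1} is not actually needed. In one direction, \eqref{deq3} forces some $\lambda_j>0$, and then $(x_j/\lambda_j,y_j/\lambda_j)\in\mathcal{P}\cap\mathcal{D}_j(\hat y)$, so the union (and hence the convex hull) is non-empty. In the other direction, when building a feasible point from some $(\bar x,\bar y)\in\mathcal{P}\cap\mathcal{D}_j(\hat y)$, you may simply set $(x_i,y_i)=(0,0)$ for all $i\ne j$: the homogeneous constraints at $\lambda_i=0$ are satisfied by the origin because $0$ lies in every cone involved. Boundedness is only required for the stronger statement you invoke along the way, namely that $\sum_i(x_i,y_i)$ ranges over \emph{exactly} the convex hull (ruling out nontrivial recession contributions); that stronger statement underlies the paper's distance-minimization interpretation but is not needed for the observation itself.
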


%As a result, when using cut-coefficient normalization either \eqref{CG-SOCP} produces a disjunctive cut in case it is feasible, or we can deduce that all disjunctions are empty.

As a result of the investigation of the normalization, we know that Assumption~\ref{as:CGSOCPsolvable} is always satisfied with standard normalization. Unfortunately this is not the case with uniform and cut-coefficient normalization. We describe in Section~\ref{sec:implementationDetails} how we deal with this.

 \section{Solution methods using disjunctive cuts \label{sec:solution}}

We now present two solution methods based on \DCs: one applicable for the general bilevel program~\eqref{bilevel}, and one dedicated to a binary version of~\eqref{bilevel}.

\subsection{A branch-and-cut algorithm  %for \eqref{bilevel} 
\label{sec:bc}}

We propose to use the \DCs in a \BC algorithm to solve the bilevel program~\eqref{bilevel}. The \BC can be obtained by modifying any given continuous-relaxation-based \BB algorithm to solve the \HPR (assuming that there is an off-the-shelf solver for \ccrHPR that always returns an extreme optimal solution $(x^*,y^*)$ like e.g., a simplex-based \BB for a linear \ccrHPR 
\footnote{This assumption is without loss of generality, as we can outer approximate second-order conic constraints of $\mathcal{P}$ and get an extreme optimal point by a simplex method.}).

In particular, we adapt the \BB algorithm in the following way:
Use \ccrHPR as initial relaxation $\mathcal P$ at the root-node of the \BC. Whenever a solution $(x^*,y^*)$ which is integer is encountered in a \BC node, call the \DC separation. If a violated \DC is found, add the \DC  to the set $\mathcal P$
(which also contains, e.g., variable fixing by previous branching decisions, previously added globally or locally valid \DCs, \ldots)
of the current \BC node, otherwise the solution is feasible and the incumbent can be updated. 
Note that \DCs are only locally valid except the ones from the root node, because $\mathcal{P}$ includes branching decisions.
%Finiteness of this algorithm follows using similar arguments as in the proof of Theorem~\ref{th:finite}, a proof is omitted due to space reasons. 
If $\mathcal{P}$ is empty
or optimizing over $\mathcal{P}$ leads to an objective-function value that is greater than the objective-function value of the current incumbent, we fathom the current node.
%\blue{Also if a solution $(x^*,y^*)$ is encountered in a \BC node such that its objective-function value is larger than the objective-function value of the current incumbent, we fathom the node.}
In our implementation, we also use \DC separation for fractional $(x^*,y^*)$ as described in Section~\ref{sec:chooseyhat} for strengthening the relaxation. %In addition, for any heuristic solution we attempt to generate a violated DC. If this is not possible for a bilevel-infeasible solution, we discard the solution. 
%\todo{Ivana: 
%shall we mention that DCs are only locally valid (once we fix $x$ and $y$ variables due to branching, DCs are no longer globally valid, do you agree? 
%Also, we should mention that if the optimal value on $\mathcal{P}$ is bigger than the current incumbent, we fathom the node (standard \BB stuff, but needed for completeness). } 
%\todo{mention reject incumbent in the results section}
%\todo{infeasibility issue: if optimizing over $\mathcal{P}$ becomes infeasible we fathoming the node}
%\todo{Add reference for \BB?}

\begin{theorem}\label{th:finite}
%Algorithm~\ref{alg:cutting} 
The \BC solves the bilevel program~\eqref{bilevel} in a finite number of 
\BC-iterations under our assumptions.
\end{theorem}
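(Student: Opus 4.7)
The plan is to combine two finiteness arguments: the underlying branching tree is finite, and at every B\&C node only finitely many DCs are generated before the node is either fathomed or produces an incumbent update. First I will invoke Assumption~\ref{as:bound1}: since all variables are bounded integers, standard integer branching produces a finite enumeration tree in which every leaf corresponds to a sub-box small enough that no further branching on an integer variable is possible.

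The heart of the proof is a per-node termination claim. I fix a B\&C node and let $\mathcal{P}$ denote its running relaxation, which shrinks each time a DC is added. Suppose the solver returns an integer extreme optimal point $(x^*, y^*) \in \mathcal{P}$. I will first observe that, as $(x^*,y^*)$ is feasible for \ccrHPR with integrality, we have $y^* \in \mathcal{Y} \cap \mathbb{Z}^{n_2}$ and $Ax^* + By^* \geq f$, so $y^* \in F(x^*)$ and in particular $F(x^*) \neq \emptyset$. If $(x^*,y^*)$ is bilevel-infeasible, then by definition there exists $\hat{y} \in F(x^*)$ with $q(\hat{y}) < q(y^*)$, and Theorem~\ref{tm:getCut} together with Assumption~\ref{as:CGSOCPsolvable} yields a DC which separates $(x^*,y^*)$ from $\mathcal{D}(\hat y,\mathcal{P})$ and hence cuts off $(x^*,y^*)$ from $\mathcal{P}$. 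Otherwise $(x^*,y^*)$ is bilevel-feasible and the incumbent is updated.

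Since $\mathcal{P}$ sits inside the bounded box inherited from the node, it contains only finitely many integer points, and each new DC eliminates at least the current integer extreme point. Therefore only finitely many integer-point separations can occur at this node before one of the following happens: (i) $\mathcal{P}$ becomes infeasible, (ii) its optimal value exceeds the incumbent (so the node is fathomed), or (iii) the returned integer extreme point is bilevel-feasible. Combined with the finiteness of the branching tree, this gives termination in finitely many B\&C iterations.

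The main obstacle I anticipate is ensuring that the optional fractional-point separation does not spoil finiteness. Here the plan is to observe that fractional separation is used purely for strengthening: if it is capped (or skipped) per node, correctness and termination still follow from integer-point separation followed by branching. A minor technical check is that the addition of DCs preserves the second-order-conic structure of $\mathcal{P}$, so that the extreme-point oracle assumed for \ccrHPR and the separation of Theorem~\ref{tm:getCut} remain applicable at every iteration — a fact guaranteed by the footnote in the algorithm's description.
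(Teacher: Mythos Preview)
Your proof is correct and follows essentially the same approach as the paper: both arguments rest on Assumption~\ref{as:bound1} (bounded integers $\Rightarrow$ finite branching tree and finitely many integer points), Theorem~\ref{tm:getCut} (every bilevel-infeasible integer extreme point can be separated), and the fact that each added DC strictly shrinks the set of admissible integer points. Your per-node counting argument is a slightly more structured variant of the paper's global count of integer points across the tree, and your explicit check that $y^*\in F(x^*)$ (so a suitable $\hat y$ exists) is actually more careful than the paper's appeal to ``Theorem~\ref{tm:getCut} and the observations thereafter''.

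One difference in emphasis is worth noting: the paper's proof explicitly argues \emph{correctness} of the output by observing that the DCs, being valid for $\mathcal{D}(\hat y,\mathcal{P})$, never cut off a bilevel-feasible solution (or, under optimality-based removal, never cut off an improving one), so the incumbent at termination must be optimal. In your write-up this validity is implicit in the phrase ``separates $(x^*,y^*)$ from $\mathcal{D}(\hat y,\mathcal{P})$'', but since the theorem claims the B\&C \emph{solves}~\eqref{bilevel}, you should state explicitly that fathoming and cutting preserve all bilevel-feasible points in each subtree, so that the standard branch-and-bound optimality argument carries through. Your treatment of fractional separation (it is optional and capped per non-root node) is, if anything, more explicit than the paper's.
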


\begin{proof}

%First, we show that the \BC works correctly if the problem ~\eqref{bilevel} is infeasible. There are two potential sources of infeasibility: Either the 

First, suppose that the \BC terminates, but the integer solution $(x^*,y^*)$ is not bilevel feasible. This is not possible, as by Theorem~\ref{tm:getCut} and the observations thereafter, the \DC generation procedure finds a violated cut to cut off the integer point $(x^*,y^*)$ in this case. 

Next, suppose that the \BC terminates and the solution $(x^*,y^*)$ is bilevel feasible, but not optimal. This is not possible, because by construction, the \DCs never cut off any bilevel-feasible solution (or in case of optimality-based removal, any bilevel-feasible solution, which has a better leader objective-function value than the currently best-known solution) of the current subtree.

Finally, suppose that the \BC never terminates. This is not possible, as all variables are integer and bounded due to Assumption~\ref{as:bound1}, thus there is only a finite number of nodes in the \BC tree. Moreover, this means that there is also a finite number of integer points $(x^*,y^*)$, thus we solve the follower problem and $\eqref{CG-SOCP}$ a finite number of times.
%In each of these nodes we call the \DC separation at most once, therefore  
%\todo{finite number}
The follower problem is discrete and can therefore be solved in a finite number of iterations. 
\ifArXiV %---------------------------------------------------------
\else %--------------------------------------------------------------
\qed
\fi %--------------------------------------------------------------
%Moreover, due to our assumptions, the relaxation problem at each node and also the follower problem in  can be solved in a finite number of iterations. 
\end{proof}

\subsection{A cutting-plane algorithm for binary \IBNPs}
\label{sec:intcut}

The \DCs can be directly used in a cutting-plane algorithm under the following assumption. 
\begin{Assumption}\label{as:binary}
All variables in the bilevel program~\eqref{bilevel} are binary variables.
\end{Assumption}
The algorithm is detailed in Algorithm~\ref{alg:cutting}. 
It starts with the \HPR as initial relaxation of \VFR, which is solved to optimality. Then the chosen \DC separation routine (either \texttt{O} or \texttt{G}) is called to check if the obtained integer-optimal solution is feasible for constraint \eqref{eq:inequValueFunction}. If not, the obtained \DC is added to the relaxation to cut off the optimal solution, and the procedure is repeated with the updated relaxation. 

Due to Assumption~\ref{as:binary}, each obtained binary optimal solution is an extreme point of the convex hull of \ccrHPR, and thus due to Theorem~\ref{tm:getCut}, a violated cut will be produced by the \DC separation if the solution is not bilevel feasible.
Note that without Assumption~\ref{as:binary}, i.e., if variables are allowed to be integer and not just binary, an optimal solution may not be an extreme point of \ccrHPR. In this case, Theorem~\ref{tm:getCut} does not apply. Thus, we cannot guarantee that the \DC separation finds a violated cut. As a consequence, our proposed cutting-plane algorithm only works for binary instances.

\SetKwRepeat{Do}{do}{while}
\begin{algorithm}[tbh]
\LinesNumbered
\SetKwInOut{Input}{Input}\SetKwInOut{Output}{Output}
\Input{An instance of problem \eqref{bilevel} where all variables are binary} 
\Output{An optimal solution $(x^*,y^*)$} 
%\BlankLine 
{
$\mathcal R \gets$ \HPR; $\mathcal P \gets$ set of feasible solutions of \ccrHPR; $\texttt{violated} \gets True$\;
\Do{\texttt{violated}}{
$\texttt{violated} \gets False$\;
solve $\mathcal R$ to optimality, let $(x^*,y^*)$ be the obtained optimal solution\;
call \texttt{separation} for $(x^*,y^*)$ and $\mathcal{P}$ with strategy \texttt{O} or \texttt{G}\; 
\If{a violated cut is found for $(x^*,y^*)$}
{
$\texttt{violated} \gets True$; add the violated cut to $\mathcal R$ and to $\mathcal P$\;
}
}
\Return{$(x^*,y^*)$}
}
% \Return{WHAT\;}
\caption{\texttt{cutting-plane}\label{alg:cutting}}
\end{algorithm} 

\section{Computational analysis}
\label{sec:comp}

In this section, we present computational results to empirically compare methods and strategies proposed in Sections~\ref{sec:comp_methodology} and~\ref{sec:solution}. We also assess computational difficulties in the presence of multiple linking constraints, or in the 
presence of integer instead of binary variables. Finally, we compare our new DC-based branch-and-cut with the state-of-the-art \MIBLOP-solver \MIX~\cite{fischetti2017new,fischetti2018use}.

\subsection{Instances}
In our computations, we  consider two sets of instances: the quadratic bilevel covering problem (QBCov) instances (originally studied in~\cite{Gaar-et-al:2022} and extended here with multiple linking constraints) and a new additional set of quadratic bilevel multiple knapsack problem (QBMKP) instances derived from the SAC-94 library~\cite{sac94}. The instances are available at \url{https://msinnl.github.io/pages/instancescodes.html}.
All instances can be described as 
\begin{subequations}
\label{qcbp}
\begin{align}
&\min  ~ \exSymb{c}' x + \exSymb{d}'y  \\
\st &~\exSymb{M} x + \exSymb{N} y  \ge \exSymb{h} \label{qcbp:pureLeaderCon} \\
&y  \in \arg \min \{ y' \exSymb{R} y : \exSymb{A} x + \exSymb{B} y  \ge \exSymb{f}, ~y \in \{0,1\}^{n_2} 
\}\\
&x \in \{0,1\}^{n_1},\label{eq:BQKP} 
\end{align}
\end{subequations}
where 
$\exSymb{c} \in \R^{           n_1}$, 
$\exSymb{d} \in \R^{           n_2}$, 
$\exSymb{M} \in \R^{m_1 \times n_1}$, 
$\exSymb{N} \in \R^{m_1 \times n_2}$, 
$\exSymb{h} \in \R^{           m_1}$, 
$\exSymb{R} =\exSymb{V}'\exSymb{V} \in \Z^{n_2 \times n_2}$, 
%$\exSymb{a} \in \Z^{n_1}$, 
%$\exSymb{b} \in \Z^{n_2}$, and 
%$\exSymb{f} \in \Z$.
$\exSymb{A} \in \Z^{ m_2 \times  n_1}$, 
$\exSymb{B} \in \Z^{m_2 \times n_2}$, and 
$\exSymb{f} \in \Z^{m_2}$.

\paragraph{The QBCov instances.} 
In this setting, we chose $m_2=1$, in which case the problem can be seen as the covering-version of the quadratic bilevel knapsack problem studied by Zenarosa et al.\ in~\cite{zenarosa2021exact}. Indeed, \cite{zenarosa2021exact} considers a single leader variable ($n_1=1$) and no coupling constraints at the leader ($m_1=0$), and with a quadratic non-convex leader objective function. 
%\eqref{qcbp:pureLeaderCon}). 
The linear variant of such a bilevel knapsack-problem is studied in, e.g., \cite{BrotcorneHM09,Brotcorne}. We note that~\cite{BrotcorneHM09,Brotcorne,zenarosa2021exact} propose problem-specific solution approaches.

We generated  40 random instances in the following way. We considered $n_1 = n_2$ for $n_1+n_2 = n\in \{20, 30, 40, 50\}$, and we study instances with no (as in~\cite{zenarosa2021exact}) and with one leader constraint \eqref{qcbp:pureLeaderCon}, so $m_1 \in \{0,1\}$. For each $n$, we created five random instances for each $m_1 \in \{0,1\}$.
Furthermore, we chose all entries of $\exSymb{c}$, $\exSymb{d}$, $\exSymb{M}$, $\exSymb{N}$, $\exSymb{A}$, and $\exSymb{B}$ uniformly at random from $\{0,1,\dots,99\}$. The values of $\exSymb{h}$ and $\exSymb{f}$ (which are scalars for these instances) were set to the sum of the entries of the corresponding rows in the constraint matrices divided by four.
The matrix $\exSymb{V}\in \R^{n_2 \times n_2}$ has integer entries chosen uniformly at random from the set $\{0,1,\dots,9\}$. We extended this data set from~\cite{Gaar-et-al:2022} with 40 new instances generated in the same way, by choosing $m_2=2$.

\paragraph{The QBMKP instances.}
These instances were derived from the multiple knapsack problem (MKP) instances from SAC-94 library~\cite{sac94} which is a benchmark library containing 0/1 MKP instances. From there we chose 50 instances and
%~\cite{freville1990hard} (instances \texttt{hp*} and \texttt{pb*}), \cite{petersen1967computational} (instances \texttt{pet*}), 
% %\cite{senju1968approach} (instances \texttt{sento*})\todo{K:we don't use sento},
%~\cite{shih1979branch} (instances \texttt{weish*}), and~\cite{weingartner1967methods} (instances \texttt{weing*}). 
generated 300 new instances of the QBMKP as follows.
%\todo{Kuebra: please double check}

The instances have 2 to 10 constraints and 10 to 105 items. For each instance of this data set, we first constructed two different QBMKP instances by keeping all but the last $m_2$ constraints at the leader problem, where $m_2 \in \{1,2\}$. The first 50 or 75\% of items are associated to leader variables $x$, the remaining ones are associated to follower variables $y$. The coefficients of the original MKP objective function are assigned to the leader.
%\todo{with a min direction}. 
Each budget constraint of the starting MKP, say $a'x + b'y \le f$, 
%\todo{is $\exSymb{f}$ okay here, we already have $c$} 
is translated into a covering constraint of type \eqref{qcbp:pureLeaderCon} as 
$a'x + b'y \ge e'a + e'b - \exSymb{f}$ (where $e$ is the vector of all ones). To generate the positive semidefinite matrix $\hat R$ determining the follower objective function, we follow a procedure proposed in~\cite{kleinert2021outer}.
We first randomly generated quadratic matrices $V$ of suitable size whose entries are chosen uniformly at random from $\{-\sigma, \dots, \sigma\}$  where $\sigma :=  \lceil \sqrt[4]{||\exSymb{d}||_{\infty}} \rceil$ and then set $\hat R:= V'V$. This allows to keep the order of magnitude for the coefficients of the objective function of the follower similar to those of the leader. Following this procedure, we obtained 200 binary instances, 100 with one linking constraint and 100 with two linking constraints.

Lastly, we generated integer instances where $m_2=1$, all decision variables take value in $\{0,\ldots, 5\}$, and the right-hand side of the previously generated covering constraints are multiplied by two. Together with the two choices of variable assignments to the leader and the follower problems, we obtained 100 integer QBMKP instances. 
%\todo{Kubra: first linearization, then integer instance description}

%\todo{how many binary with single linking, binary with multiple linking, and integer?}

\paragraph{Linearization of instances.}

The structure of~\eqref{qcbp} allows for an easy linearization of the convex nonlinear terms in the binary instances % follower objective function
using a standard McCormick linearization to transform the starting problem into an \MIBLOP. This allows us to compare the performance of our algorithm against a state-of-the-art \MIBLOP-solver \MIX from Fischetti et al.~\cite{fischetti2017new,fischetti2018use}.  

%\red{to get a first impression of whether our development of a dedicated solution approach for \IBNPs exploiting nonlinear techniques is a promising endeavour.}\footnote{Move to conclusions}

\subsection{Computational environment}

All experiments were executed on a single thread of an Intel Xeon E5-2670v2 machine with 2.5 GHz processor 
with a memory limit of 10 GB and a time limit of 600 seconds.
Our \BC algorithm and our cutting-plane algorithm both are implemented in C++. They make use of 
IBM ILOG CPLEX 12.10 (in its default settings, except for disabling presolve so that we can access the original \HPR formulation, setting the MIP gap tolerance to zero, and  running it single-threaded) as branch-and-cut framework in our \BC algorithm and as solver for $\mathcal R$  in our cutting-plane algorithm. 
During the \BC, CPLEX's internal heuristics are allowed and a bilevel-infeasible heuristic solution is just discarded if a violated cut cannot be obtained. For calculating the follower solution $\hat y$ for a given $x^*$, we also use CPLEX. For solving \eqref{CG-SOCP}, we use MOSEK~\cite{mosek} 9.2 in its default settings, except for running it always  single-threaded and using always the primal solver to avoid numerical issues. The solver \MIX against which we compare was run with CPLEX 12.9, which is the newest CPLEX version compatible with this solver.

\subsection{Implementation details}
\label{sec:implementationDetails}

\paragraph{Update of \texorpdfstring{$\mathcal{P}$}{P}.}
For both the B\&C and the cutting-plane algorithm, we start with \ccrHPR as initial $\mathcal{P}$ and do not update it with dynamically added \DCs. This is in line with the recent implementation of \DCs with split-cuts for mixed-integer \SOCP in~\cite{Lodi-et-al:2020}, and prevents potential numerical instabilities. 
However, in the \BC we update $\mathcal{P}$ with the local %node-bounds of 
variable bounds at the current node in the \BC-tree. Thus the obtained \DCs are only locally valid (i.e., in the current subtree) and are added as \emph{locally valid cuts}. Doing this is numerically safe, as variable bound constraints are already present in the original problem and they just need to be updated, i.e., the number of constraints remains the same.

We note that technically, in Theorem~\ref{tm:getCut}, we need that the current $\mathcal{P}$ incorporates all added \DCs and the local %node-bounds
variable bounds to ensure that $(x^*,y^*)$ is an extreme point, and thus can be separated in case it is bilevel infeasible. However, in our computational experiments, we never encountered any issues when not including previous \DCs in $\mathcal{P}$. On the other hand, updating $\mathcal P$ with the local bounds was crucial to make the separation work for integer instances. 

\paragraph{Solving the follower problem to obtain $\hat{y}$.}
During the  separation of both integer and fractional points $(x^*,y^*)$, while solving the follower problem, we make use of the follower objective-function value $q(y^*)$, by setting it as an upper cutoff value. This is a valid approach because a violated \DC exists only if $\Phi(x^*)<q(y^*)$. 

Furthermore, we use the solution-pool feature of CPLEX. This means that CPLEX keeps feasible solutions obtained in previous iterations and tries to use them as initial solution. % in case they are also feasible for the current follower problem. 
As a consequence, when using the separation option \texttt{G}, CPLEX may not need to start the solution process, as a solution from the solution-pool might be feasible for the current follower problem.

\paragraph{Checking for redundant disjunctions.}
{
%\todo{Elli: is it really such a good idea to emphasise that we only have an LP (and not SOCP constraints in the leader)? If we would have that, the check would not be an LP anymore.}

In our implementation, we only check the redundancy of linear disjunctions, i.e., $\mathcal{D}_i(\hat y)$ for $i=1,\ldots, m_2$, because
%Firstly, for the problems we choose, the problem to be solved to check (relaxation-based) redundancy is an LP when checking the linear disjunctions, and a QP when checking the objective disjunction $\mathcal{D}_0(\hat y)$. Secondly, 
we observed in the preliminary tests that the objective disjunction is almost never redundant as $(x^*,\hat{y})$ usually gives a feasible solution to $\mathcal{P} \cap \mathcal{D}_i(\hat y)$.

When solving the problems to detect redundancy, we keep the original leader objective function and define the feasible region as $\mathcal{P} \cap \mathcal{D}_i(\hat y)$ or $\mathcal{P} \cap \mathcal{D}_i(\hat y)\cap \mathbb{Z}^n$ together with the local variable bounds which will be used to obtain a \DC. 
%We set a solution limit of one, as we only need a proof of infeasibility for removing a disjunction. 
For optimality-based removal, we solve the problem with an upper cutoff value $z^*-10^{-5}$, where $z^*$ is the current leader incumbent objective value. Note that redundancy of a disjunction is indicated by infeasibility of the corresponding detection problem. Thus as soon as CPLEX finds a feasible solution for a detection problem, we know that the disjunction is not redundant. Hence, we set the CPLEX parameter \texttt{solution limit} to one.

}

\paragraph{Solving \eqref{CG-SOCP}.}
%Moreover, all DCs generated in the B\&C fashion are added as \emph{locally valid cuts}. Therefore, before calling the \eqref{CG-SOCP}, we include variable local bounds into the description of $\mathcal{P}$. 
To avoid numerical issues, whenever a coefficient of a DC is close enough to zero (i.e.,  absolute value less than $5\cdot10^{-6}$), we round it  to zero and adapt the right-hand side of the DC to maintain a valid cut. 

Unless mentioned differently, we use standard normalization with the $2$-norm, where Assumption~\ref{as:CGSOCPsolvable} is satisfied. 
{Whenever the dual of \eqref{CG-SOCP} is infeasible in the case of uniform normalization, we take $\alpha$, $\beta$ and $\tau$ from an unbounded ray of the primal (which is provided by MOSEK) as \DC. To prevent numerical issues, we scale any unbounded ray in such a way that $\| (\alpha, \beta)\|_2 = 1$ holds.
If the dual of \eqref{CG-SOCP} with cut-coefficient normalization is infeasible, all disjunctions are empty as described in Section~\ref{sec:normalization}. Thus we add the always violated cut $\alpha = 0$, $\beta=0$ and $\tau = 1$ in this case.}

\paragraph{Separation.}

%\todo{explain redundant implementation, branching?}
%        \item turn rounding on
%        \item MOSEK always primal
%        \item nodebounds always 1, because they are needed for the integer anyway, and they do not make so much difference for binary instances
%        \item \textcolor{green}{branching preference on x variables}

We %determine 
set the minimum acceptable cut violation $\varepsilon$ described in Section~\ref{sec:separation} to $10^{-6}$ for our experiments.
We control the number of cuts added for separating fractional points as follows. At the root node of the B\&C tree, we add as many cuts as needed, i.e., we check if we are able to cut off the current point with a \DC until no violated cut can be obtained. At all other nodes, we add at most one \DC and then proceed to branching, as the separation procedure could be time consuming. %\todo{please provide numbers}

Finally, in our \BC implementation, we also have to deal with integer solutions that are produced by the internal heuristics of CPLEX. In this case, we do not necessarily have a useful $\mathcal P$ for separation at hand. Thus, if the produced heuristic solution is bilevel infeasible and we fail to cut it off with a \DC, we just use the \texttt{reject}-feature of CPLEX to reject this solution (this prevents CPLEX from updating the incumbent with the heuristic solution).

\subsection{Numerical results}

%\todo{Kuebra, Markus: charts (tables maybe appendix)}
We start by assessing the performance of the B\&C approach, and  by evaluating how the choice of separation strategy, removal of redundant disjunctions, and normalization affect the overall performance.  
We then compare the B\&C against two alternatives: the cutting-plane method described in Section~\ref{sec:solution} and the state-of-the-art \MIBLOP solver \MIX from~\cite{fischetti2017new,fischetti2018use}. 
We conducted these experiments on {140} instances from our benchmark set that contain only binary variables, and a single linking constraint. Finally, we extended the benchmark set, and we also demonstrate the performance of our B\&C when applied to instances with multiple linking constraints, and with integer variables.

\paragraph{{Performance of different ingredients of the B\&C algorithm.}}
We discussed different separation procedures  in Section~\ref{sec:chooseyhat}. 
While executing our B\&C algorithm, we consider four different settings for the separation of cuts:
\begin{itemize}
    \item \texttt{IO}: only integer solutions are separated using  strategy \texttt{O},
    %for the \texttt{separation} routine, 
  %  this is the \emph{base setting} in which only integer   points $(x^*,y^*)$ are separated using an optimal follower solution, $\hat y \in \Omega(x^*)$ (strategy \texttt{O} for the \texttt{separation} routine), %\todo{dots even though no full sentences?}
    \item \texttt{IFO}: both integer and fractional solutions are separated using strategy \texttt{O},
    %for the \texttt{separation} routine, 
    \item \texttt{IG}:  only integer solutions  are separated using  strategy \texttt{G},
    %for the \texttt{separation} routine, 
    %only integer solutions are separated using the {first feasible solution $\hat y \in F(x^*)$ found such that $q(\hat y) < q(y^*)$ when solving the follower problem (strategy \texttt{G}} for the \texttt{separation} routine),
    \item \texttt{IFG}: both integer and fractional solutions are separated using  strategy \texttt{G}.
    %for the \texttt{separation} routine. 
\end{itemize} 

\begin{figure}[tbp]
\caption{ECDFs reporting runtimes and final gaps for four different separation strategies of the B\&C, over binary instances with one linking constraint.
\label{fig:separation}
%Final (up to legend) Plots: Experiment 1, choosing Sep option, for all binary, one disjunction instances (=140 instances started, 110 remaining after removing unknown status instances)
}
\begin{center}

\includegraphics[width=0.5\textwidth]{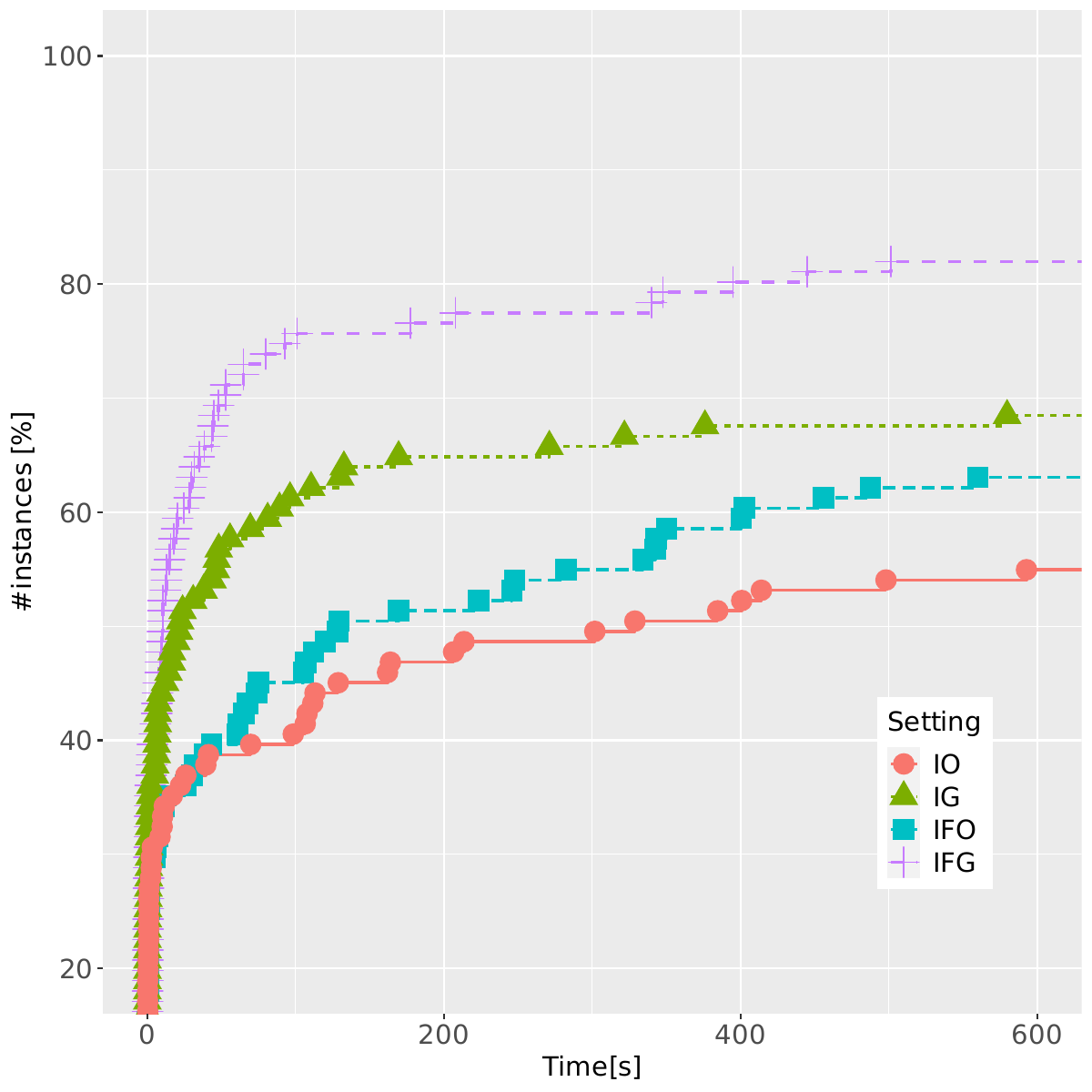}%
\includegraphics[width=0.5\textwidth]{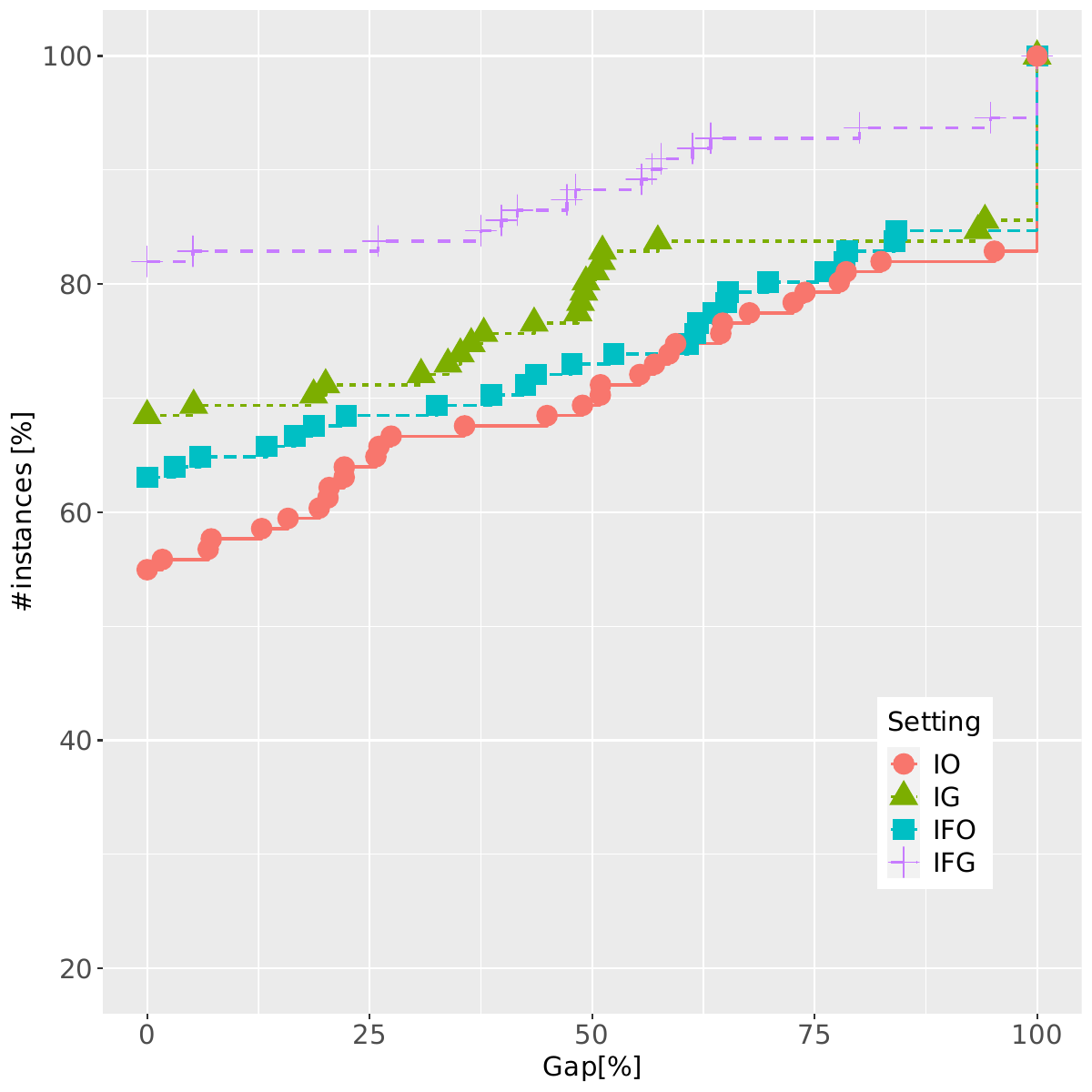}%
\end{center}
\end{figure}

In Figure~\ref{fig:separation} we compare these four settings for the B\&C. We show the empirical cumulative distribution functions (ECDFs) w.r.t. the runtimes and final gaps.
The gaps are defined in the following way.
If an instance is proven to be infeasible, we define the gap to be zero.
Otherwise, if no feasible solution is found, we define the gap to be 100.
If a feasible solution is found, then 
the gap is calculated as $100 (z^*-LB)/z^*$, where $z^*$ and $LB$ are the best-known objective-function value of a feasible solution and the lower bound, respectively. 
Note that by the construction of our instances, the value zero is a trivial lower bound for all feasible instances. Thus the gaps will always range between zero and 100.
The ECDFs with e.g., runtimes can be interpreted as the percentage of instances (shown in $y$-axis) that can be solved within a certain amount of time (depicted in the $x$-axis). In order to have a fair comparison, out of 140 instances from this benchmark set, we only consider 111 instances for which at least one of the methods was either able to find a feasible solution, or to prove  infeasibility (for 29 instances from QBMKP, the feasibility status remains unknown). We observe that the best-performing setting is 
%the one in which both integer and fractional points are separated using the greedy separation routine (setting \texttt{IFG}). 
\texttt{IFG}.
This can be explained by the fact that non-optimal follower solution may also provide a strong DCs (cf.\ Theorem~\ref{th:nondominance}) and by the significant savings in separation time (as we are avoiding to solve the follower problem to optimality). {Our preliminary results reported in~\cite{Gaar-et-al:2022} did not identify  \texttt{IFG} as the best setting, because there was no control mechanism implemented to limit the number of separated DCs for each fractional point, which may cause overloading of the master problem.
This is now regulated as described in Section~\ref{sec:implementationDetails}.
%This is now regulated by adding only a small number of violated cuts at fractional points. 
}

\begin{figure}[tbp]
\caption{
%Final (up to legend) Plots: Experiment 2, choosing discard redundant, for all binary, one disjunction instances (=140 instances) 
ECDFs reporting runtimes and final gaps for four strategies for the removal of redundant disjunctions, over binary instances with one linking constraint. \label{fig:removal}}
\begin{center}

\includegraphics[width=0.5\textwidth]{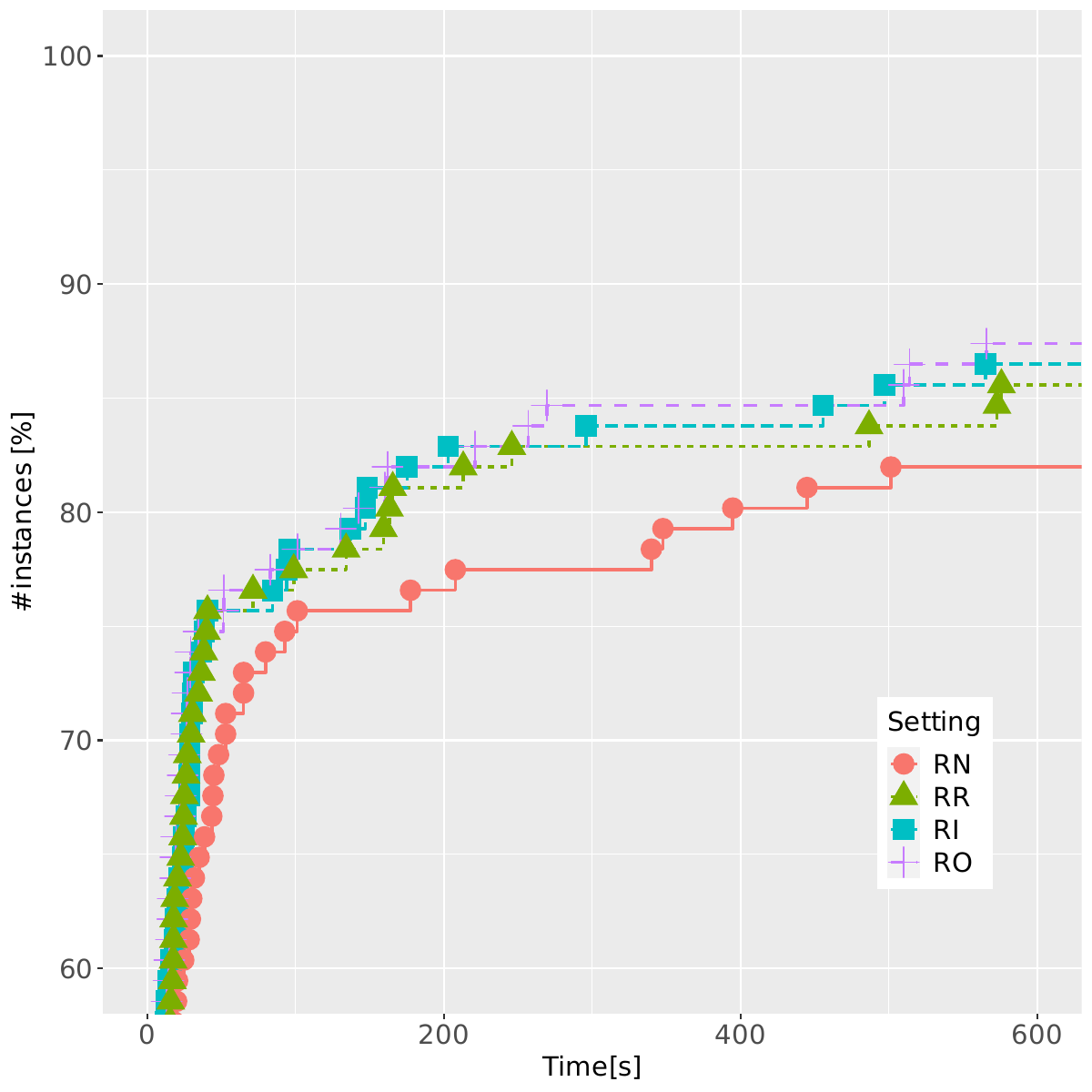}%
\includegraphics[width=0.5\textwidth]{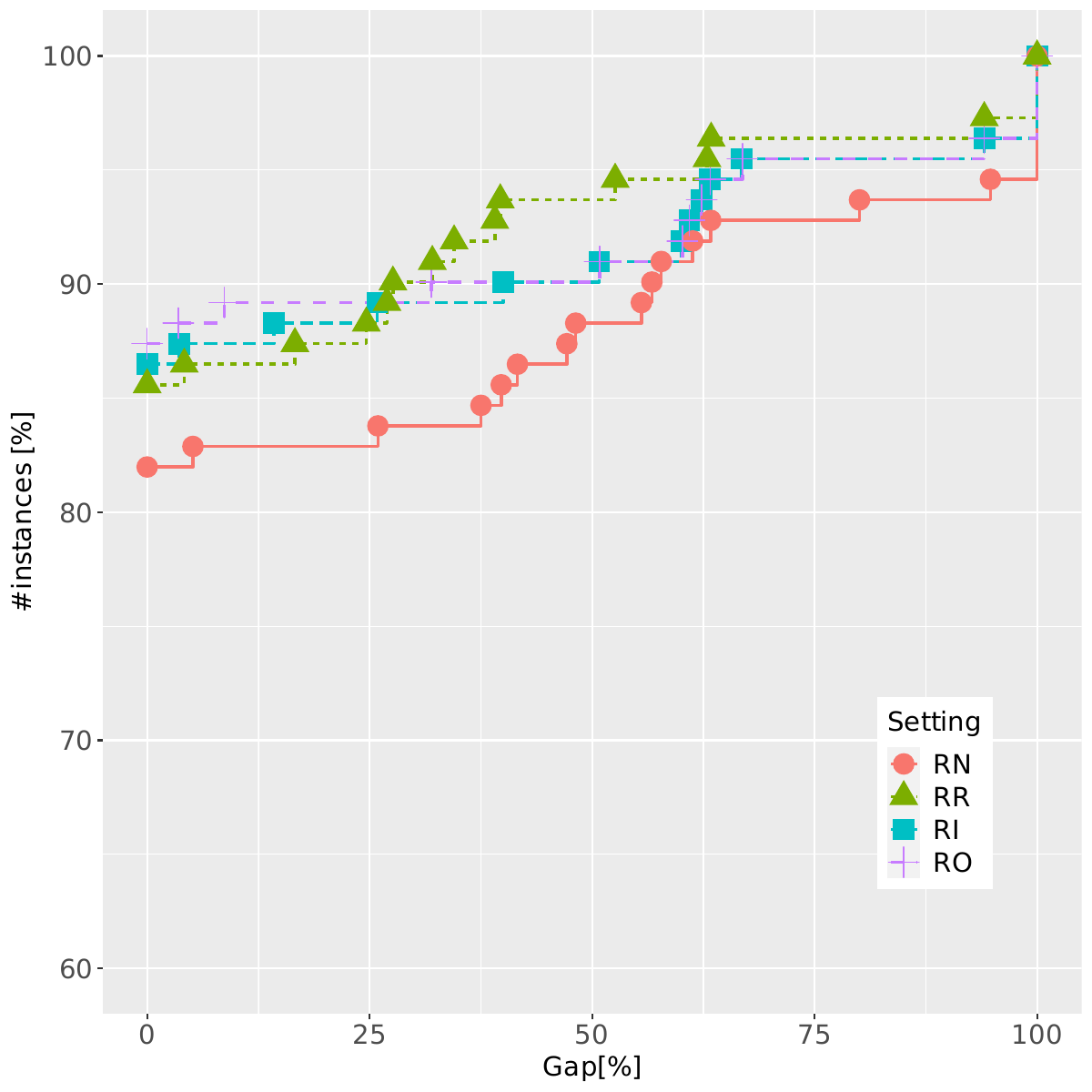}%
\end{center}
\end{figure}

In what follows, we continue with the setting \texttt{IFG}, and investigate how the potential removal of redundant disjunctions (discussed in Section~\ref{sec:removal}) affects the overall performance. Two additional ECDFs are reported in Figure~\ref{fig:removal} for the settings denoted as 
\texttt{RN} (no removal of redundant disjunctions), \texttt{RR} (relaxation-based removal), \texttt{RI} (integrality-based removal), and \texttt{RO} (optimality-based removal). We notice that the runtime can be improved, even when only the simplest strategy \texttt{RR} is applied. Also, the other more computationally-expensive removal strategies do not deteriorate the runtime, and in particular, they help to significantly improve the final gaps. For example, when including the strongest strategy \texttt{RO}, for 90\% of the instances, the final gap remains below 10\%, whereas without the removal, the respective gap can be as large as 50\%. Even though for the few most difficult instances, the best final gaps are obtained when using the \texttt{RR} strategy, we decided to continue with the rest of experiments using \texttt{RO} as a more stable and robust setting. 

\begin{figure}[tbp]
\caption{%Final (up to legend) Plots: Experiment 3, choosing norm, for all binary, one disjunction instances (=140 instances); N1 = standard 1-norm, N2 = standard 2-norm, N3 = uniform 1-norm, N4 = uniform 2-norm, N5 = cut-coeff 1-norm, N6 = cut-coeff 2-norm
ECDFs reporting runtimes and final gaps for six different normalization strategies for~\eqref{CG-SOCP}, over binary instances with one linking constraint. \label{fig:norm}} 
\begin{center}
\includegraphics[width=0.5\textwidth]{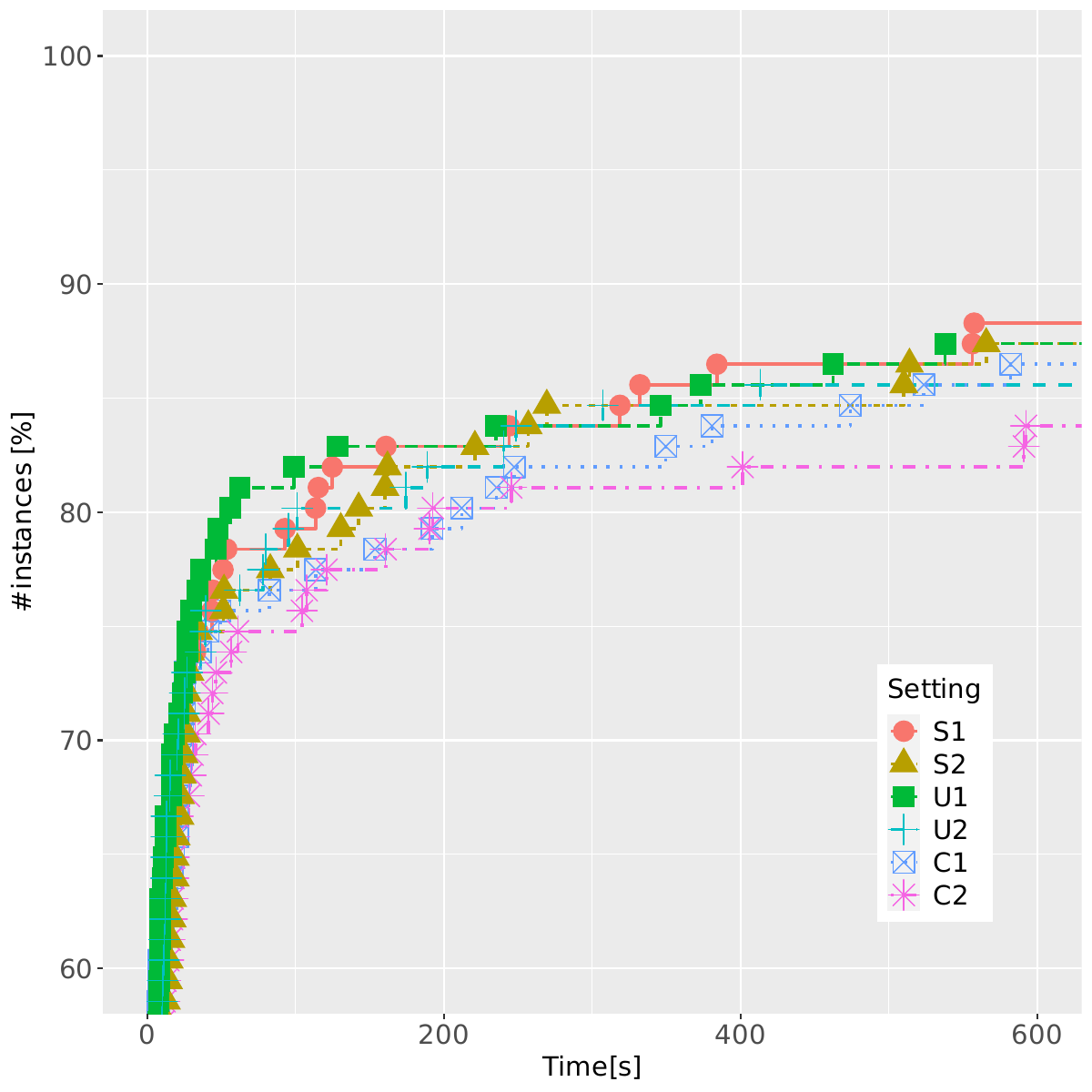}%
\includegraphics[width=0.5\textwidth]{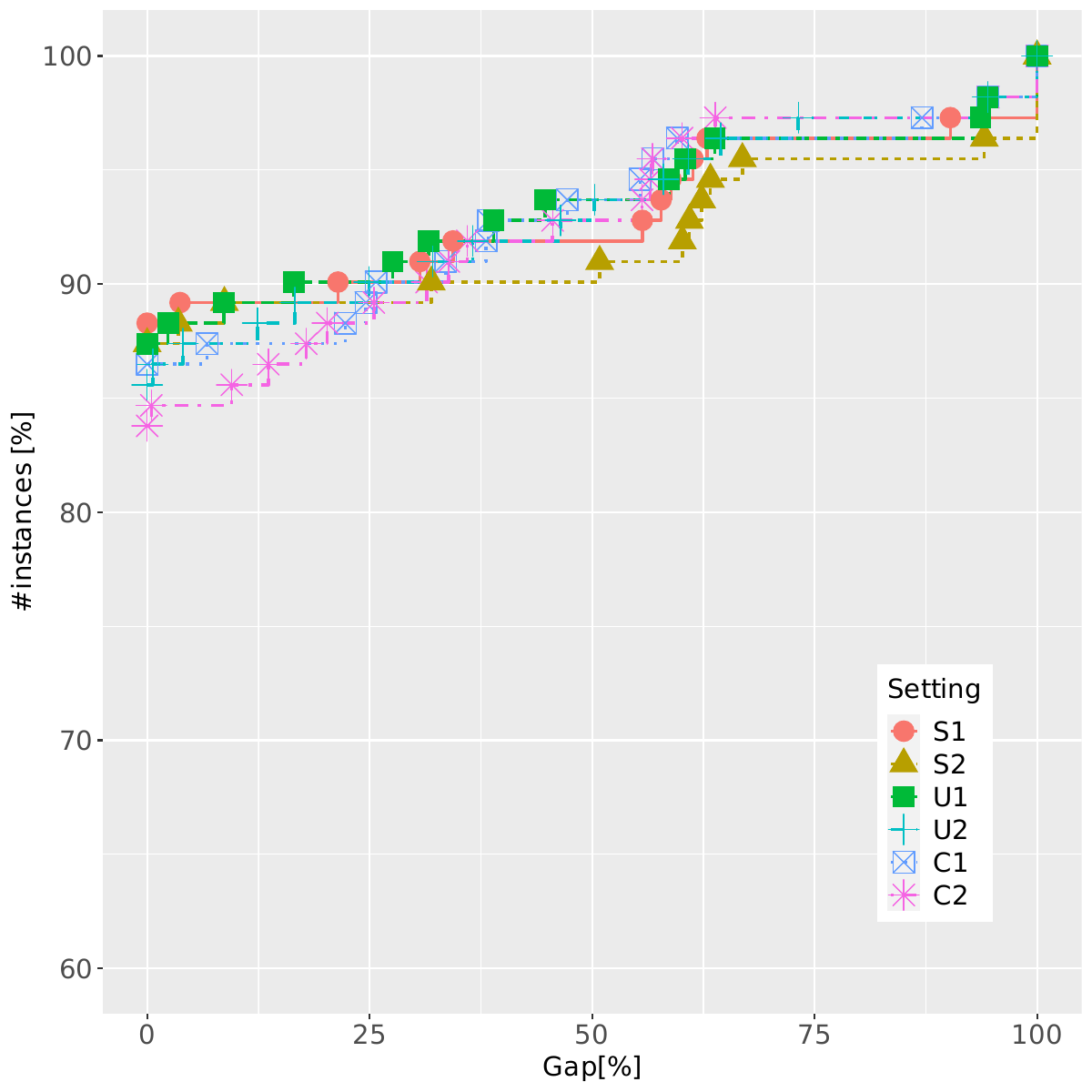}%

\end{center}
\end{figure}

Finally, when focusing on possible enhancement of the B\&C procedure, we look at the effectiveness of normalization strategies for~\eqref{CG-SOCP} presented in Section~\ref{sec:normalization}. The corresponding ECDFs are shown in Figure~\ref{fig:norm}. The letters \texttt{S}, \texttt{U} and \texttt{C} stand for standard, uniform and cut-coefficient normalization strategies, respectively, followed by $p \in \{1,2\}$ denoting the type of norm used. We observe that all normalization strategies perform very similarly, with two consistent trends: the worst-performing one is \texttt{C} (which is also in line with the known results from the literature on DCs, see, e.g.,~\cite{Lodi-et-al:2020}), and the 2-norm is always outperformed by the 1-norm. The latter can be explained by the sparsity and better numerical stability of cuts produced using the 1-norm. 
Thus, based on these experiments, we decide to use the strategy  \texttt{IFG} combined with \texttt{RO} and \texttt{S1} as our best %\todo{should we write "best" here?} 
setting (denoted as \texttt{BC-best} in the following). We denote by \texttt{BC-base} the setting where \texttt{IO} is combined with \texttt{RN} and \texttt{S2} 
%for~\eqref{CG-SOCP}
(the setting which was also used in our earlier study in~\cite{Gaar-et-al:2022}).

\begin{figure}[tbp]
\caption{ECDFs reporting runtimes and final gaps for the base and the best versions of the B\&C and cutting-plane methods, and the MIBLP solver, over  binary instances with one linking constraint.
\label{fig:method_comparison}}
\begin{center}

\includegraphics[width=0.5\textwidth]{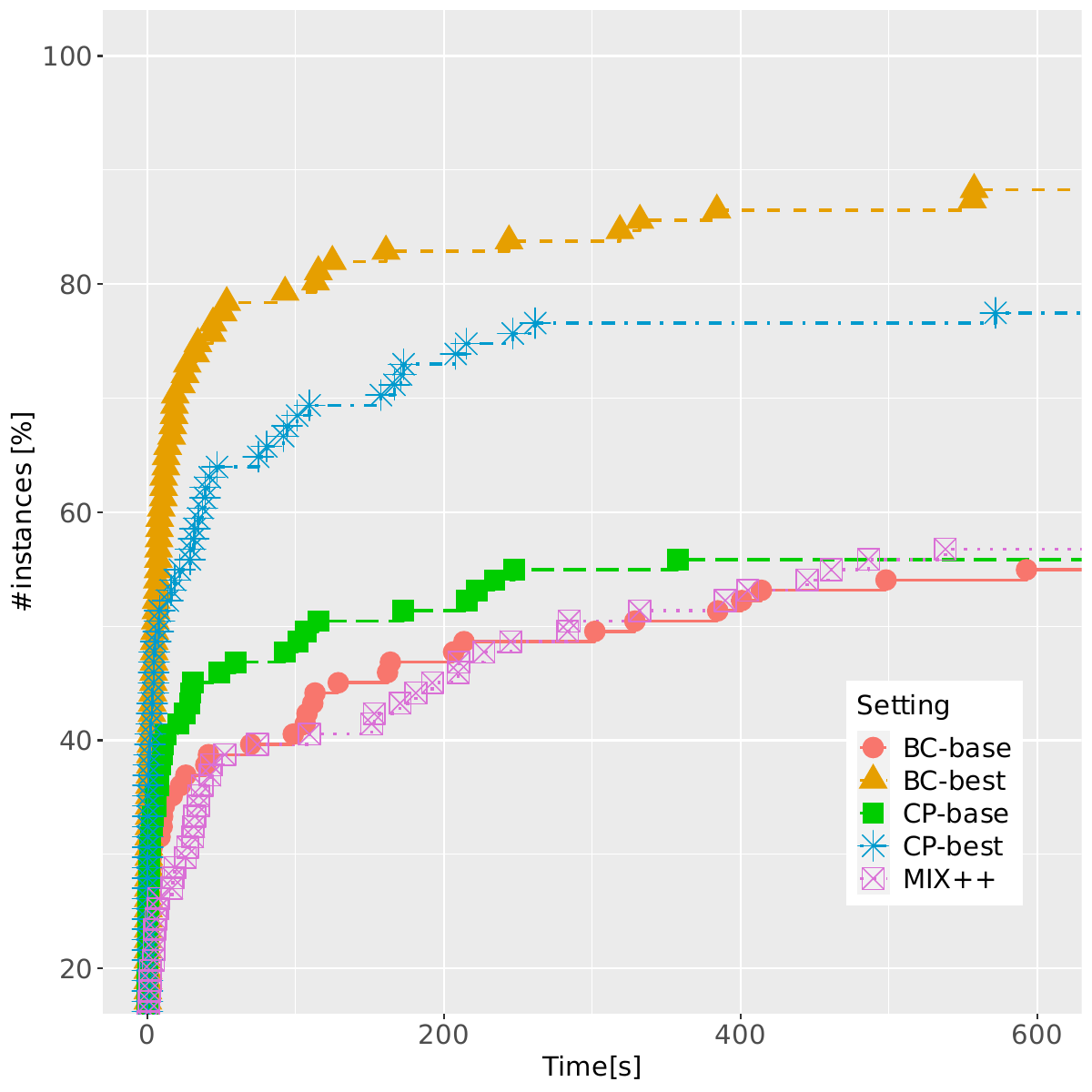}%
\includegraphics[width=0.5\textwidth]{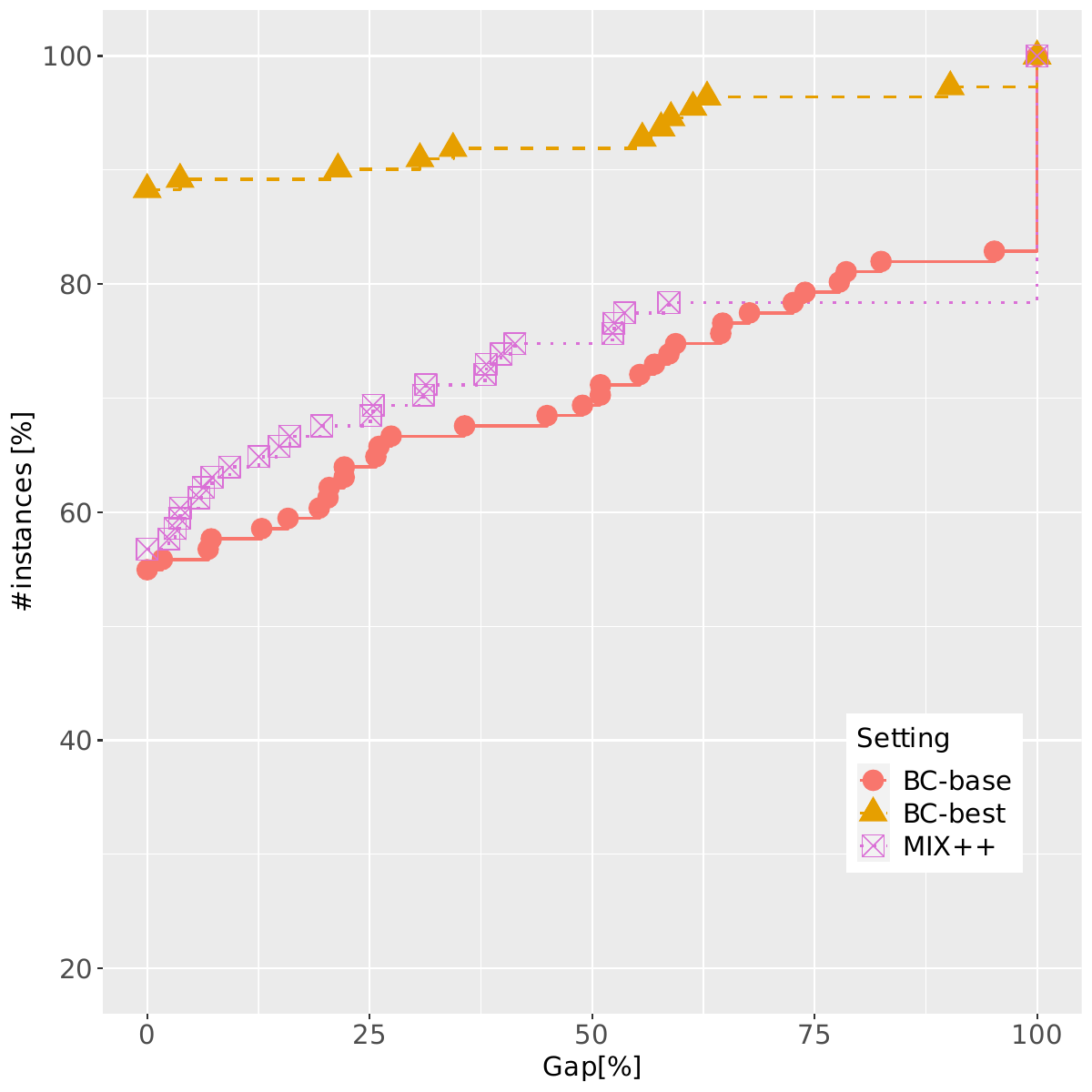}%
\end{center}
\end{figure}

\begin{table}[tbp]
\centering \setlength{\tabcolsep}{3pt}
\caption{Average results of five methods over 40 QBCov instances with one linking constraint.
\label{tab:method_comparison_cover}
}
\begingroup\scriptsize
\begin{tabular}{@{}rrrrrrrrrrrrrr@{}}
  \hline
  \multicolumn{1}{c}{$n$} & \multicolumn{1}{c}{\texttt{Setting}} & \multicolumn{1}{c}{$t$} & \multicolumn{1}{c}{\texttt{Gap}} & \multicolumn{1}{c}{\texttt{Gap*}} & \multicolumn{1}{c}{\texttt{RGap}}  & \multicolumn{1}{c}{\texttt{RGap*}} & \multicolumn{1}{c}{\texttt{nNode}} & \multicolumn{1}{c}{\texttt{nICut}} & \multicolumn{1}{c}{\texttt{nFCut}} & \multicolumn{1}{c}{\texttt{nRed}} & \multicolumn{1}{c}{$t_F$} & \multicolumn{1}{c}{$t_S$} & \multicolumn{1}{c}{\texttt{nSol}} \\ \hline

  \multirow{5}[2]{*}{20} & \texttt{BC-base} & 2.0 & 0.0 & 0.0 & 56.0 & 16.2 & 165.8 & 61.9 & - & - & 1.3 & 0.4 & 10/10 \\ 
   & \texttt{BC-best} & 1.1 & 0.0 & 0.0 & 66.6 & 16.4 & 93.0 & 17.1 & 46.1 & 40.3 & 0.4 & 0.3 & 10/10 \\ 
   & \texttt{CP-base} & 1.0 & - & 0.0 & - & - & - & 15.3 & - & - & 0.3 & 0.2 & 10/10 \\ 
   & \texttt{CP-best} & 0.2 & - & 0.0 & - & - & - & 10.6 & - & 1.0 & 0 & 0.1 & 10/10 \\ 
   & \texttt{MIX++} & 2.2 & 0.0 & 0.0 & 14.5 & 14.5 & 25.0 & - & - & - & - & - & 10/10 \\ 
   \hline \multirow{5}[2]{*}{30} & \texttt{BC-base} & 154.8 & 2.6 & 0.9 & 70.5 & 15.1 & 4461.1 & 1879.9 & - & - & 143.8 & 10 & 9/10 \\ 
   & \texttt{BC-best} & 3.6 & 0.0 & 0.0 & 53.4 & 14.0 & 507.8 & 32.8 & 285.9 & 246.8 & 1.5 & 1.7 & 10/10 \\ 
   & \texttt{CP-base} & 70.7 & - & 0.0 & - & - & - & 131.3 & - & - & 12.5 & 0.8 & 10/10 \\ 
   & \texttt{CP-best} & 16.7 & - & 0.0 & - & - & - & 77.4 & - & 1.1 & 0.3 & 0.4 & 10/10 \\ 
   & \texttt{MIX++} & 116.5 & 0.0 & 0.0 & 15.6 & 15.6 & 300.2 & - & - & - & - & - & 10/10 \\ 
  \hline \multirow{5}[2]{*}{40} & \texttt{BC-base} & 305.4 & 16.3 & 7.7 & 100.0 & 21.1 & 5717.0 & 2058.4 & - & - & 289.4 & 14.7 & 6/10 \\ 
   & \texttt{BC-best} & 25.0 & 0.0 & 0.0 & 90.0 & 18.9 & 3282.1 & 196.2 & 1759.9 & 1629.5 & 11.0 & 12.7 & 10/10 \\ 
   & \texttt{CP-base} & 256.2 & - & 2.3 & - & - & - & 249.2 & - & - & 52.6 & 1.9 & 6/10 \\ 
   & \texttt{CP-best} & 117.6 & - & 0.9 & - & - & - & 182.4 & - & 0.7 & 0.9 & 1.2 & 9/10 \\ 
   & \texttt{MIX++} & 456.9 & 20.2 & 8.7 & 32.2 & 21.8 & 368.2 & - & - & - & - & - & 4/10 \\ 
  \hline \multirow{5}[2]{*}{50} & \texttt{BC-base} & 551.2 & 46.9 & 27.8 & 100.0 & 36.2 & 7513.8 & 2618.2 & - & - & 526.4 & 23.2 & 2/10 \\ 
   & \texttt{BC-best} & 233.6 & 3.1 & 2.8 & 100.0 & 33.5 & 23479.3 & 1073.1 & 14859.2 & 9488 & 95.5 & 128.1 & 9/10 \\ 
   & \texttt{CP-base} & 549.3 & - & 17.9 & - & - & - & 418.8 & - & - & 153.8 & 3.9 & 1/10 \\ 
   & \texttt{CP-best} & 423.1 & - & 11.8 & - & - & - & 374.7 & - & 0.5 & 2.0 & 3.1 & 4/10 \\ 
   & \texttt{MIX++} & 600.0 & 53.1 & 31.6 & 57.8 & 38.1 & 245.5 & - & - & - & - & - & 0/10 \\ 
   \hline
\end{tabular}
\endgroup
\end{table}

\begin{table}[tbp]
\centering \setlength{\tabcolsep}{3pt}
\caption{Average results of five methods over %100 (
71  QBMKP instances with one linking constraint.
\label{tab:method_comparison_MKP}
}
\begingroup\scriptsize
\begin{tabular}{@{}rrrrrrrrrrrrrr@{}}
  \hline
  \multicolumn{1}{c}{$n_1/n$} & \multicolumn{1}{c}{\texttt{Setting}} & \multicolumn{1}{c}{$t$} & \multicolumn{1}{c}{\texttt{Gap}} & \multicolumn{1}{c}{\texttt{Gap*}} & \multicolumn{1}{c}{\texttt{RGap}}  & \multicolumn{1}{c}{\texttt{RGap*}} & \multicolumn{1}{c}{\texttt{nNode}} & \multicolumn{1}{c}{\texttt{nICut}} & \multicolumn{1}{c}{\texttt{nFCut}} & \multicolumn{1}{c}{\texttt{nRed}} & \multicolumn{1}{c}{$t_F$} & \multicolumn{1}{c}{$t_S$} & \multicolumn{1}{c}{\texttt{nSol}} \\ \hline
  \multirow{5}[2]{*}{0.50} & \texttt{BC-base} & 417.5 & 34.2 & 23.8 & 85.8 & 30.0 & 5847.7 & 2497.0 & - & - & 403.4 & 13.0 & 11/26 \\ 
   & \texttt{BC-best} & 71.2 & 3.6 & 3.6 & 78.6 & 13.6 & 5151.6 & 342.3 & 6415.3 & 4874.7 & 31.0 & 36.4 & 24/26 \\ 
   & \texttt{CP-base} & 357.2 & - & 20.7 & - & - & - & 257.4 & - & - & 131.2 & 5.3 & 12/26 \\ 
   & \texttt{CP-best} & 155.3 & - & 19.2 & - & - & - & 164.5 & - & 39.8 & 0.8 & 0.9 & 19/26 \\ 
   & \texttt{MIX++} & 341.2 & 20.4 & 14.3 & 35.9 & 22.4 & 199.5 & - & - & - & - & - & 16/26 \\ 
  \hline \multirow{5}[2]{*}{0.75} & \texttt{BC-base} & 302.8 & 38.2 & 18.1 & 66.3 & 23.7 & 17841.1 & 6581.2 & - & - & 258.9 & 39.6 & 23/45 \\ 
  & \texttt{BC-best} & 155.1 & 14.5 & 7.0 & 62.8 & 17.8 & 9657.0 & 600.8 & 9427.2 & 5600.4 & 63.2 & 84.1 & 35/45 \\ 
   & \texttt{CP-base} & 294.2 & - & 14.8 & - & - & - & 171.0 & - & - & 66.4 & 1.4 & 23/45 \\ 
   & \texttt{CP-best} & 191.5 & - & 10.7 & - & - & - & 154.2 & - & 8.7 & 0.8 & 1.3 & 31/45 \\ 
  & \texttt{MIX++} & 316.0 & 38.4 & 14.6 & 45.7 & 20.6 & 712.7 & - & - & - & - & - & 23/45 \\ 
   \hline
\end{tabular}
\endgroup
\end{table}

%IPCO figures
% \begin{figure}[tbp]
% \begin{center}
% \includegraphics[width=0.5\textwidth]{Runtime_new.png}%
% \includegraphics[width=0.5\textwidth]{Gap_new.png}
% \end{center}
% \caption{Runtimes and final optimality gaps for the quadratic bilevel covering problem under our different settings and the benchmark solver \MIX.}
% \label{figure:Runtime-Gap}
% \end{figure}

\paragraph{Comparison against alternative methods.}
For binary \IBNPs we proposed an alternative cutting-plane algorithm in Section~\ref{sec:intcut}. This algorithm can be implemented with both separation strategies (\texttt{G} and \texttt{O}). Moreover, the removal of redundant disjunctions and normalization strategies can be fine-tuned as well. 
%In the following, when referring to \texttt{BC-best} we refer to \texttt{IFG} combined with \texttt{RO} and \texttt{S1} for~\eqref{CG-SOCP}. When referring to \texttt{BC-base} we refer to \texttt{IO} combined with \texttt{RN} (no removal of disjunctions) and \texttt{S2} for~\eqref{CG-SOCP} (the setting which was also used in our earlier study in~\cite{Gaar-et-al:2022}).
In Figure~\ref{fig:method_comparison}, we compare the B\&C results with the two settings (\texttt{BC-best} versus \texttt{BC-base}) against the results obtained by the cutting-plane algorithm (\texttt{CP-best} which involves \texttt{G}, \texttt{RO}, and \texttt{S1} versus \texttt{CP-base} which involves \texttt{O}, \texttt{RN}, and \texttt{S2}) as well as a state-of-the-art \MIBLOP solver \MIX of Fischetti et al.~\cite{fischetti2017new,fischetti2018use}, which is able to solve the linearized version of our instances. Figure~\ref{fig:method_comparison} shows the ECDFs of the runtime and the final gaps at the end of the time limit. 
It can be seen that the \texttt{best} settings significantly improve their \texttt{base} counterparts. This is particularly pronounced for the B\&C algorithm, where the \texttt{base} setting solves only 55\% of instances to optimality, whereas its \texttt{best} counterpart increases this number to almost 90\%. Similar (but not so drastic) improvements are obtained for the cutting-plane method too. Finally, the overall best-performing approach is \texttt{BC-best} and the solver \MIX is also outperformed by both the cutting-plane algorithm and the B\&C.   

Tables~\ref{tab:method_comparison_cover} and~\ref{tab:method_comparison_MKP} provide additional insights into this comparison. For each of the five methods, we report the following average values: the runtime in seconds ($t$), the final gap (\texttt{Gap}), the final gap with respect to the best-known upper bound (\texttt{Gap*}), the root gap (\texttt{RGap}), the root gap with respect to the best-known upper bound (\texttt{RGap*}), the number of DCs separated at integer points (\texttt{nICut}), the number of DCs separated at fractional points (\texttt{nFCut}), the number of cuts where at least one redundant disjunctions was removed (\texttt{nRed}), the time needed to solve the follower problem ($t_F$), the additional time needed to separate a DC ($t_S$), the number of instances solved to optimality and the total number of instances considered in each row (\texttt{nSol}). 

%\red{Recall} that for each instance \texttt{Gap} is calculated as $100 (z^*-LB)/z^*$, where $z^*$ and $LB$ are the best-known objective-function value in the current setting and the lower bound, respectively. 
%In particular, 
The gaps are calculated as follows: 
\texttt{RGap} is calculated as $100 (z^*_R-LB_R)/z^*_R$, where $z^*_R$ and $LB_R$ are the best objective-function value and the lower bound at the end of the root node, respectively. In the $*$ counterparts of \texttt{Gap} and \texttt{RGap}, we use the best-known objective-function value of the instance over all the experiments described in this section, instead of $z^*$ and $z^*_R$. For the cutting-plane method, only a lower bound is available unless the instances is solved to optimality, thus we only provide \texttt{Gap*}.

%Finally, in column \texttt{nSol} we report the number of instances solved to optimality, followed by the total number of instances considered in each row.
In Table~\ref{tab:method_comparison_cover} each row presents average values over 10 instances with $n \in \{20,30,40,50\}$, and in Table~\ref{tab:method_comparison_MKP}, the instances are grouped according to the percentage of items that are controlled by the leader ($n_1/n \in \{0.5,0.75\}$). We observe that the removal of redundant disjunctions is particularly effective when fractional points are separated, and that for the cutting-plane method the redundant disjunctions are rarely detected. The latter can be explained by the fact that the problems solved to detect redundancy are more likely to be infeasible when considering local variable bounds. Moreover, we observe  speed-ups of orders of magnitude when using \texttt{G} instead of using \texttt{O}. In terms of final gaps, for the most difficult instances (namely those from QBCov with $n = 50$ and all instances from QBMKP), the best method is \texttt{BC-best}, providing final average gaps which are two to 15 times lower than the respective gaps of the competing methods.

%\paragraph{SOCP-based DCs with multiple disjunctions.}
\paragraph{Performance of the B\&C on instances with two linking constraints.}

\begin{figure}[tbp]
\caption{ECDFs reporting runtimes and final gaps for \texttt{BC-base} and \texttt{BC-best} over %140 (
123 binary instances with two linking constraints.
\label{fig:multidisj} }
\begin{center}

\includegraphics[width=0.5\textwidth]{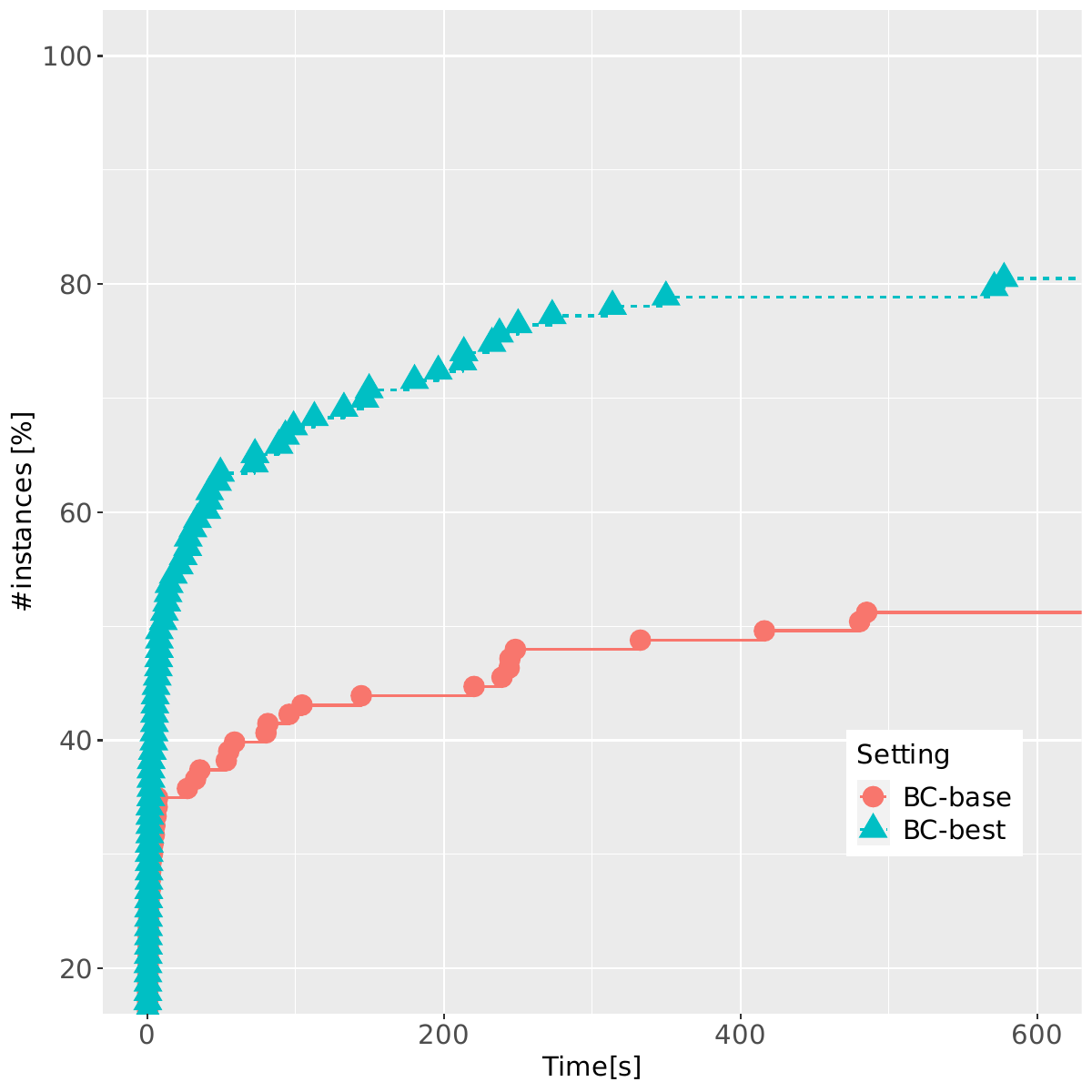}%
\includegraphics[width=0.5\textwidth]{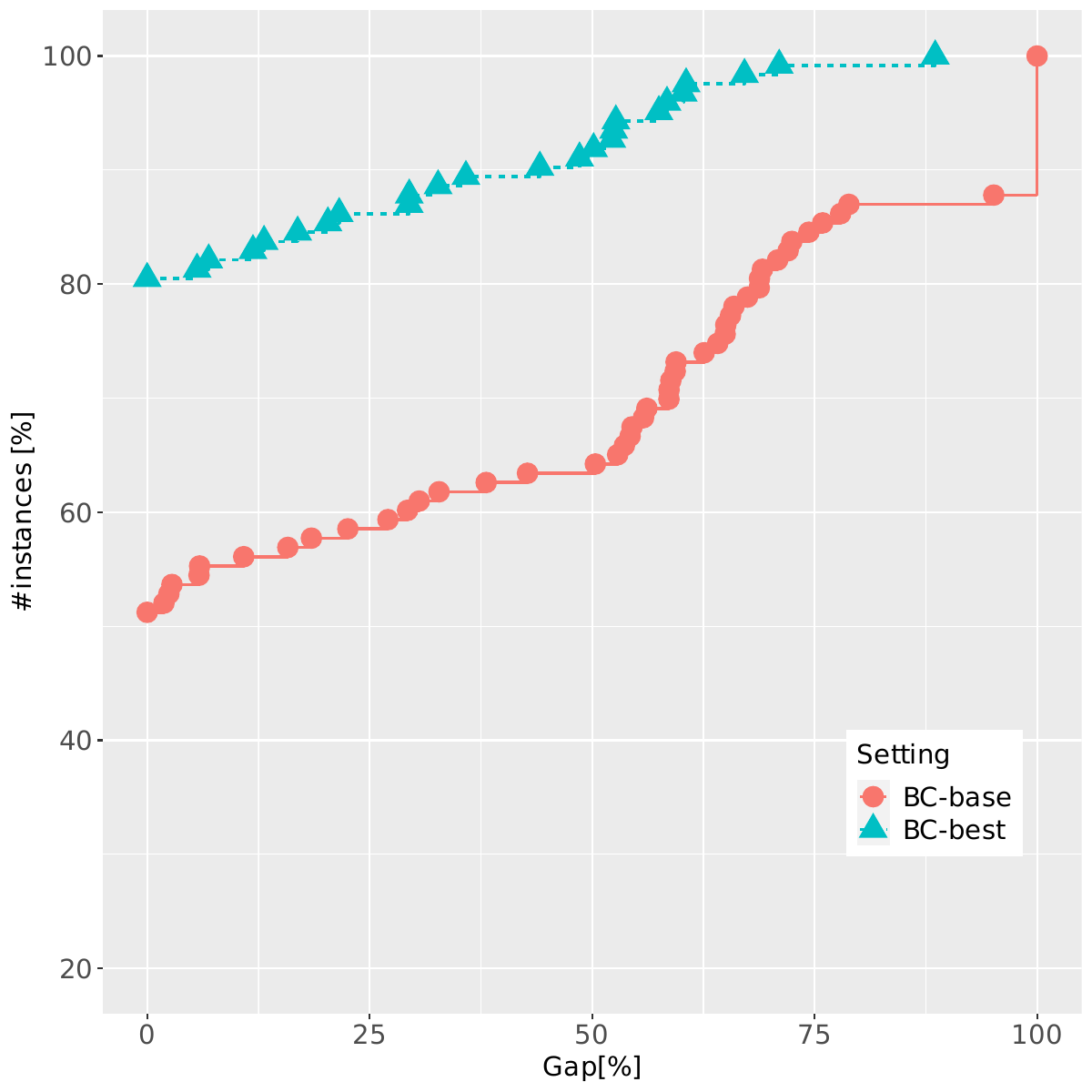}%

\end{center}
\end{figure}

We now turn our attention to the set of 140 binary instances with two linking constraints,  40 of them being from the benchmark set  QBCov and  100 from QBMKP. For these instances, 
%our DC separation procedure results in three disjunctions, 
\eqref{CG-SOCP} has three disjunctions,
one for the objective function, and one for each of the linking constraints. 
%\red{($m_2=2$)} \todo{should we just say two}. 
In order to have a fair comparison, out of 140 instances from this benchmark set, we only consider 123 instances for which at least one of the methods was either able to find a feasible solution, or to prove  infeasibility (for 17 instances from QBMKP, the feasibility status remains unknown). We again compare the settings \texttt{BC-best} and \texttt{BC-base} and report the corresponding ECDFs in Figure~\ref{fig:multidisj}. Also here, significant improvements in the performance can be achieved thanks to a proper combination of separation and disjunction removal strategies (around 50\% of instances are solved to optimality in the \texttt{BC-base} setting versus more than 80\% when \texttt{BC-best} is used instead). 
%We also notice that multiple disjunctions do negatively affect the performance of~\eqref{CG-SOCP}, but this effect remains moderate, as long as a relatively small number of disjunctions is considered. 
%
More detailed results for these instances are provided 
%in Appendix~\ref{sec:appenixA}, 
in Tables~\ref{tab:multidisj_covering} and~\ref{tab:multidisj_knapsack}. It is clear that under \texttt{BC-best} we are able to decrease the solution times for each group of instances and to detect many redundant disjunctions.

\begin{table}[tbp]
\centering \setlength{\tabcolsep}{3pt}
\caption{
Average results of \texttt{BC-base} and \texttt{BC-best} over 40 %(40) 
QBCov instances with two linking constraints.
\label{tab:multidisj_covering} 
}
\begingroup\scriptsize
\begin{tabular}{@{}rrrrrrrrrrrrrr@{}}
  \hline
  \multicolumn{1}{c}{$n$} & \multicolumn{1}{c}{\texttt{Setting}} & \multicolumn{1}{c}{$t$} & \multicolumn{1}{c}{\texttt{Gap}} & \multicolumn{1}{c}{\texttt{Gap*}} & \multicolumn{1}{c}{\texttt{RGap}}  & \multicolumn{1}{c}{\texttt{RGap*}} & \multicolumn{1}{c}{\texttt{nNode}} & \multicolumn{1}{c}{\texttt{nICut}} & \multicolumn{1}{c}{\texttt{nFCut}} & \multicolumn{1}{c}{\texttt{nRed}} & \multicolumn{1}{c}{$t_F$} & \multicolumn{1}{c}{$t_S$} & \multicolumn{1}{c}{\texttt{nSol}} \\ \hline
\multirow{2}[1]{*}{20} & \texttt{BC-base} & 2.0 & 0.0 & 0.0 & 90.0 & 26.3 & 291.1 & 108.9 & - & - & 1.2 & 0.7 & 10/10 \\ 
   & \texttt{BC-best} & 2.0 & 0.0 & 0.0 & 90.0 & 25.7 & 174.6 & 26.1 & 90.6 & 139.4 & 0.8 & 0.7 & 10/10 \\ 
  \multirow{2}[1]{*}{30} & \texttt{BC-base} & 225.0 & 14.8 & 10.5 & 90.0 & 27.4 & 3919.3 & 1643.0 & - & - & 211.0 & 13.3 & 7/10 \\ 
   & \texttt{BC-best} & 42.5 & 0.0 & 0.0 & 73.6 & 24.0 & 5280.2 & 216.1 & 3425.3 & 5379.6 & 19.8 & 20.7 & 10/10 \\ 
  \multirow{2}[1]{*}{40} & \texttt{BC-base} & 339.3 & 32.0 & 12.7 & 73.3 & 21.9 & 4469.1 & 1530.4 & - & - & 321.7 & 16.8 & 5/10 \\ 
   & \texttt{BC-best} & 102.0 & 0.0 & 0.0 & 80.0 & 21.3 & 8361.7 & 575.6 & 5255.2 & 6928.8 & 44.6 & 54.0 & 10/10 \\ 
  \multirow{2}[1]{*}{50} & \texttt{BC-base} & 550.4 & 56.0 & 36.9 & 100.0 & 40.3 & 3299.2 & 1218.1 & - & - & 532.7 & 17.0 & 1/10 \\ 
   & \texttt{BC-best} & 364.0 & 26.1 & 25.3 & 100.0 & 39.4 & 21184.3 & 1114.2 & 15539.7 & 17114.4 & 147.3 & 206.4 & 5/10 \\ 
   \hline
\end{tabular}
\endgroup
\end{table}

\begin{table}[tbp]
\centering \setlength{\tabcolsep}{3pt}
\caption{
Average results of \texttt{BC-base} and \texttt{BC-best} over 
%100.0 
83 QBMKP instances with two linking constraints.
\label{tab:multidisj_knapsack} 
}
\begingroup\scriptsize
\begin{tabular}{@{}rrrrrrrrrrrrrr@{}}
  \hline
  \multicolumn{1}{c}{$n_1/n$} & \multicolumn{1}{c}{\texttt{Setting}} & \multicolumn{1}{c}{$t$} & \multicolumn{1}{c}{\texttt{Gap}} & \multicolumn{1}{c}{\texttt{Gap*}} & \multicolumn{1}{c}{\texttt{RGap}}  & \multicolumn{1}{c}{\texttt{RGap*}} & \multicolumn{1}{c}{\texttt{nNode}} & \multicolumn{1}{c}{\texttt{nICut}} & \multicolumn{1}{c}{\texttt{nFCut}} & \multicolumn{1}{c}{\texttt{nRed}} & \multicolumn{1}{c}{$t_F$} & \multicolumn{1}{c}{$t_S$} & \multicolumn{1}{c}{\texttt{nSol}} \\ \hline
\multirow{2}[1]{*}{0.50} & \texttt{BC-base} & 417.8 & 40.1 & 19.8 & 86.0 & 26.0 & 3776.9 & 1387.8 & - & - & 404.8 & 12.3 & 14/33 \\ 
   & \texttt{BC-best} & 191.4 & 8.2 & 8.2 & 79.3 & 16.9 & 10128.7 & 524.2 & 9623.2 & 12297.6 & 82.1 & 102.8 & 26/33 \\ 
  \multirow{2}[1]{*}{0.75}  & \texttt{BC-base} & 295.6 & 27.8 & 14.4 & 67.9 & 17.7 & 15587.6 & 3987.9 & - & - & 246.7 & 46.2 & 26/50 \\ 
   & \texttt{BC-best} & 163.0 & 9.1 & 8.9 & 58.4 & 15.4 & 9742.1 & 385.6 & 8017.7 & 10365.4 & 71.2 & 86.4 & 38/50 \\ 
   \hline
\end{tabular}
\endgroup
\end{table}

\begin{figure}[tbp]
\caption{ECDFs reporting runtimes and final gaps for \texttt{BC-base} and \texttt{BC-best} over %100 (
97 integer instances with one linking constraint.
\label{fig:integer} }
\begin{center}

\includegraphics[width=0.5\textwidth]{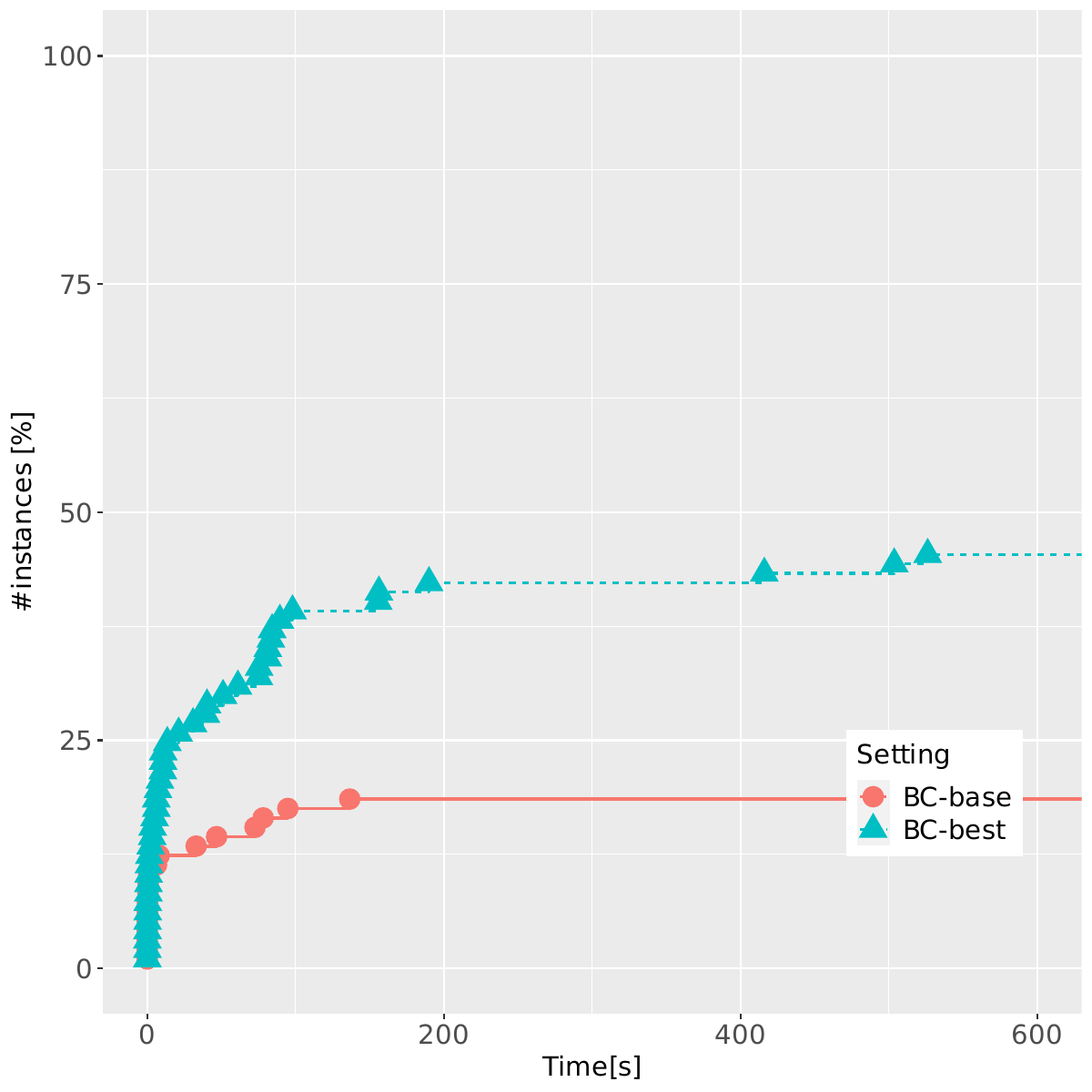}%
\includegraphics[width=0.5\textwidth]{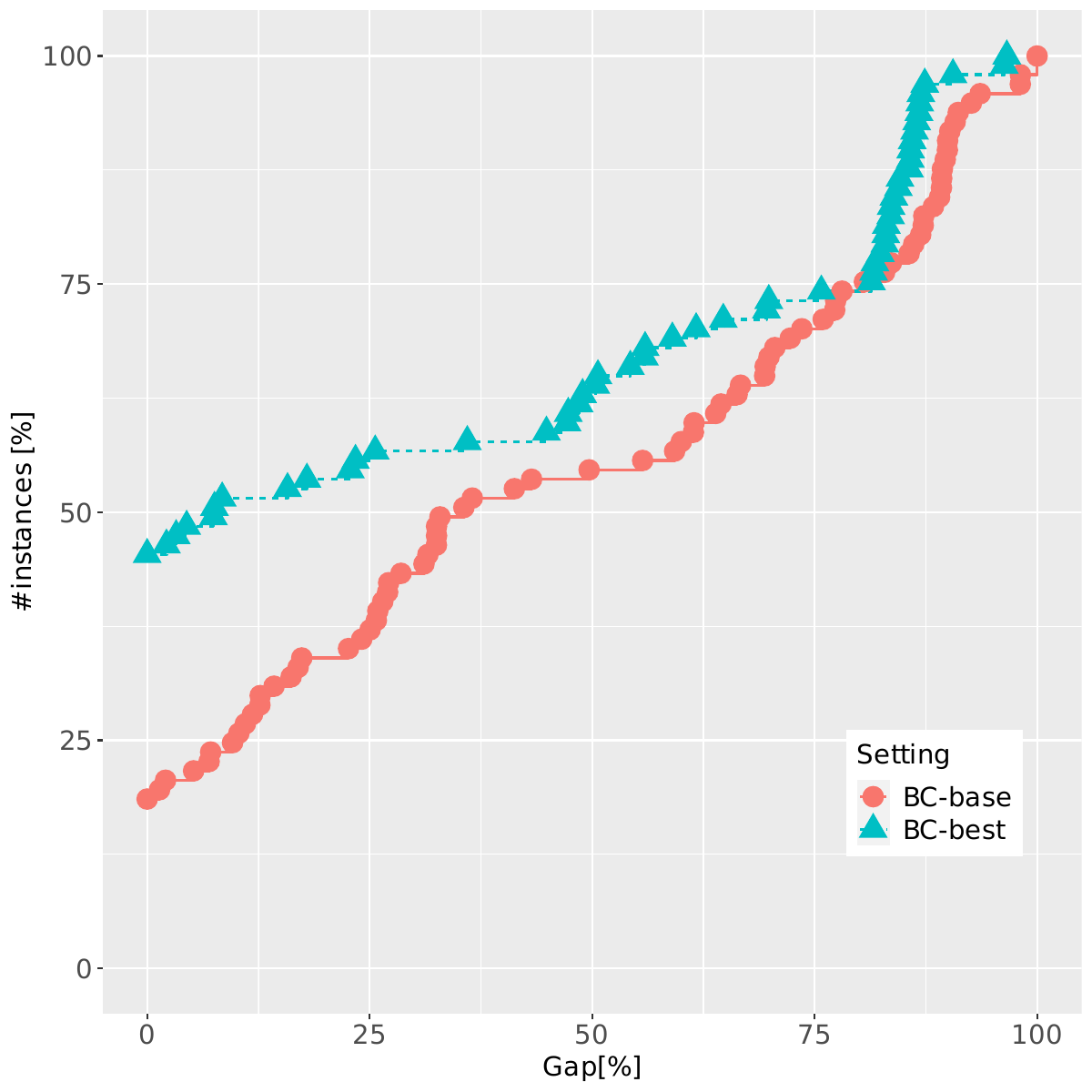}%
\end{center}
\end{figure}

\paragraph{Performance of the B\&C on instances with integer variables.}
Finally, we also consider 100 additional instances from the benchmark set QBMKP with integer variables. For 97 instances (for which we were able to either find a feasible solution, or to prove infeasibility)  the results comparing the settings \texttt{BC-best} and \texttt{BC-base} are summarized in Figure~\ref{fig:integer}. We observe that \IBNPs with integer variables are more difficult for our method than their binary counterparts. The improvements obtained using the \texttt{BC-best} setting are still significant (we double the number of instances solved to optimality within the time limit), however, more than 50\% of the instances from this benchmark set remain unsolved. These instances are the most difficult ones considered in this study, which can be explained by the much larger size of the search space. Based on the detailed results which are provided in Table~\ref{tab:integer}, we observe that
%For example, 
the number of explored branching nodes is orders of magnitude higher compared to the similar instances with binary variables, and the strength of DCs (in terms of the root bounds) is significantly weaker when integer variables are involved. 
%More detailed results are provided in 
%Appendix~\ref{sec:appenixA}, 

\begin{table}[tbp]
\centering  \setlength{\tabcolsep}{3pt}
\caption{Average results of \texttt{BC-base} and \texttt{BC-best} over 
%100 (97) 
97 integer QBMKP instances with one linking constraint.
\label{tab:integer} }
\begingroup\scriptsize
\begin{tabular}{@{}rrrrrrrrrrrrrr@{}}
  \hline
  \multicolumn{1}{c}{$n_1/n$} & \multicolumn{1}{c}{\texttt{Setting}} & \multicolumn{1}{c}{$t$} & \multicolumn{1}{c}{\texttt{Gap}} & \multicolumn{1}{c}{\texttt{Gap*}} & \multicolumn{1}{c}{\texttt{RGap}}  & \multicolumn{1}{c}{\texttt{RGap*}} & \multicolumn{1}{c}{\texttt{nNode}} & \multicolumn{1}{c}{\texttt{nICut}} & \multicolumn{1}{c}{\texttt{nFCut}} & \multicolumn{1}{c}{\texttt{nRed}} & \multicolumn{1}{c}{$t_F$} & \multicolumn{1}{c}{$t_S$} & \multicolumn{1}{c}{\texttt{nSol}} \\ \hline
\multirow{2}[1]{*}{0.50}  & \texttt{BC-base} & 517.6 & 64.6 & 53.3 & 96.0 & 55.7 & 43044.1 & 13409.9 & - & - & 411.0 & 98.6 & 7/47 \\ 
  & \texttt{BC-best} & 464.6 & 45.8 & 43.9 & 95.9 & 49.5 & 36864.1 & 959.6 & 35208.5 & 13368.0 & 180.0 & 255.5 & 11/47 \\ 
  \multirow{2}[1]{*}{0.75}  & \texttt{BC-base} & 471.1 & 25.9 & 21.6 & 86.4 & 23.8 & 123843.5 & 25398.6 & - & - & 246.5 & 204.2 & 11/50 \\ 
   & \texttt{BC-best} & 258.4 & 21.4 & 10.2 & 84.3 & 19.2 & 18200.1 & 1990.8 & 15386.6 & 5604.7 & 97.0 & 148.9 & 33/50 \\ 
   \hline
\end{tabular}
\endgroup
\end{table}

\section{Conclusions and outlook\label{sec:conclusions}}
In this article, we demonstrated that SOCP-based \DCs are an effective and promising methodology for solving a challenging family of discrete \BPs with a convex quadratic objective and linear constraints in the follower problem.   
Although \DCs have been employed with some success for several classes of MINLPs, their use and development for IBNPs is novel. 
The fact that we significantly outperform a state-of-the-art method for \MIBLOPs (after linearizing the nonlinear terms) indicates that further development of dedicated solution approaches for \IBNPs exploiting nonlinear (and in particular SOCP-based) techniques is a promising endeavour. 

There are still many open questions for future research. 
%From the computational  perspective, dealing with multiple linking constraints in the follower problem  requires an implementation of a SOCP-based separation procedure based on multi-disjunctions. 
%Our implementation could also be extended to deal with 
%second-order cone constraints at the leader. Moreover, 
The proposed \BC 
could be enhanced by bilevel-specific preprocessing, or 
bilevel-specific valid inequalities (as this has been done for \MIBLOPs in 
e.g., 
%\cite{fischetti2017new,fischetti2018use}
\cite{fischetti2017new,fischetti2018use}). 
Problem-specific strengthening 
inequalities could be used within disjunctions to obtain stronger DCs, and 
finally outer-approximation could be used as an alternative to SOCP-based 
separation. 
It also remains open to study problem generalizations involving (discrete) follower problems with (multiple) conic constraints. 

\begin{acknowledgements}
This research was funded in whole, or in part, by the Austrian Science Fund (FWF) [P 35160-N]. For the purpose of open access, the author has applied a CC BY public copyright licence to any Author Accepted Manuscript version arising from this submission.
It is also supported by the Johannes Kepler University Linz, Linz Institute of Technology (Project LIT-2019-7-YOU-211) and the JKU Business School. J. Lee was supported on this project by ESSEC and by ONR grant N00014-21-1-2135. 
\end{acknowledgements}
% ----------------------------------------------------------------
%\newpage
\bibliographystyle{splncs04}
\bibliography{disj}

\begin{thebibliography}{10}
\providecommand{\url}[1]{\texttt{#1}}
\providecommand{\urlprefix}{URL }
\providecommand{\doi}[1]{https://doi.org/#1}

\bibitem{agor2019feature}
Agor, J., {\"O}zalt{\i}n, O.Y.: Feature selection for classification models via
  bilevel optimization. Comput. Oper. Res.  \textbf{106},  156--168 (2019)

\bibitem{Atamturk-Narayanan:2010}
Atamt{\"{u}}rk, A., Narayanan, V.: Conic mixed-integer rounding cuts. Math.
  Prog.  \textbf{122}(1),  1--20 (2010)

\bibitem{Atamturk-Narayanan:2011}
Atamt{\"{u}}rk, A., Narayanan, V.: Lifting for conic mixed-integer programming.
  Math. Prog.  \textbf{126}(2),  351--363 (2011)

\bibitem{audet}
Audet, C., Haddad, J., Savard, G.: Disjunctive cuts for continuous linear
  bilevel programming. Optim. Lett.  \textbf{1}(3),  259--267 (2007)

\bibitem{BalasDP}
Balas, E.: Disjunctive programming. In: Hammer, P., Johnson, E., Korte, B.
  (eds.) Ann. of Disc. Math. 5: Discrete Optimization, pp. 3--51. North Holland
  (1979)

\bibitem{balas2018disjunctive}
Balas, E.: Disjunctive programming. Springer (2018)

\bibitem{Beck-et-al:2022}
Beck, Y., Ljubi\'c, I., Schmidt, M.: A survey on bilevel optimization under
  uncertainty (2022),
  \url{www.optimization-online.org/DB_HTML/2022/06/8963.html}

\bibitem{belotti2011disjunctive}
Belotti, P., Liberti, L., Lodi, A., Nannicini, G., Tramontani, A., et~al.:
  Disjunctive inequalities: applications and extensions. Wiley Encyclopedia of
  Operations Research and Management Science  \textbf{2},  1441--1450 (2011)

\bibitem{BrotcorneHM09}
Brotcorne, L., Hanafi, S., Mansi, R.: A dynamic programming algorithm for the
  bilevel knapsack problem. Oper. Res. Lett.  \textbf{37}(3),  215--218 (2009)

\bibitem{Brotcorne}
Brotcorne, L., Hanafi, S., Mansi, R.: One-level reformulation of the bilevel
  knapsack problem using dynamic programming. Discrete Optim.  \textbf{10}(1),
  1--10 (2013)

\bibitem{byeon2022benders}
Byeon, G., Van~Hentenryck, P.: Benders subproblem decomposition for bilevel
  problems with convex follower. INFORMS J. Comput.  (2022)

\bibitem{calvete2020algorithms}
Calvete, H.I., Gal{\'e}, C.: Algorithms for linear bilevel optimization. In:
  Bilevel Optimization, pp. 293--312. Springer (2020)

\bibitem{CerulliThesis}
Cerulli, M.: Bilevel optimization and applications. Ph.D. thesis, \'Ecole
  Poly., Paris (2021)

\bibitem{Cezik-Iyengar:2005}
{\c{C}}ezik, M.T., Iyengar, G.: Cuts for mixed 0-1 conic programming. Math.
  Prog.  \textbf{104}(1),  179--202 (2005)

\bibitem{D_Ambrosio_2020}
D'Ambrosio, C., Lee, J., Skipper, D., Thomopulos, D.: Handling separable
  non-convexities using disjunctive cuts. In: Ba{\"i}ou, M., Gendron, B.,
  G{\"u}nl{\"u}k, O., Mahjoub, A.R. (eds.) Proceedings of ISCO 2020. LNCS, vol.
  12176, pp. 102--114 (2020)

\bibitem{dempe2020bilevel}
Dempe, S., Zemkoho, A.: Bilevel optimization. Springer (2020)

\bibitem{FL2022}
Fampa, M., Lee, J.: An outer-approximation algorithm for maximum-entropy
  sampling. ISCO 2022, Lecture Notes in Computer Science, Springer; to appear
  (2022)

\bibitem{fischetti2017new}
Fischetti, M., Ljubi{\'c}, I., Monaci, M., Sinnl, M.: A new general-purpose
  algorithm for mixed-integer bilevel linear programs. Oper. Res.
  \textbf{65}(6),  1615--1637 (2017)

\bibitem{fischetti2018use}
Fischetti, M., Ljubi{\'c}, I., Monaci, M., Sinnl, M.: On the use of
  intersection cuts for bilevel optimization. Math. Prog.  \textbf{172}(1),
  77--103 (2018)

\bibitem{Fischetti2011}
Fischetti, M., Lodi, A., Tramontani, A.: On the separation of disjunctive cuts.
  Math. Prog.  \textbf{128}(1),  205--230 (2011)

\bibitem{fontaine2020population}
Fontaine, P., Crainic, T.G., Gendreau, M., Minner, S.: Population-based risk
  equilibration for the multimode hazmat transport network design problem. Eur.
  J. Oper. Res.  \textbf{284}(1),  188--200 (2020)

\bibitem{Gaar-et-al:2022}
Gaar, E., Lee, J., Ljubi\'c, I., Sinnl, M., Tan{\i}nm{\i}{\c{s}}, K.:
  {SOCP}-based disjunctive cuts for a class of integer nonlinear bilevel
  programs. In: Aardal, K., Sanit{\`{a}}, L. (eds.) Proceedings of IPCO 2022.
  LNCS, vol. 13265, pp. 262--276. Springer (2022)

\bibitem{gonzalez2021global}
Gonz{\'a}lez-D{\'\i}az, J., Gonz{\'a}lez-Rodr{\'\i}guez, B., Leal, M., Puerto,
  J.: Global optimization for bilevel portfolio design: Economic insights from
  the {D}ow {J}ones index. Omega  \textbf{102},  102353 (2021)

\bibitem{grimm2021optimal}
Grimm, V., Orlinskaya, G., Schewe, L., Schmidt, M., Z{\"o}ttl, G.: Optimal
  design of retailer-prosumer electricity tariffs using bilevel optimization.
  Omega  \textbf{102},  102327 (2021)

\bibitem{judice}
J{\'{u}}dice, J.J., Sherali, H.D., Ribeiro, I.M., Faustino, A.M.: A
  complementarity-based partitioning and disjunctive cut algorithm for
  mathematical programming problems with equilibrium constraints. J. Global
  Optim.  \textbf{36}(1),  89--114 (2006)

\bibitem{sac94}
Khuri, S., Baeck, T., Heitkoetter, J.: {SAC94 Suite: Collection of Multiple
  Knapsack Problems} (1994),
  \url{www.cs.cmu.edu/Groups/AI/areas/genetic/ga/test/sac/0.html}

\bibitem{kilincc2015two}
K{\i}l{\i}n{\c{c}}-Karzan, F., Y{\i}ld{\i}z, S.: Two-term disjunctions on the
  second-order cone. Math. Prog.  \textbf{154}(1),  463--491 (2015)

\bibitem{Kilinc-Karzan:2016}
K\i{}l\i{}n\c{c}-Karzan, F.: On minimal valid inequalities for mixed integer
  conic programs. Math. Oper. Res.  \textbf{41}(2),  477--510 (2016)

\bibitem{Kilinc-Karzan-Steffy:2016}
K\i{}l\i{}n\c{c}{-}Karzan, F., Steffy, D.E.: On sublinear inequalities for
  mixed integer conic programs. Math. Prog.  \textbf{159}(1--2),  585--605
  (2016)

\bibitem{kleinert2021outer}
Kleinert, T., Grimm, V., Schmidt, M.: Outer approximation for global
  optimization of mixed-integer quadratic bilevel problems. Math. Prog.
  \textbf{188}(2),  461--521 (2021)

\bibitem{kleinert2021survey}
Kleinert, T., Labb{\'e}, M., Ljubi{\'c}, I., Schmidt, M.: A survey on
  mixed-integer programming techniques in bilevel optimization. EURO J. Comput.
  Optim.  \textbf{9} (2021)

\bibitem{kleinert2020there}
Kleinert, T., Labb{\'e}, M., Plein, F., Schmidt, M.: There’s no free lunch:
  on the hardness of choosing a correct big-{M} in bilevel optimization. Oper.
  Res.  \textbf{68}(6),  1716--1721 (2020)

\bibitem{KleniatiAdjiman2014b}
Kleniati, P.M., Adjiman, C.S.: Branch-and-sandwich: a deterministic global
  optimization algorithm for optimistic bilevel programming problems. {P}art
  {II}: Convergence analysis and numerical results. J. Global Optim.
  \textbf{60}(3),  459--481 (2014)

\bibitem{kleniati2015generalization}
Kleniati, P.M., Adjiman, C.S.: A generalization of the branch-and-sandwich
  algorithm: from continuous to mixed-integer nonlinear bilevel problems.
  Comput. Chem. Eng.  \textbf{72},  373--386 (2015)

\bibitem{KleniatiAdjiman2014a}
Kleniati, P.M., Adjiman, C.S.: Branch-and-sandwich: a deterministic global
  optimization algorithm for optimistic bilevel programming problems. {P}art
  {I}: Theoretical development. J. Global Optim.  \textbf{60}(3),  425--458
  (2014)

\bibitem{labbe2016bilevel}
Labb{\'e}, M., Violin, A.: Bilevel programming and price setting problems. Ann.
  Oper. Res.  \textbf{240}(1),  141--169 (2016)

\bibitem{Lodi-et-al:2014}
Lodi, A., Ralphs, T.K., Woeginger, G.J.: Bilevel programming and the separation
  problem. Math. Prog.  \textbf{146}(1),  437--458 (2014)

\bibitem{Lodi-et-al:2020}
Lodi, A., Tanneau, M., Vielma, J.P.: Disjunctive cuts in mixed-integer conic
  optimization. Mathematical Programming pp. 1--49 (2022)

\bibitem{loridan1996weak}
Loridan, P., Morgan, J.: Weak via strong {S}tackelberg problem: {N}ew results.
  J. Global Optim.  \textbf{8}(3),  263--287 (1996)

\bibitem{lozano2017value}
Lozano, L., Smith, J.C.: A value-function-based exact approach for the bilevel
  mixed-integer programming problem. Oper. Res.  \textbf{65}(3),  768--786
  (2017)

\bibitem{mitsos2008global}
Mitsos, A., Lemonidis, P., Barton, P.I.: Global solution of bilevel programs
  with a nonconvex inner program. J. Global Optim.  \textbf{42}(4),  475--513
  (2008)

\bibitem{Modaresi:2016}
Modaresi, S.: Valid inequalities and reformulation techniques for mixed integer
  nonlinear programming. Ph.D. thesis, University of Pittsburgh (2016)

\bibitem{moore1990mixed}
Moore, J.T., Bard, J.F.: The mixed integer linear bilevel programming problem.
  Oper. Res.  \textbf{38}(5),  911--921 (1990)

\bibitem{mosek}
{MOSEK ApS}: MOSEK Fusion API for C++ manual. Version 9.2. (2021)

\bibitem{plein2021bilevel}
Plein, F., Th{\"u}rauf, J., Labb{\'e}, M., Schmidt, M.: Bilevel optimization
  approaches to decide the feasibility of bookings in the {E}uropean gas
  market. Math. Meth. O.R.  (2021)

\bibitem{MIQCP}
Saxena, A., Bonami, P., Lee, J.: Convex relaxations of non-convex mixed integer
  qua\-dr\-at\-ically constrained programs: {E}xtended formulations. Math.
  Prog.  \textbf{124},  383--411 (2010)

\bibitem{MIQCPproj}
Saxena, A., Bonami, P., Lee, J.: {C}onvex relaxations of non-convex mixed
  integer qua\-dr\-at\-ically constrained programs: {P}rojected formulations.
  Math. Prog.  \textbf{130},  359--413 (2010)

\bibitem{smith2020survey}
Smith, J.C., Song, Y.: A survey of network interdiction models and algorithms.
  Eur. J. Oper. Res.  \textbf{283}(3),  797--811 (2020)

\bibitem{Tahernejad2020}
Tahernejad, S., Ralphs, T., DeNegre, S.: A branch-and-cut algorithm for mixed
  integer bilevel linear optimization problems and its implementation. Math.
  Prog. Comp.  \textbf{12},  529--568 (2020)

\bibitem{wang2017watermelon}
Wang, L., Xu, P.: The watermelon algorithm for the bilevel integer linear
  programming problem. {SIAM} J. Optimiz.  \textbf{27}(3),  1403--1430 (2017)

\bibitem{weninger2020}
Weninger, D., Orlinskaya, G., Merkert, M.: An exact projection-based algorithm
  for bilevel mixed-integer problems with nonlinearities. J. Global Optim.
  (2022)

\bibitem{xu2014exact}
Xu, P., Wang, L.: An exact algorithm for the bilevel mixed integer linear
  programming problem under three simplifying assumptions. Comput. Oper. Res.
  \textbf{41},  309--318 (2014)

\bibitem{YueGZY19}
Yue, D., Gao, J., Zeng, B., You, F.: A projection-based reformulation and
  decomposition algorithm for global optimization of a class of mixed integer
  bilevel linear programs. J. Global Optim.  \textbf{73}(1),  27--57 (2019)

\bibitem{zenarosa2021exact}
Zenarosa, G.L., Prokopyev, O.A., Pasiliao, E.L.: On exact solution approaches
  for bilevel quadratic 0--1 knapsack problem. Ann. Oper. Res.
  \textbf{298}(1),  555--572 (2021)

\end{thebibliography}

%\clearpage
%\appendix

%\section{Computational Results, Continued} 
%\label{sec:appenixA}

% Authors must disclose all relationships or interests that 
% could have direct or potential influence or impart bias on 
% the work: 
%
% \section*{Conflict of interest}
%
% The authors declare that they have no conflict of interest.

% BibTeX users please use one of
%\bibliographystyle{spbasic}      % basic style, author-year citations
%\bibliographystyle{spmpsci}      % mathematics and physical sciences
%\bibliographystyle{spphys}       % APS-like style for physics
%\bibliography{}   % name your BibTeX data base

% Non-BibTeX users please use
% \begin{thebibliography}{}
% %
% % and use \bibitem to create references. Consult the Instructions
% % for authors for reference list style.
% %
% \bibitem{RefJ}
% % Format for Journal Reference
% Author, Article title, Journal, Volume, page numbers (year)
% % Format for books
% \bibitem{RefB}
% Author, Book title, page numbers. Publisher, place (year)
% % etc
% \end{thebibliography}

\end{document}
% end of file template.tex

\section{Table from IPCO paper}

The results of the \BC algorithm are presented in Table~\ref{tab:aggregatedResults}, as averages of the problems with the same size $n$. We provide the solution time $t$ (in seconds), the optimality gap \texttt{Gap} at the end of time limit (calculated as $100 (z^*-LB)/z^*$, where $z^*$ and $LB$ are the best objective-function value and the lower bound, respectively), the root gap \texttt{RGap} (calculated as $100 (z^*_R-LB_r)/z^*$, where $z^*_R$ and $LB_r$ are the best objective-function value and the lower bound at the end of the root node, respectively), the number of \BC nodes \texttt{nNode}, the numbers of integer \texttt{nICut} and fractional cuts \texttt{nFCut}, the time $t_F$ to solve the follower problems, the time $t_S$ to solve \eqref{CG-SOCP}, and the number of optimally solved instance \texttt{nSol} out of 10. \texttt{I-G} is the best performing setting in terms of solution time and final optimality gaps. Although \texttt{IF-O} and \texttt{IF-G} yield smaller trees, they are inefficient because of invoking the separation routine too often, which is computationally costly. \red{TODO Kubra: explain that now the fractional separation frequency is less compared to the algorithm we used for the IPCO paper. This helps the fractional separation and IF becomes better than I on the average, which was not the case before.} Therefore, they are not included in further comparisons.

% Table generated by Excel2LaTeX from sheet 'Summary (2)'
\begin{table}[h]
    \setlength{\tabcolsep}{4pt}
  \centering
  \caption{Results for the quadratic bilevel covering problem.}
    \begin{tabular}{crrrrrrrrrr}
    \hline
    \multicolumn{1}{c}{$n$} & \multicolumn{1}{c}{\texttt{Setting}} & \multicolumn{1}{c}{$t$} & \multicolumn{1}{c}{\texttt{Gap}} & \multicolumn{1}{c}{\texttt{RGap}} & \multicolumn{1}{c}{\texttt{nNode}} & \multicolumn{1}{c}{\texttt{nICut}} & \multicolumn{1}{c}{\texttt{nFCut}} & \multicolumn{1}{c}{$t_F$} & \multicolumn{1}{c}{$t_S$} & \multicolumn{1}{c}{\texttt{nSol}} \\
    \hline
    \multirow{4}[2]{*}{20} & \texttt{I-O}     & 1.6   & 0.0   & 42.9  & 158.9 & 44.0  & 0.0   & 0.7   & 0.4   & 10 \\
          & \texttt{IF-O}     & 7.0   & 0.0   & 46.1  & 82.8  & 13.5  & 151.3 & 5.0   & 1.5   & 10 \\
          & \texttt{I-G}     & 1.1   & 0.0   & 42.6  & 192.4 & 56.7  & 0.0   & 0.2   & 0.4   & 10 \\
          & \texttt{IF-G}     & 3.3   & 0.0   & 42.1  & 102.4 & 17.3  & 183.9 & 0.7   & 2.0   & 10 \\
    \hline
    \multirow{4}[2]{*}{30} & \texttt{I-O}     & 26.1  & 0.0   & 40.4  & 2480.0 & 325.5 & 0.0   & 22.3  & 2.2   & 10 \\
          & \texttt{IF-O}     & 246.5 & 9.6   & 45.9  & 522.6 & 24.9  & 2104.1 & 216.0 & 20.3  & 8 \\
          & \texttt{I-G}     & 2.7   & 0.0   & 48.6  & 1630.6 & 226.1 & 0.0   & 0.4   & 1.7   & 10 \\
          & \texttt{IF-G}      & 55.2  & 0.0   & 39.6  & 669.6 & 29.9  & 1631.7 & 5.9   & 40.5  & 10 \\
    \hline
    \multirow{4}[2]{*}{40} & \texttt{I-O}     & 262.0 & 3.6   & 70.4  & 9209.4 & 1308.8 & 0.0   & 233.5 & 18.6  & 8 \\
          & \texttt{IF-O}     & 439.9 & 35.7  & 66.8  & 391.3 & 30.1  & 1751.9 & 390.7 & 43.8  & 4 \\
          & \texttt{I-G}     & 82.3  & 0.0   & 67.9  & 14225.5 & 1379.1 & 0.0   & 4.2   & 47.1  & 10 \\
          & \texttt{IF-G}      & 387.1 & 6.1   & 64.0  & 1039.8 & 53.4  & 3783.1 & 22.0  & 331.4 & 8 \\
    \hline
    \multirow{4}[2]{*}{50} & \texttt{I-O}     & 537.6 & 46.3  & 72.5  & 10921.1 & 1553.6 & 0.0   & 458.4 & 67.5  & 2 \\
          & \texttt{IF-O}     & 600.0 & 71.6  & 72.7  & 156.3 & 24.8  & 1272.5 & 545.2 & 51.5  & 0 \\
          & \texttt{I-G}     & 417.9 & 20.2  & 71.8  & 93621.8 & 6928.2 & 0.0   & 17.6  & 102.5 & 4 \\
          & \texttt{IF-G}      & 519.8 & 40.5  & 72.8  & 2537.6 & 56.0  & 12548.1 & 45.4  & 244.9 & 3 \\
    \hline
    \end{tabular}%
  \label{tab:aggregatedResults}%
\end{table}%

\red{
New outline of the computations (discussed in Paris):
\begin{itemize}
    \item Fixing before base setting:
    \begin{itemize}
        \item turn rounding on
        \item MOSEK always primal
        \item nodebounds always 1, because they are needed for the integer anyway, and they do not make so much difference for binary instances
        \item \textcolor{green}{branching preference on x variables}
        %\\ \textcolor{green}{Run 1 bewlow: binary instances, 0=global/1 = local/
        %2 = global purgable/ 3 = local purgable, so 3 versions, 
        %setting like in the base setting with IFG
        %}  
    \end{itemize}
    \item[$\hookrightarrow$] \textcolor{blue}{Base} setting 
    \begin{itemize}
        \item 2-norm, remove disjunctive = 0, nodebounds = 1, keep previous = 0
    \end{itemize}
\item Do some comparisons with base setting and 
binary, one follower constraint instances, namely Covering from IPCO + new Multiple Knapsack instances:
\begin{itemize}
    \item Separation options (IG/IFG/IO/IFO)
    \\\quad Result: IFG is the best
    \item remove redundant disjunctions (0 = no removing/1= LP removing/2 = MIP removing/3 = MIP+bound removing) \textcolor{green}{Run ?: 4 versions with IFG}
    \item 1- or 2-norm \textcolor{green}{Run ?: 2 versions}
    \item Keep previous cuts \textcolor{green}{Run: 2 versions}
    \item[$\hookrightarrow$] Arrive at the \textcolor{blue}{default} setting
\end{itemize}
\item multi-disjunction instances: compare base vs default
\item integer instances: compare base vs default
\item final comparison: default vs best state-of-the-art, but only for binary (integer instances would have to be turned into binary instances to use state-of-the-art)
\end{itemize}
}

\def\usebb{} 
\let\usebb\undefined
\ifdefined\usebb
\subsection{A branch-and-bound algorithm  %for \eqref{bilevel} 
\label{sec:bb}}

The \BB used to solve \eqref{bilevel} follows the \BB for \MIBLOPs described in~\cite[Section 3]{fischetti2018use}. The \BB needs the auxiliary algorithm \texttt{refine} described in Algorithm~\ref{alg:refine}. For a given \HPR solution $(x^*,y^*)$, this algorithm returns the optimal solution for the restricted version of \eqref{bilevel} where $x_i=x^*_i$ for all $i \in I$. This solution is needed for the node-fathoming procedure of the \BB. 

\begin{algorithm}[h!tb]
\LinesNumbered
\SetKwInOut{Input}{Input}\SetKwInOut{Output}{Output}
\Input{An \HPR\ solution $(x^*, y^*)$} 
\Output{A bilevel-feasible solution $(\xh,\yy)$ with $\xh_i=x^*_i$ for all $i \in I$} 
\BlankLine 
{	                                                                    
	Solve the follower problem for $x=x^*$ to compute $\Phi(x^*)$\; \label{Step:Phi}
%%	\lIf{$\Phi(x^*) = -\infty$} {\Return ``MIBLP is INFEASIBLE'' } \label{Step:inf1} 
	temporarily add the following constraints to \HPR: $x_i=x^*_i$ for all $i\in I$, and $q(y) \le \Phi(x^*)$\; \label{Step:fix} 
	solve the modified \HPR\; \label{Step:solve_rHPR}
	\lIf{the modified \HPR\ is unbounded} {\Return ``The problem is unbounded''} \label{Step:unb}
	Let $(\xh,\yy)$ be the optimal solution found, and \Return $(\xh, \yy)$ \label{Step:rHPR}
}
% \Return{WHAT\;}
\caption{\texttt{refine}\label{alg:refine}}
\end{algorithm} 

The \BB is based on the \ccrHPR and proceeds by using standard branching on the integer-constrained variables $x_i$, $i\in I$ and $y_j$, $j\in J$ that are fractional for the relaxation at a given node in the \BB tree. If the relaxation at a given node is infeasible, or the objective value of the relaxation is worse than the objective value of the incumbent, the node is pruned as usual. However, different to a standard \BB algorithm, once a node-solution fulfills all integrality requirements (i.e., it is a feasible solution to the \HPR), we do not directly have a new incumbent but call the procedure \texttt{BilevelNodeProcessing} instead, which is described Algorithm~\ref{alg:bb}.

\begin{algorithm}
%\DontPrintSemicolon
\LinesNumbered
\SetKwInOut{Input}{Input}\SetKwInOut{Output}{Output}
\Input{An \HPR\ solution $(x^*, y^*)$} 
\Output{updated incumbent \emph{or} node is pruned \emph{or} two new children nodes} 
\BlankLine 
%\tcc{} %Basic B\&B algorithm for MIBLP} 	

			compute $\Phi(x^*)$ by solving the follower problem for $x=x^*$\;\label{Step:feas1bis}
			\If {$q(y^*) \le \Phi(x^*)$ }
			{update the incumbent\;
			\Return
			} 
%			\eIf{if $Phi(x*) = -\infty$} \texttt{return INFEASIBLE}\; \label{Step:inf1}
	
	  \eIf {all variables $x_i$, $i \in I$ are fixed by branching \label{Step:ref0} }
				{ call Algorithm~\ref{alg:refine} with $(x^*,y^*)$\; \label{Step:ref}
					possibly update the incumbent with the resulting solution $(\xh,\yy)$, if any\;
					prune the current node \label{Step:ref2}
				}
		 		{branch on any $x_i$, $i \in I$ not fixed by branching yet}

\caption{\texttt{BilevelNodeProcessing}}\label{alg:bb}
\end{algorithm}  

In this procedure, we first check if the node-solution $(x^*,y^*)$ is feasible for constraint \eqref{eq:inequValueFunction} by comparing the follower objective value obtained for $y^*$ against the optimal follower solution value for $x^*$. If the solution is feasible, we update the incumbent and prune the node. If the solution is not feasible, there are two possible cases: Either we have already fixed all integer variables $x$ by branching or not. In the latter case, we continue with branching on a non-fixed integer variable (even though the variable is already integer in the relaxation). In case we have fixed all integer variables $x$ by branching we call \texttt{refine} to find the best feasible solution for the integer-part of $x^*$. If the obtained solution is better than the incumbent, we update the incumbent. In any case, after doing this, we can prune the node.

We apply the separation routine described in Section~\ref{sec:chooseyhat} to any relaxation solution $(x^*,y^*)$ we encounter during the \BB. We note that in theory, the \DC separation for any integer $(x^*,y^*)$ within a standard \BB using the \ccrHPR would already be enough to ensure a correct algorithm for solving problem \eqref{bilevel}, but in practice, these cuts could fail to cut off a bilevel-infeasible solution due to numerical issues.
\fi

\iffalse
\subsection{Preliminaries}

We need the following assumptions for finite convergence to an optimal solution of the presented algorithms. These assumptions are standard assumptions for convergence of \BB algorithms for mixed-integer nonlinear programming (see, e.g.,~\cite{})

%nd standard assumptions for dealing with issues which can occur due to the bilevel nature of \eqref{bilevel}
%(see, e.g.,~\cite{fischetti2017new,fischetti2018use}).

%Assumption~\ref{as:bound1} is also needed in this context. It ensures that the number of \BB nodes is finite, as the branching in the presented \BB is only done on integer variables, and each branching reduces the domain of these variables. 

\begin{Assumption}\label{as:bound2}
All variables occurring in the follower problem are bounded in the follower problem.
\end{Assumption}
\todo[inline]{This is now already included in Assumption~\eqref{as:bound1}. Should we exclude it again? But actually we need that to obtain the optimal $\hat{y}$ also in the case of solving the follower problem to obtain a $\hat{y}$, right? So I think we already need this before.}

This ensures that the follower problem can be solved by a \BB in a finite number of iterations. Note that by Assumption~\ref{as:bound1} we also have boundedness of the variables of the \HPR, thus also the \HPR can be solved with a finite number of iterations by a \BB.

\begin{Assumption}\label{as:diff}
\todo[inline]{differentiability conditions etc for solvability of HPR and follower, it is probably now already dealt before in Assumption2}

\end{Assumption}

%\begin{Assumption}\label{as:nocont}
%No continuous leader variables are occurring in $Ax+By\geq f$.
%\end{Assumption}

\fi